\newcommand{\rmH}{\mathrm{H}}
\newcommand{\rmL}{\mathrm{L}}
\newcommand{\bbC}{\mathbb{C}}
\newcommand{\bbN}{\mathbb{N}}
\newcommand{\bbR}{\mathbb{R}}
\newcommand{\bbZ}{\mathbb{Z}}
\newcommand{\rmins}[1]{\ \mathrm{#1}\ }
\newcommand{\ts}{\textstyle}
\newcommand{\myand}{\quad \mathrm{and} \quad }
\newcommand{\cleq}{\preccurlyeq}
\newcommand{\myP}{P}
\newcommand{\mys}{\sharp}
\newcommand{\myss}{${}^\sharp$ }
\DeclareMathOperator{\rmd}{d}
\DeclareMathOperator{\id}{id}
\DeclareMathOperator{\grad}{grad}
\DeclareMathOperator{\curl}{curl}
\DeclareMathOperator{\Div}{div}
\renewcommand{\div}{\Div}
\newtheorem{proposition}{Proposition}
\newtheorem{lemma}[proposition]{Lemma}
\theoremstyle{definition}\newtheorem{definition}{Definition}
\theoremstyle{remark}\newtheorem{remark}{Remark}
\numberwithin{equation}{section}
\begin{document}
\title{On variational eigenvalue approximation\\
of semidefinite operators}
\author{Snorre H. Christiansen, Ragnar Winther}

\date{}

\maketitle

\begin{abstract}
Eigenvalue problems for semidefinite operators with infinite dimensional kernels appear for instance in electromagnetics. Variational discretizations with edge elements have long been analyzed in terms of a discrete compactness property. As an alternative, we show here how the abstract theory can be developed in terms of a geometric property called the vanishing gap condition. This condition is shown to be equivalent to eigenvalue convergence and intermediate between two different discrete variants of Friedrichs estimates. Next we turn to a more practical means of checking these properties. We introduce a notion of compatible operator and show how the previous conditions are equivalent to the existence of such operators with various convergence properties. In particular the vanishing gap condition is shown to be equivalent to the existence of compatible operators satisfying an Aubin-Nitsche estimate. Finally we give examples demonstrating that the implications not shown to be equivalences, indeed are not.  
\end{abstract}

\begin{quote}
MSC classes : 65J10, 65N25, 65N30.
\end{quote}

\section{Introduction}
The basic eigenvalue problem in electromagnetics reads:
\begin{equation}
\curl \mu^{-1} \curl u = \omega^2 \epsilon u,
\end{equation}
for some vector field $u$ satisfying boundary conditions in a  bounded domain in Euclidean space. The matrix coefficients $\mu$ and $\epsilon$ characterize electromagnetic properties of the medium. The eigenvalue $\omega^2$ is expressed here in terms of the angular frequency $\omega$.
The operator on the left is semidefinite with an infinite dimensional kernel. The variational approximation of such eigenvalue problems reads: find $u$ and $\omega$ such that for all $u'$ there holds:
\begin{equation}\label{eq:maxwell}
\int \mu^{-1}\curl u \cdot \curl u' = \omega^2 \int\epsilon u \cdot u'.
\end{equation}
In Galerkin discretizations, $u$ is in some finite dimensional space $X_n$ and the above equation should hold for all $u'$ in $X_n$. One has an integer parameter $n$ and looks at convergence properties as $n$ increases.

The above eigenvalue problem is of the following general form: find $u\in X$ and $\lambda \in \bbR$ such that for all $u'\in X$:
\begin{equation}
a(u, u') = \lambda \langle u, u' \rangle,
\end{equation}
where on the right we have the scalar product $\langle \cdot, \cdot \rangle$ of a Hilbert space $O$ (typically an $\rmL^2$ space) and on the left a symmetric semipositive bilinear form $a$ on a Hilbert space $X$. In our setting, $X$ will be dense and continuously embedded in $O$, but not necessarily compactly so, so that the standard theory \cite{BabOsb91} does not directly apply. The important hypothesis is instead that the $O$-orthogonal complement of the kernel of $a$ in $X$ is compactly embedded in $O$. In the context of electromagnetics, such compactness results are included in the discussion of \cite{Cos90}.

The most successful Galerkin spaces for (\ref{eq:maxwell}) are those of \cite{Ned80} and the convergence of discrete eigenvalues has been shown to follow essentially from a \emph{discrete compactness} (DC) property of these spaces, as defined in \cite{Kik89}. This notion has been related to that of collectively compact operators \cite{MonDem01}. Under some assumptions, eigenvalue convergence is actually equivalent to discrete compactness \cite{CaoFerRaf00}.  Necessary and sufficient conditions for eigenvalue convergence are also obtained in \cite{BofBreGas97} via mixed formulations. Natural looking finite element spaces, which satisfy standard inf-sup conditions, equivalent to a \emph{discrete Friedrichs}  estimate (DF), but nevertheless yielding spurious eigenvalues, have been exhibited \cite{BofBreGas00}. For reviews see \cite{Hip02,Mon03,Bof10}.

Eigenpair convergence has long been expressed in terms of gaps between discrete and continuous eigenspaces. In \cite{Chr04MC} the analysis of some surface integral operators was based upon another type of gap, which, transposed to the above problem, concerns the distance from discrete divergence free vector fields to truly divergence free ones. In other words, one considers the gap, in its unsymmetrized form, between the discrete and continuous spaces spanned by the eigenvectors attached to non-zero eigenvalues, from the former to the latter. The general framework was further developed in \cite{BufChr03}, to the effect that discrete inf-sup conditions for non-coercive operators naturally followed from a \emph{vanishing gap} (VG) condition. A variant can be found in \cite{Chr02p}. In \cite{Buf05} this theory was applied to the source problem for (\ref{eq:maxwell}) in anisotropic media. Contrary to DC, VG has a clear visual interpretation, expressing a geometric property of abstract discrete Hodge decompositions.

That VG implies DC is immediate. The converse was noted in \cite{BufPer06}. Thus VG is, via the above cited results, equivalent to eigenvalue convergence.  Moreover DC implies DF, though \cite{BofBreGas00} shows that the converse is not true. However we shall show that VG follows from an optimal version of DF, referred to as ODF henceforth. While unimportant for eigenvalue convergence, the ODF, in the form of a negative norm estimate, was used in \cite{Chr05} (under the name uniform norm equivalence) to prove a discrete div-curl lemma. It was also remarked that a local version of the ODF is implied by the discrete div-curl lemma. A common underlying assumption is that we have approximating discrete kernels (ADK). In section \S \ref{sec:frg} we detail the relationships between ADK, DF, DC (several variants), VG and ODF. We also show how to deduce eigenvalue convergence from VG. While VG uses the norm of $X$ to compute the gap, we relate  ODF to a gap property in $O$. The results on ODF are new. Most of the other results should be considered known in principle, but the ordering and brevity of the arguments might be of interest. A summary is provided in diagram (\ref{eq:summary}).

The main tool used to prove DC and error estimates for finite element spaces is the construction of interpolation operators. The role of commuting diagrams they satisfy has been highlighted \cite{Bof00}. However the interpolators deduced from the canonical degrees of freedom of edge elements are not even defined on $\rmH^1$ for domains in $\bbR^3$. This has lead to great many technical hurdles, involving delicate regularity estimates. Commuting projectors that are \emph{tame}, in the sense of being uniformly bounded $\rmL^2 \to \rmL^2$, were proposed in \cite{Sch08}. Another construction combining standard interpolation with a smoothing operator, obtained by cut-off and convolution on reference macro-elements, was introduced in \cite{Chr07NM}. As pointed out in \cite{ArnFalWin06}, for flat domains, quasi-uniform grids and natural boundary conditions, one can simplify the construction to use only smoothing by convolution on the entire physical domain. In \cite{ChrWin08} these restrictions were overcome by the introduction of a space dependent smoother. Such operators yield eigenvalue convergence quite easily, as well as ODF. It should also be remarked that eigenvalue convergence for the discretization of the Hodge-Laplacian by Whitney forms had been obtained much earlier in \cite{DodPat76}. For a review of the connection between finite elements and Hodge theory, see \cite{ArnFalWin06,ArnFalWin10}.

Reciprocally one would like to know if DF, VG or ODF imply the existence of commuting projections with enough boundedness, since this will indicate possible strategies for convergence proofs in more concrete settings. For instance, in \S 4 of \cite{BofBreGas97}, it is shown that eigenvalue convergence for mixed formulations is equivalent to the existence of a Fortin operator, converging in a certain operator norm. In \S 3.3 of \cite{ArnFalWin10} it is shown, in the context of Hilbert complexes, that DF is equivalent to the existence of commuting projections that are uniformly bounded in energy norm. We introduce here a notion of \emph{compatible operator} (CO) which contains both Fortin operators and commuting projections.  With this notion at hand, DF is equivalent to the existence of energy bounded COs, VG is equivalent to the existence of COs satisfying an Aubin-Nitsche estimate, and ODF is equivalent to the existence of tame COs. The equivalences of DF, VG and ODF to various properties of compatible operators are detailed in \S \ref{sec:projections}.

In \S \ref{sec:add} we give some complementary results. \S \ref{sec:hodge} shows how tame compatible operators appear in the context of differential complexes. \S \ref{sec:eqfor} studies what happens when the bilinear forms are replaced by equivalent ones, extending a result proved in \cite{CaoFerRaf00}. \S \ref{sec:strict} considers the optimality of proved implications. We show how to construct approximating subspaces satisfying ADK but not  DF. We also show how one can construct spaces satisfying DF but not DC, and spaces satisfying DC but not ODF. Thus all the major proved implications that were not proved to be equivalences, are proved not to be. These results are new in this abstract form, but as already mentioned, \cite{BofBreGas00} gives a concrete counter-example of spaces satisfying DC but not DF.

\section{Setting\label{sec:setting}}

\subsection{Continuous spaces and operators}

Let $O$ be an infinite dimensional separable real Hilbert space with scalar product $\langle \cdot,
\cdot \rangle$. Let $X$ be a dense subspace which is itself a Hilbert
space with continuous inclusion $X \to O$. We suppose that we have a continuous symmetric
bilinear form $a$ on $X$ satisfying:
\begin{equation}
\forall u \in X \quad a(u,u) \geq 0.
\end{equation}
Moreover we suppose that the bilinear form $\langle \cdot, \cdot \rangle + a$ is
coercive on $X$ so that we may take it to be its scalar product. The
corresponding norm on $X$ is denoted $\| \cdot \|$, whereas the
natural one on $O$ is denoted $| \cdot | $. Thus:
\begin{align}
|u|^2 &= \langle u, u \rangle,\\
\label{eq:scalpx}
\| u\|^2 &= \langle u, u \rangle + a(u,u).
\end{align} 

Define:
\begin{align}
W&= \{ u \in X \ : \ \forall u' \in X \quad a(u,u')= 0 \},\\
V&= \{u \in X \ : \ \forall w \in W \quad \langle u , w \rangle = 0 \}, 
\end{align}
so that we have a direct sum decomposition into closed subspaces:
\begin{equation}\label{eq:splitx}
X= V \oplus W,
\end{equation}
which are orthogonal with respect to both $\langle \cdot, \cdot \rangle $ and $a$. 

Since $a$ is semi-definite we have:
\begin{equation}
W= \{ u \in X \ : \ a(u,u)= 0 \}.
\end{equation}
We do \emph{not} require $W$ to be finite-dimensional. Remark that $W$ is closed also in $O$. Let $\overline V$ be the
closure of $V$ in $O$ and remark:
\begin{equation}
\overline V = \{u \in O \ : \ \forall w \in W \quad \langle u , w \rangle = 0 \},
\end{equation}
so that we have an orthogonal splitting of $O$:
\begin{equation}
O = \overline V \oplus W.
\end{equation}
We also have:
\begin{equation}\label{eq:intersect}
V = \overline V \cap X.
\end{equation}

We let $P$ be the projector in $O$ with range $\overline V$ and kernel
$W$. It is orthogonal in $O$. It maps $V$ to $V$ so it may also be regarded as a continuous
projector in $X$. Since the splitting (\ref{eq:splitx}) is orthogonal with respect to the scalar product on $X$ defined by (\ref{eq:scalpx}), $P$ is an orthogonal projector also in $X$.

\emph{We suppose that the injection $V \to O$ is compact} (when $V$ inherits
the norm topology of $X$). It implies the Friedrich inequality, namely that there is $C > 0$ such that:
\begin{equation}\label{eq:frie}
\forall v \in V \quad |v|^2 \leq C a(v,v).
\end{equation} 
In particular, on $V$, $a$ is a scalar product whose associated norm is equivalent to the one inherited from $X$, defined by (\ref{eq:scalpx}). In $X$, $P$ can be characterized by the property that for $u\in X$, $Pu \in V$ solves:
\begin{equation}
a(Pu,v) = a(u,v),
\end{equation}
for all $v \in V$. This holds then for all $v \in X$. We will also frequently use the identity, for $u,v \in X$:
\begin{equation}\label{eq:app}
a(Pu, Pv)=a(u,v).
\end{equation}

\begin{remark} \label{rem:exoneandtwo}  Let $S$ be a bounded contractible Lipschitz domain in $\bbR^3$ with outward pointing normal $\nu$ on $\partial S$. Equation (\ref{eq:maxwell}) leads to the following functional frameworks, depending on which boundary conditions are used. For simplicity, we take $\epsilon$ and $\mu$ to be scalar Lipschitz functions on $S$ that are bounded below, above zero. 
\begin{itemize} 
\item 
Define:
\begin{align}
O & = \rmL^2(S) \otimes \bbR^3, \\
X & = \{ u \in O \ : \ \curl u \in O \},\\
a(u,u') & = \ts \int \mu^{-1} \curl u \cdot \curl u',\\
\langle u , u' \rangle & = \ts \int \epsilon u \cdot u'.
\end{align}
Then we have:
\begin{align}
W& = \grad \rmH^1 (S),\\
\overline V &= \{ u \in O \  : \ \div \epsilon u = 0 \rmins{and} u \cdot \nu = 0 \}.
\end{align}
 
\item As a variant, replace in the above definitions:
\begin{equation}  
X  = \{ u \in O \ : \ \curl u \in O \rmins{and}  u \times \nu = 0 \}.
\end{equation}
Then we get:
\begin{align}
W&= \grad \rmH^1_0(S),\\
\overline V &= \{ u \in O \  : \ \div \epsilon u = 0 \}.
\end{align}
\end{itemize}
In both examples, compactness of $V \to O$ is guaranteed by results in \cite{Cos90}. If the topology of $S$ is more complicated, $W$ can contain, in addition to gradients, a non-trivial albeit finite dimensional, space of harmonic vectorfields. The characterization of $\overline V$ should then be modified accordingly.

For the case of more general coefficients $\epsilon$ and $\mu$, see \S \ref{sec:eqfor}.
\end{remark}

The dual of $X$ with $O$ as pivot space is denoted $X'$. The duality pairing on $X' \times X$ is thus denoted $\langle \cdot, \cdot \rangle$. We let $V'$ denote the subspace of $X'$ consisting of elements $u$ such that $\langle u, v \rangle = 0$ for all $v \in W$. 

Let $K: X' \to X$ be the continuous operator defined as follows. For all $u\in X'$, $Ku
\in V$ satisfies:
\begin{equation}\label{eq:kudef}
\forall v\in V \quad a(Ku, v) = \langle u, v \rangle.
\end{equation}
If $u\in V'$, equation  (\ref{eq:kudef}) holds for all $v\in X$. One sees that $K$ is identically $0$ on $W$ and induces an isomorphism $V' \to V$.

Since $V \to O$ is compact, $K : X' \to O$ is compact. As an operator $O \to O$, $K$ is selfadjoint. We deduce that $K$ is compact $ O \to X$ and a fortiori $O \to O$. We are interested in the following eigenvalue problem. Find $u \in X$
and $\lambda \in \bbR$ such that:
\begin{equation}
\forall u'\in X \quad a(u,u')= \lambda \langle u, u' \rangle.
\end{equation}
When $W \neq 0$ it is the eigenspace associated with the eigenvalue
$0$. A positive $\lambda$ is an eigenvalue of $a$ iff $1/\lambda$ is
a positive eigenvalue of $K$; the eigenspaces are the same. The non-zero eigenvalues of $a$ form an increasing positive sequence
diverging to infinity, each eigenspace being a finite
dimensional subspace of $V$. The direct sum of these eigenspaces is dense in
$V$.

\subsection{Discretization}

Given the above setting, consider a sequence $(X_n)$ of finite dimensional subspaces of $X$. We will later formulate conditions to the effect that elements $u$ of $X$ can be approximated by sequences $(u_n)$ such that $u_n \in X_n$. In the mean-time we state some algebraic definitions.

We consider the following discrete eigenproblems.
Find $u \in X_n$
and $\lambda \in \bbR$ such that:
\begin{equation}\label{eq:disceig}
\forall u'\in X_n \quad a(u,u')= \lambda \langle u, u' \rangle.
\end{equation}
We decompose:
\begin{align}
W_n&= \{ u \in X_n \ : \ \forall u' \in X_n \quad a(u,u')= 0 \} = X_n \cap W,\\
V_n&= \{u \in X_n \ : \ \forall w \in W_n \quad \langle u , w \rangle = 0 \}, 
\end{align}
so that:
\begin{equation}\label{eq:split}
X_n = V_n \oplus W_n.
\end{equation} 
Thus $W_n\subseteq W$ but it's crucial, for the points we want to make, that $V_n$ need not be a subspace of $V$.

Notice that $W_n$ is the $0$ eigenspace and that for non-zero $v \in V_n$ we have:
\begin{equation}
a(v,v) > 0.
\end{equation}
Define $K_n$ as follows. For all $u \in X'$, $K_n u \in V_n$ satisfies:
\begin{equation}\label{eq:defkn}
\forall v \in V_n \quad a(K_n u, v) = \langle u, v \rangle.
\end{equation}
Define also $P_n : X \to V_n$ by, for all $u \in X$, $P_nu \in V_n$
satisfies:
\begin{equation}\label{eq:defpn} 
\forall v \in V_n \quad a(P_n u, v) = a(u, v),
\end{equation}
which then holds for all $v \in X_n$.

For $u \in V'$ we have, for all $v \in V_n$:
\begin{equation}
a(P_nKu,v)= a(Ku,v)= \langle u, v \rangle = a(K_n u,v).
\end{equation}
So that:
\begin{equation}\label{eq:knpnk}
K_n u = P_n K u.
\end{equation}
If, on the other hand $u \in W$, $K u =0$ so $P_nKu=0$, but $K_n u = 0$ iff $u$ is $O$-orthogonal to $V_n$. If $V_n \not \subseteq V$ there are elements in $W$ not orthogonal to $V_n$.\footnote{The (crucial) point that $P_n K$  and $K_n$ need not coincide on $W$ was overlooked in \cite{Chr09AWM}.}

The non-zero discrete eigenvalues of $a$ are the inverses of the non-zero eigenvalues
of $K_n$.

A minimal assumption for reasonable convergence properties is:
\begin{description}
\item[AS ] We say that $(X_n)$ are Approximating Subspaces of $X$ if:
\begin{equation}
\forall u \in X \quad \lim_{n \to \infty} \inf_{u' \in X_n} \| u -
u'\| = 0.
\end{equation}
\end{description}

\begin{proposition}\label{prop:asv}
If AS holds we also have:
\begin{align}
\forall u \in O & \quad \lim_{n \to \infty} \inf_{u' \in X_n} | u -
u'| = 0,\\
\forall v \in \overline V &\quad \lim_{n \to \infty} \inf_{v' \in V_n} | v -
v'| = 0,\\
\forall v \in V &\quad \lim_{n \to \infty} \inf_{v' \in V_n} \| v -
v'\| = 0.
\end{align}
\end{proposition}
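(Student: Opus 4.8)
The plan is to prove the three limits in order, deriving the second from the first, and the third from AS, in each case by elementary Hilbert-space geometry; no step should be hard.

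For the first limit I would use only density of $X$ in $O$ together with continuity of the inclusion $X \to O$. Given $u \in O$ and $\varepsilon > 0$, pick $\tilde u \in X$ with $|u - \tilde u| < \varepsilon/2$; with $C > 0$ such that $|x| \le C\|x\|$ on $X$, AS provides, for $n$ large, some $u' \in X_n$ with $\|\tilde u - u'\| < \varepsilon/(2C)$, so that $|u - u'| \le |u - \tilde u| + C\|\tilde u - u'\| < \varepsilon$; hence $\inf_{u' \in X_n} |u - u'| \to 0$.

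For the second limit, given $v \in \overline V$, I would take (by the first limit) $u_n \in X_n$ with $|v - u_n| \to 0$ and split $u_n = v_n + w_n$ as in (\ref{eq:split}), with $v_n \in V_n$ and $w_n \in W_n \subseteq W$. The key point is that $v - v_n$ is $O$-orthogonal to $w_n$: indeed $\langle v, w_n \rangle = 0$ since $v \in \overline V$ and $w_n \in W$, and $\langle v_n, w_n \rangle = 0$ by the definition of $V_n$. Since $v - u_n = (v - v_n) - w_n$, the Pythagorean identity gives $|v - v_n|^2 = |v - u_n|^2 - |w_n|^2 \le |v - u_n|^2 \to 0$, so $\inf_{v' \in V_n} |v - v'| \to 0$. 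For the third limit I would argue identically but in the norm $\|\cdot\|$: given $v \in V$, AS yields $u_n \in X_n$ with $\|v - u_n\| \to 0$; since $a(\,\cdot\,, w_n) = 0$ for $w_n \in W_n \subseteq W$, the splitting $X_n = V_n \oplus W_n$ is orthogonal for the $X$-scalar product $\langle\cdot,\cdot\rangle + a$ as well, and likewise $v$ is $X$-orthogonal to $w_n$ (using $v \in V$); decomposing $u_n = v_n + w_n$ again and using $v - u_n = (v - v_n) - w_n$, the same Pythagorean computation gives $\|v - v_n\|^2 = \|v - u_n\|^2 - \|w_n\|^2 \le \|v - u_n\|^2 \to 0$, hence $\inf_{v' \in V_n} \|v - v'\| \to 0$.

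I do not expect any genuine obstacle; the only point that needs care, and the reason the statement is not entirely trivial, is that $V_n$ is in general \emph{not} contained in $V$, so one cannot conclude by applying the projector $P$ (or $P_n$) to an approximant of $v$. The argument above sidesteps this by noting that the $V_n$-component of any element of $X_n$ is automatically orthogonal — simultaneously in the $O$- and in the $X$-inner product — to the $W_n$-part of the error, which lets $v_n$ inherit the approximation quality of $u_n$ without reference to $V$.
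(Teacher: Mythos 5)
Your proof is correct and follows essentially the same route as the paper's: the paper writes the approximant as $u_n - Q_nu_n$ where $Q_n$ is the $O$-orthogonal (equivalently, $X$-orthogonal) projection onto $W_n$, which is exactly your $v_n$, and uses that $Q_n$ annihilates $v$; your Pythagorean identity $|v-v_n|^2 = |v-u_n|^2 - |w_n|^2$ is the same estimate in slightly sharper form (the paper settles for the generous factor $2$ in the $X$-norm case). The key observation you stress at the end — that $V_n \not\subseteq V$ forces one to subtract off the $W_n$-component rather than invoke $P$ — is precisely the point of the paper's argument as well.
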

\begin{proof}
(i) The first property holds by density of $X$ in $O$.

(ii) If we denote by $Q_n$ the $O$-orthogonal projection onto $W_n$ and $v\in \overline V$ is approximated in the $O$ norm by a sequence $u_n \in X_n$ we have:
\begin{equation}
 u_n -Q_n u_n \in V_n,
\end{equation}
and, since $Q_n v$ is orthogonal to both $v$ and $V_n$:
\begin{equation}
| v -  (u_n -Q_n u_n) | \leq  | (v - u_n) - Q_n (v-u_n)| \leq  |v-u_n| \to 0.
\end{equation}

(iii) Define $Q_n$ as above. In $X$ it is a projection with norm $1$. Pick $v \in V$ and choose a sequence $u_n \in X_n$ converging to $v$ in $X$. Remark that $Q_n v$ is orthogonal to both $v$ and $V_n$, also in $X$. We get:
\begin{equation}\label{eq:quasioptxv}
\| v -  (u_n -Q_n u_n) \| \leq  \| (v - u_n) - Q_n (v-u_n)\| \leq  2 \|v-u_n\| \to 0.
\end{equation}
This concludes the proof.
\end{proof}
Statements of the form ``there exists $C>0$ such that for all $n$, certain quantities $f_n$ and $g_n$ satisfy $f_n \leq C g_n$'', will be abbreviated $f_n \cleq g_n$. Thus $C$ might depend on the choice of sequence $(X_n)$ but not on $n$.

\section{Friedrichs, gaps and eigenvalue convergence\label{sec:frg}}
In this section, and for the rest of this paper, we assume that the spaces $(X_n)$ have been chosen so that AS holds.

Recall that $V_n$ is in general not a subspace of $V$. Below we will relate convergence of variational discretizations, to how well elements of $V_n$ can be approximated by those of $V$. More generally one is interested in what properties of elements of $V$, the elements of $V_n$ share. 

For non-zero subspaces $U$ and $U'$ of $X$ one defines the gap $\delta(U,U')$ from $U$ to $U'$ by:
\begin{equation}
\delta(U,U')= \sup_{u\in U}\inf_{ u'\in U'}\| u - u'\|/\|u\|.
\end{equation}
In what follows gaps will be computed with respect to the norm of $X$, unless otherwise specified.

In the above setting we define:
\begin{definition}\begin{description}
\item[ADK] We say that we have Approximating Discrete Kernels if:
\begin{equation}\label{eq:approxkernel}
\forall w \in W \quad \lim_{n \to \infty} \inf_{w' \in W_n} | w -
w'| = 0.
\end{equation}
\item[DF] Discrete Friedrichs holds if there is $C >0$
  such that:
\begin{equation}
\forall n \ \forall v \in V_n \quad |v|^2 \leq C a(v,v).
\end{equation}
\item[DC] Discrete Compactness holds if, from any subsequence of elements $v_n \in V_n$ which is bounded in $X$, one can extract a subsequence converging strongly in $O$, to an element of $V$.
\item[VG] We say that Vanishing Gap holds if:
\begin{equation}
\delta(V_n,V) \to 0 \textrm{ when } n \to \infty.
\end{equation}

\item[ODF] We say that the Optimal Discrete Friedrichs inequality holds if for some $C>0$ we have:
\begin{equation}
 \forall n \ \forall v \in V_n \quad |v| \leq C | \myP v|.
\end{equation} 
\end{description}
\end{definition}

This section discusses first the relations between these conditions, and second their relation to the convergence of $K_n$ to $K$ in various senses. The first three conditions are studied in particular in \cite{Kik89,CaoFerRaf00}, see also \cite{Bof10} \S 19. There are many possible variants of DC and we discuss several below. For an often overlooked subtlety in the definition of DC, see Remark \ref{rem:dcsubs}. VG appeared in \cite{Chr04MC,BufChr03,Buf05}. ODF is a condition we introduce here, as a counterpart to boundedness in $O$ of certain operators (mimicking so called commuting projections and Fortin operators) that will be discussed later, in \S \ref{sec:projections}.  Notice that ODF implies DF, due to (\ref{eq:frie}) and (\ref{eq:app}). A summary of the proved implications is provided at the end of this section, in diagram (\ref{eq:summary}).

Even ADK, which we will see is the weakest of the above hypotheses, is quite restrictive, but arises naturally in discretizations of complexes of Hilbert spaces by subcomplexes as will be explained in \S \ref{sec:hodge}. For the examples of Remark \ref{rem:exoneandtwo} the most important finite element spaces are those of \cite{Ned80}. We will see that they have all the above properties, of which ODF is the strongest. 

Strictness of proved implications will be discussed in \S \ref{sec:strict} below.

\subsection{Internal relations} We first study the implications between these conditions.

\begin{lemma}\label{lem:noadk}
Suppose ADK does not hold. Then there is a subsequence of elements $v_n \in V_n$  such that:
\begin{itemize}
\item $v_n$ converges weakly in $X$ to a non-zero element of $W$.
\item $Pv_n$ converges strongly to $0$ in X.
\end{itemize}
\end{lemma}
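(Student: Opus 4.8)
Since ADK fails, by definition there exists $\epsilon_0 > 0$, a subsequence (still indexed by $n$), and elements $w \in W$ with $|w| = 1$ such that $\inf_{w' \in W_n} |w - w'| \geq \epsilon_0$ for all $n$ in the subsequence. The plan is to manufacture the desired sequence in $V_n$ from the error vector $w - Q_n w$, where $Q_n$ is the $O$-orthogonal projection onto $W_n$ (equivalently the orthogonal projection in $X$, since the splitting $X_n = V_n \oplus W_n$ is orthogonal for both inner products, as established for $V \oplus W$ and inherited discretely). First I would fix an approximating sequence $u_n \in X_n$ with $\|w - u_n\| \to 0$ (AS, applied to $w \in W \subset X$), and set $w_n := Q_n u_n \in W_n$ and $v_n := u_n - w_n \in V_n$. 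Then $w - v_n = (w - u_n) + w_n$, and since $w \in W$, $Q_n w$ is also in $W_n$; comparing, $v_n = u_n - Q_n u_n$, so $w - v_n = (w - u_n) - Q_n(w - u_n) + (w - Q_n w)$. The first two terms go to $0$ in $X$ (norm of $Q_n$ is $1$), so $v_n$ is bounded in $X$ and, in $O$, stays a bounded distance from $W$: indeed $|w - v_n| \geq |w - Q_n w| - |(w-u_n) - Q_n(w-u_n)| \geq \epsilon_0 - o(1)$.

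Next I would extract a further subsequence so that $v_n$ converges weakly in $X$ to some limit $v_\infty \in X$ (bounded sequences in a Hilbert space have weakly convergent subsequences). Because $V \to O$ is compact, I would split $v_n = Pv_n + (I - P)v_n$; the component $Pv_n \in V$ is bounded in $X$, hence has a subsequence converging \emph{strongly} in $O$ to $PV_\infty$... here I need to be careful: $Pv_n$ need not converge in $X$. The cleaner route: $a(v_n, v_n) = a(Pv_n, Pv_n)$ by (\ref{eq:app}), and I claim $a(v_n,v_n) \to 0$. To see this, note that for any $w' \in W_n$ we have $a(v_n, w') = 0$ since $v_n \in V_n$ and... wait, $W_n \subseteq W$ so $a(\cdot, w') = 0$ on all of $X$; that gives nothing. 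Instead: $a(v_n, v_n) = a(v_n, u_n - w_n) = a(v_n, u_n) = a(u_n - w_n, u_n)$, and since $w_n \in W$, $a(w_n, u_n) = 0$, so $a(v_n, v_n) = a(u_n, u_n) \to a(w,w) = 0$ by continuity of $a$ and $u_n \to w$ in $X$ with $w \in W$. Hence by (\ref{eq:app}) $a(Pv_n, Pv_n) \to 0$, and then the Friedrichs inequality (\ref{eq:frie}) on $V$ gives $|Pv_n|^2 \leq C\, a(Pv_n, Pv_n) \to 0$, so $Pv_n \to 0$ in $O$, and since $\|Pv_n\|^2 = |Pv_n|^2 + a(Pv_n, Pv_n) \to 0$, in fact $Pv_n \to 0$ strongly in $X$. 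That settles the second bullet.

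For the first bullet, $v_n \rightharpoonup v_\infty$ weakly in $X$ (along the subsequence), and $Pv_n \to 0$ strongly in $X$, so $P$ being continuous on $X$ gives $Pv_\infty = \lim P v_n = 0$ weakly, hence $v_\infty \in W$. It remains to check $v_\infty \neq 0$. Here I would test against $w$ itself: $\langle v_n, w\rangle = \langle u_n - Q_n u_n, w\rangle = \langle u_n, w\rangle - \langle Q_n u_n, w\rangle$. Now $\langle Q_n u_n, w \rangle = \langle u_n, Q_n w\rangle$ and $Q_n w \in W_n$ is the best $O$-approximation of $w$ from $W_n$, so $|Q_n w| \leq |w|$ but $|w - Q_n w| \geq \epsilon_0$, whence $|Q_n w|^2 \le |w|^2 - \epsilon_0^2$. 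Since $u_n \to w$ in $O$, $\langle u_n, w\rangle \to |w|^2$ and $\langle u_n, Q_n w\rangle = \langle w, Q_n w\rangle + o(1) = |Q_n w|^2 + o(1) \le |w|^2 - \epsilon_0^2 + o(1)$; wait, I should instead note $\langle w, Q_n w\rangle = |Q_n w|^2$ exactly (since $Q_n$ is an $O$-projection and $Q_n w \in W_n$, $\langle w - Q_n w, Q_n w\rangle = 0$). Therefore $\langle v_n, w\rangle \to |w|^2 - \lim |Q_n w|^2 \geq \epsilon_0^2 > 0$ (passing to a further subsequence so that $|Q_n w|$ converges). Since $w \in O$ and weak convergence in $X$ implies weak convergence in $O$ (the inclusion $X \to O$ is continuous), $\langle v_\infty, w\rangle = \lim \langle v_n, w\rangle \geq \epsilon_0^2 > 0$, so $v_\infty \neq 0$.

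The main obstacle I anticipate is the bookkeeping around the relationship $v_n = u_n - Q_n u_n$ and the interplay of the two topologies: one must resist the temptation to claim strong $X$-convergence of $v_n$ (false in general — that is precisely the phenomenon VG/DC would rule out), while still extracting enough from weak convergence plus the compactness of $V \to O$. The key trick that makes everything routine is the identity $a(v_n, v_n) = a(u_n, u_n) \to 0$ combined with (\ref{eq:app}) and (\ref{eq:frie}), which collapses the $V$-component of $v_n$ to zero \emph{strongly}; after that, only the nonvanishing of the weak limit requires the quantitative input from the failure of ADK, and that is handled by testing against $w$.
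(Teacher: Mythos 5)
Your construction is essentially the paper's own: you approximate the poorly-approximated $w \in W$ by $u_n \in X_n$ in the $X$-norm, take $v_n := (I-Q_n)u_n \in V_n$ (the paper fixes $u_n$ to be the $X$-orthogonal best approximation $Q[X_n]w$, but any sequence converging to $w$ in $X$ works), observe $a(v_n,v_n) = a(u_n,u_n) \to a(w,w) = 0$ and use (\ref{eq:app}) and (\ref{eq:frie}) to conclude $Pv_n \to 0$ in $X$, then extract a weak limit which lies in $W$ because $P$ is weakly continuous and $Pv_n \to 0$. The one place you genuinely diverge is in proving the weak limit is nonzero: you compute directly that
\begin{equation*}
\langle v_n, w\rangle = \langle u_n, w - Q_n w\rangle = |w - Q_n w|^2 + o(1) \geq \epsilon_0^2 + o(1),
\end{equation*}
whereas the paper argues by contradiction, using weak lower semicontinuity of the $X$-norm to show that if $Q[W_n]w \rightharpoonup w$ then $\|w\|^2 \leq \|w\|^2 - \epsilon^2$. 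Both are fine; yours is slightly more direct and quantitative. One harmless slip: the expansion ``$w - v_n = (w-u_n) - Q_n(w-u_n) + (w - Q_n w)$'' is off by a term --- the correct identity is $v_n = (w - Q_n w) + (I-Q_n)(u_n - w)$, equivalently $w - v_n = Q_n w - (I-Q_n)(u_n - w)$ --- and the ensuing claim that $v_n$ ``stays a bounded distance from $W$'' neither follows from a lower bound on $|w - v_n|$ for a single $w \in W$ nor is needed. Boundedness of $(v_n)$ in $X$ already follows from $\|v_n\| = \|(I-Q_n)u_n\| \leq \|u_n\|$, and the nonvanishing is settled by the $\langle v_n, w\rangle$ computation, so nothing downstream depends on the erroneous line.
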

\begin{proof}
For any closed subspace $Y$ of $X$, let $Q[Y]$ be the $X$-orthogonal projection onto $Y$ defined by (\ref{eq:scalpx}). Suppose ADK is not satisfied. Choose $w \in W$,  $\epsilon >0$ and an infinite subset $N$ of $\bbN$ such that for all $n \in N$:
\begin{equation}\label{eq:wbb}
| w - Q[W_n] w| \geq \epsilon.
\end{equation}
By AS we have:
\begin{equation}\label{eq:wcn}
 Q[V_n] w + Q[W_n]w = Q[X_n] w  \to w \rmins{in} X. 
\end{equation}
Since $Q[V_n]w$ is bounded in $X$, we may suppose in addition that it converges weakly in $X$ to some $w'$. Since $(w - Q[W_n]w)$ then also converges weakly to $w'$, we must have $w' \in W$. 

If $w'= 0$, $Q[W_n]w$ converges weakly to $w$ and we can write:
\begin{align}
\| w\|^2 \leq& \liminf \| Q[W_n] w \|^2,\\
 =&  \liminf (\| w\|^2 - \| w - Q[W_n] w \|^2),\\
 \leq& \| w\|^2 - \epsilon^2.
\end{align}
This is impossible. So  $w' \neq 0$. 


We have, by (\ref{eq:wcn}):
\begin{equation}
a(Q[V_n] w, Q[V_n] w) = a(Q[X_n] w, Q[X_n] w) \to a(w,w) = 0,
\end{equation}
Hence, using (\ref{eq:app}), $PQ[V_n] w$ converges strongly in $X$ to $0$.

The subsequence $v_n = Q[V_n] w$ satisfies the stated conditions.
\end{proof}

\begin{proposition}\label{prop:adknsc} The following are equivalent:
\begin{itemize}
\item ADK.
\item For any subsequence $v_n \in V_n$ that converges weakly in $O$ to some $v\in O$, we have $v \in \overline V$.
\item For any subsequence  $v_n \in V_n$ that converges weakly in $X$ to some $v\in X$, we have $v \in V$.
\end{itemize}
\end{proposition}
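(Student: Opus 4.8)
The plan is to prove the cyclic chain $\mathrm{ADK} \Rightarrow \text{(second bullet)} \Rightarrow \text{(third bullet)} \Rightarrow \mathrm{ADK}$. It is worth noting from the start why one should not try to traverse the cycle the other way: a sequence $v_n \in V_n$ converging weakly in $O$ need not be bounded in $X$, so one cannot conclude directly that it converges weakly in $X$; hence the $O$-statement is the one that must be proved first, directly from ADK.

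\textbf{ADK $\Rightarrow$ second bullet.} Suppose $v_n \in V_n$ converges weakly in $O$ to $v$. Then $(v_n)$ is bounded in $O$, say $|v_n| \leq M$ for all $n$. Fix $w \in W$. By ADK choose $w_n \in W_n$ with $|w - w_n| \to 0$. Because $v_n \in V_n$ and $w_n \in W_n$, the definition of $V_n$ gives $\langle v_n, w_n\rangle = 0$, so $\langle v_n, w\rangle = \langle v_n, w - w_n\rangle$ and therefore $|\langle v_n, w\rangle| \leq M\,|w - w_n| \to 0$. Since also $\langle v_n, w\rangle \to \langle v, w\rangle$ by weak convergence, $\langle v, w\rangle = 0$. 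As $w \in W$ was arbitrary, $v \in \overline V$.

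\textbf{Second bullet $\Rightarrow$ third bullet, and third bullet $\Rightarrow$ ADK.} If $v_n \in V_n$ converges weakly in $X$ to $v \in X$, then since the inclusion $X \to O$ is continuous, hence weakly continuous, the sequence also converges weakly in $O$ to $v$; the second bullet then yields $v \in \overline V$, and combined with $v \in X$ and (\ref{eq:intersect}) this gives $v \in \overline V \cap X = V$. For the remaining implication I would argue by contraposition: if ADK fails, Lemma \ref{lem:noadk} produces a subsequence $v_n \in V_n$ converging weakly in $X$ to a non-zero element $w' \in W$; since the decomposition $X = V \oplus W$ is direct, $V \cap W = \{0\}$, so $w' \notin V$ and the third bullet fails.

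The substance behind this proposition has already been packaged into Lemma \ref{lem:noadk} --- that is where AS and the geometry of the splitting genuinely enter --- so the remaining steps, as sketched, are routine Hilbert-space manipulations and I do not anticipate any real obstacle. The one point I would be careful to state explicitly is the asymmetry between the $O$-weak and $X$-weak limits noted above, which is what forces the cyclic ordering of the argument rather than two independently proved biconditionals.
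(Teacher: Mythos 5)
Your proof is correct and follows essentially the same route as the paper: the same cyclic chain $\mathrm{ADK} \Rightarrow \text{(ii)} \Rightarrow \text{(iii)} \Rightarrow \mathrm{ADK}$, with the first implication via $\langle v_n, w_n\rangle = 0$ and strong $O$-convergence of $w_n$, the second via $V = \overline V \cap X$, and the third by contraposition through Lemma \ref{lem:noadk}. The only difference is presentational: you make the bound $|\langle v_n, w - w_n\rangle| \leq M\,|w-w_n|$ explicit where the paper simply asserts $\lim \langle v_n, w_n\rangle = \langle v, w\rangle$, and your remark on why the cycle must be traversed in this order is a useful gloss the paper leaves implicit.
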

\begin{proof}
(i) Suppose ADK holds. Let $v_n \in V_n$ converge weakly in $O$ to $v$. Pick $w \in W$ and choose $w_n \in W_n$ converging, in the $O$-norm, to $w$. Then we have:
\begin{equation}
\langle v, w \rangle = \lim_{n \to \infty} \langle v_n, w_n \rangle = 0,
\end{equation}
so that $v\in \overline V$. 

(ii) The third statement follows from the second by (\ref{eq:intersect}).

(iii) The third statement implies the first by Lemma \ref{lem:noadk}.
\end{proof}

The following corresponds to Proposition 2.21 in \cite{CaoFerRaf00}.
\begin{proposition}\label{prop:dfadk}
DF implies ADK.
\end{proposition}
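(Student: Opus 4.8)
The plan is to argue by contrapositive: assuming ADK fails, I will produce a sequence in some $V_n$ that is bounded in $X$ but not controlled by $a$, thereby violating DF. Lemma~\ref{lem:noadk} already hands us most of what is needed. Indeed, if ADK does not hold, the lemma yields a subsequence $v_n \in V_n$ converging weakly in $X$ to a \emph{non-zero} element $w \in W$, and with $Pv_n \to 0$ strongly in $X$. The first step is to record that $a(v_n, v_n) = a(Pv_n, Pv_n) = \|Pv_n\|^2_a \to 0$ by the identity (\ref{eq:app}) and the equivalence of $a$ and the $X$-norm on $V$ (the Friedrichs inequality (\ref{eq:frie}) together with (\ref{eq:app})); in fact $a(v_n,v_n) = a(Pv_n, Pv_n) \leq \|Pv_n\|^2 \to 0$ directly, without even invoking (\ref{eq:frie}).

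Next I need a lower bound on $|v_n|$ that does not go to zero. Here I use that weak convergence in $X$ implies weak convergence in $O$ (since $X \hookrightarrow O$ is continuous), so $v_n \rightharpoonup w$ weakly in $O$, whence $|w| \leq \liminf |v_n|$. Since $w \in W$ is non-zero and $a$ restricted to $W$ vanishes, $w \notin V$ and in particular $w \neq 0$ forces $|w| > 0$; thus $\liminf |v_n| \geq |w| > 0$. Combining, for large $n$ we have $|v_n|^2 \geq |w|^2/2 > 0$ while $a(v_n, v_n) \to 0$, so no constant $C$ can satisfy $|v_n|^2 \leq C\, a(v_n, v_n)$ for all $n$. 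This contradicts DF, completing the contrapositive.

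Alternatively, and perhaps more cleanly, one can avoid the weak-convergence lower bound: since $v_n - Pv_n \in W$ and $Pv_n \to 0$ in $X$ (hence in $O$), while $v_n \rightharpoonup w \neq 0$, we get $v_n - Pv_n \rightharpoonup w$, so $\|v_n - Pv_n\|$ is bounded below away from $0$; but $\|v_n - Pv_n\|_a = 0$ since $v_n - Pv_n \in W$, and $|v_n - Pv_n| \geq c > 0$ eventually, whereas $a(v_n - Pv_n, v_n - Pv_n) = 0 < (1/C)\,|v_n-Pv_n|^2$ — this is an even more direct violation. Either normalization works; I would present the first since it keeps the sequence $v_n$ itself in play.

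The only subtle point — and the one I would flag as the main thing to get right — is ensuring that the lower bound on $|v_n|$ (or on $|v_n - Pv_n|$) is genuinely uniform along the subsequence, which rests on lower semicontinuity of the $O$-norm under weak $O$-convergence applied to the weak $X$-limit; this is where the hypothesis that $w$ is \emph{non-zero} in Lemma~\ref{lem:noadk} is essential. Everything else is a routine assembly of (\ref{eq:app}), the continuity of $X \hookrightarrow O$, and the definition of DF.
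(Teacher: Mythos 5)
Your proposal is correct and follows the same route as the paper: argue by contrapositive, invoke Lemma~\ref{lem:noadk} to obtain a subsequence $v_n \in V_n$ with $Pv_n \to 0$ in $X$ and $v_n \rightharpoonup w \neq 0$ weakly, use identity (\ref{eq:app}) to get $a(v_n,v_n) = a(Pv_n,Pv_n) \to 0$, and observe that $v_n$ cannot vanish in $O$. The paper states the last point in one line (``$v_n$ cannot converge strongly to $0$ in $O$'') where you spell out the weak lower semicontinuity $|w| \leq \liminf |v_n|$; this is precisely the justification the paper leaves implicit, so your version is simply a more explicit rendering of the same argument.
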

\begin{proof}
Suppose ADK does not hold. Let $v_n$ be a subsequence as in Lemma \ref{lem:noadk}. Recalling (\ref{eq:app}), we get that:
\begin{equation}
a(v_n,v_n) = a(Pv_n, Pv_n) \to 0,
\end{equation}
but $v_n$ cannot converge strongly to $0$ in $O$. Hence DF is not satisfied. 
\end{proof}

\begin{remark}\label{rem:dcsubs}
So far the finite element literature has been rather cavalier about taking \emph{sub}-sequences from the outset in the formulation of DC, probably because it is rarely made clear what the set of indexes is, in the first place. See Remark 19.3 in \cite{Bof10}. Without this precision one cannot hope to deduce eigenvalue convergence, since one cannot rule out that good Galerkin spaces are interspaced with bad ones.  
\end{remark}

In the formulation of DC some authors, including \cite{CaoFerRaf00} and \cite{Bof10}, prefer not to impose that the limit is in $V$. Compactness is also frequently used in the form that a weak convergence implies a strong one. For completeness we state the following two alternatives to DC:
\begin{description}
\item[DC']  We say DC' holds if, from any subsequence of elements $v_n \in V_n$ which is bounded in $X$, one can extract a subsequence converging strongly in $O$.
\item[DC''] We say DC'' holds if, for any sequence of elements $v_n \in V_n$ which converges weakly in $X$, the convergence is strong in $O$.
\end{description}
Notice that in the last variant, we use sequences, not subsequences -- though we could have done that as well. These conditions are related as follows.
\begin{proposition}\label{prop:dcvariants} The following are equivalent:
\begin{itemize}
\item DC.
\item DC' and ADK.
\item DC'' and ADK.
\end{itemize}
\end{proposition}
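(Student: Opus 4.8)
The plan is to prove the three equivalences by establishing the chain ``DC $\Rightarrow$ (DC$'$ and ADK) $\Rightarrow$ (DC$''$ and ADK) $\Rightarrow$ DC'', handling the ADK parts via Proposition~\ref{prop:adknsc} and the DC/DC$'$/DC$''$ parts by elementary arguments about weak and strong convergence.

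\textbf{Step 1: DC implies DC$'$ and ADK.} The implication DC $\Rightarrow$ DC$'$ is trivial, since DC$'$ is DC with the requirement on the limit dropped. For DC $\Rightarrow$ ADK I would use the third characterization in Proposition~\ref{prop:adknsc}: let $v_n \in V_n$ converge weakly in $X$ to some $v \in X$; I must show $v \in V$. The weakly convergent sequence is bounded in $X$, so by DC some subsequence converges strongly in $O$ to an element of $V$; that strong $O$-limit must agree with the weak $O$-limit $v$ (weak $X$-convergence implies weak $O$-convergence since $X \to O$ is continuous), hence $v \in V$. By Proposition~\ref{prop:adknsc}, ADK holds.

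\textbf{Step 2: (DC$'$ and ADK) implies (DC$''$ and ADK).} Only DC$'' $ needs proof. Let $v_n \in V_n$ converge weakly in $X$; then $(v_n)$ is $X$-bounded, so by DC$'$ every subsequence has a further subsequence converging strongly in $O$. All these strong $O$-limits coincide with the weak $O$-limit of $(v_n)$ (which exists because weak $X$-convergence gives weak $O$-convergence). Since every subsequence of $(v_n)$ has a further subsequence converging in $O$ to this common limit, the whole sequence $(v_n)$ converges strongly in $O$; this is DC$''$.

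\textbf{Step 3: (DC$''$ and ADK) implies DC.} Let $(v_n)$ be a subsequence of elements of $V_n$ bounded in $X$. By reflexivity of $X$, extract a further subsequence converging weakly in $X$ to some $v \in X$. By ADK and Proposition~\ref{prop:adknsc}, $v \in V$. By DC$''$ applied to this weakly $X$-convergent sequence, the convergence to $v$ is strong in $O$. Thus we have extracted a subsequence converging strongly in $O$ to an element of $V$, which is exactly DC.

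The arguments are all routine once the right characterization of ADK is invoked; the only point requiring a little care is the standard ``subsequence of a subsequence'' argument in Step~2 to upgrade subsequential strong $O$-convergence to strong $O$-convergence of the full sequence, together with the observation that weak convergence in $X$ forces weak convergence in $O$ so that all the candidate limits are forced to be the same. I do not anticipate a genuine obstacle here; the substance of the proposition is really packaged inside Proposition~\ref{prop:adknsc} and the reflexivity of $X$.
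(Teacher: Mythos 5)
Your Steps 1 and 2 match the paper's argument in substance; Step 2 uses the ``subsequence of a subsequence'' principle directly where the paper argues by contradiction, but these are two phrasings of the same fact.

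Step 3 has a genuine gap, and it is exactly the one the paper flags in the sentence ``Notice that in the last variant, we use sequences, not subsequences.'' You take a subsequence bounded in $X$, extract a further weakly convergent subsubsequence indexed by some infinite $N' \subseteq \bbN$, identify the weak limit $v \in V$ via ADK, and then ``apply DC$''$ to this weakly $X$-convergent sequence.'' But DC$''$ as stated applies only to \emph{full} sequences $(v_n)_{n \in \bbN}$ with $v_n \in V_n$ for every $n$, not to subsequences indexed by $N'$. Since your family is defined only on $N'$, DC$''$ does not directly apply. The paper's proof bridges this by filling in the missing indices $n \notin N'$ with the best approximation of $v$ in $V_n$; because $v \in V$ (this is where ADK is used twice over) Proposition~\ref{prop:asv} shows these fill-ins converge strongly in $X$ to $v$, so the completed sequence $(v_n)_{n \in \bbN}$ converges weakly in $X$ to $v$, and DC$''$ then yields strong $O$-convergence of the full sequence, hence of the original subsubsequence. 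Without this filling-in step the argument, as written, invokes DC$''$ outside its scope. Given that the whole point of Remark~\ref{rem:dcsubs} is to warn against being cavalier about sequences versus subsequences, this is a detail you should not elide; you need to either perform the fill-in or explicitly restate and reprove DC$''$ in subsequence form.
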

\begin{proof}
(i) Suppose  DC holds. DC' trivially holds, so we focus on ADK. Consider a subsequence  $v_n \in V_n$ that converges weakly in $X$ to some $v\in X$. Extract from it one that converges strongly in $O$ to an element in $V$. Since this limit must be $v$, $v \in V$. This proves ADK by Proposition \ref{prop:adknsc}.

(ii) Let a sequence $v_n\in V_n$ converge weakly in $X$ to say $v$. If it does not converge strongly in $O$ to $v$ choose an $\epsilon >0$ and a subsequence for which $|v_n - v| \geq \epsilon$. No subsequence of this subsequence can converge strongly in $O$, since the limit would have to be $v$. Hence DC' implies DC''.

(iii) Suppose DC'' and ADK hold. Consider a subsequence $v_n \in V_n$ bounded in $X$. Extract from it one that converges weakly in $X$ to say $v$. By ADK, $v \in V$. For the indexes $n$ not pertaining to this subsequence, insert the best approximation of $v$ in $V_n$. Using Proposition \ref{prop:asv} we now have a sequence converging weakly in $X$ to $v$, so by DC'' it converges strongly in $O$. 

This proves DC.
\end{proof}
\begin{remark} Neither DC' nor DC'' implies ADK. Consider the case where $W$ is finite dimensional but non zero. Since both $W$ and $V$ are compactly injected into $O$, the injection $X \to O$ is compact, so that DC' and DC'' are automatically satisfied.  But one can construct subspaces $X_n$ satisfying AS, whose intersection with $W$ is zero, so that ADK is not satisfied.
\end{remark}

Corresponding to the well known fact that DC implies DF, we state:
\begin{proposition}\label{prop:vgdf}
VG implies DF.
\end{proposition}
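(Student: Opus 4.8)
The plan is to exploit the characterization $V = \overline V \cap X$ together with the Friedrichs inequality \eqref{eq:frie}, which says $a$ is equivalent to $\|\cdot\|^2$ on $V$. Suppose VG holds but DF fails. Then there is a sequence $v_n \in V_n$ with $|v_n| = 1$ (after normalization in $O$) but $a(v_n, v_n) \to 0$. First I would note that $\|v_n\|^2 = |v_n|^2 + a(v_n,v_n) \to 1$, so the sequence $(v_n)$ is bounded in $X$; in particular $\|v_n\| \cleq 1$ and $\|v_n\| \to 1$.

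Next, since $\delta(V_n, V) \to 0$, for each $n$ I can pick $\tilde v_n \in V$ with $\|v_n - \tilde v_n\| \leq (\delta(V_n,V) + 1/n)\|v_n\|$, so that $\|v_n - \tilde v_n\| \to 0$. Hence $(\tilde v_n)$ is also bounded in $X$, and $a(\tilde v_n, \tilde v_n) \to 0$ as well, because $a$ is continuous on $X$ and $a(v_n,v_n)\to 0$: indeed $|a(\tilde v_n,\tilde v_n) - a(v_n,v_n)| \leq \|a\|\,(\|\tilde v_n\| + \|v_n\|)\,\|\tilde v_n - v_n\| \to 0$. But $\tilde v_n \in V$, where by \eqref{eq:frie} we have $|\tilde v_n|^2 \leq C\, a(\tilde v_n, \tilde v_n) \to 0$, so $\tilde v_n \to 0$ in $O$. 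On the other hand $|v_n - \tilde v_n| \leq \|v_n - \tilde v_n\| \to 0$ (using continuity of $X \to O$, with constant absorbed into $\cleq$), so $|v_n| \to 0$, contradicting $|v_n| = 1$.

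The argument is essentially a three-step chain: VG transfers the sequence from $V_n$ to nearby elements of $V$ with negligible error in $\|\cdot\|$; continuity of $a$ transfers the smallness of $a(v_n,v_n)$ to $a(\tilde v_n,\tilde v_n)$; and the genuine Friedrichs inequality on $V$ then forces the $O$-norms to vanish, contradicting normalization. The only mildly delicate point is making sure the error terms are controlled in the $X$-norm, not merely in $O$ — this is exactly why the gap in the definition of VG is taken with respect to $\|\cdot\|$ rather than $|\cdot|$, and it is what makes the continuity-of-$a$ step legitimate. No compactness is needed here; VG is being used as a quantitative closeness statement, which is strictly more than DF delivers.
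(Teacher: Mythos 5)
Your argument is correct: it uses exactly the two ingredients of the paper's proof --- the $X$-norm gap to replace $v_n\in V_n$ by a nearby element of $V$, and the continuous Friedrichs inequality \eqref{eq:frie} on $V$ --- only packaged as a contradiction instead of the paper's direct estimate, which takes the approximant to be $Pv$, notes $\|v-Pv\|\leq\delta(V_n,V)\|v\|$ hence $\|v\|\leq 2\|Pv\|$ once $\delta(V_n,V)\leq 1/2$, and transfers $a$ exactly via $a(Pv,Pv)=a(v,v)$ rather than approximately via continuity. The only cosmetic point is that the negation of DF yields a \emph{sub}sequence $v_{n_k}\in V_{n_k}$ (with $n_k\to\infty$, since $a$ is definite on each finite-dimensional $V_n$), but the argument runs unchanged along it.
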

\begin{proof}
 We have, for $v \in V_n$:
\begin{equation}
 \| v  - Pv \| \leq \delta (V_n, V) \| v \|,
\end{equation}
which gives:
\begin{equation}
(1 -\delta (V_n, V))  \| v \| \leq \| P v \|.
\end{equation}
For, say, $ \delta (V_n, V) \leq 1/2$ we then get:
\begin{equation}
 |v| \leq \| v \| \leq 2 \|Pv\| \cleq a(Pv,Pv)^{1/2} = a(v,v)^{1/2}, 
\end{equation}
which completes the proof.
\end{proof}

The following is an abstract variant of Proposition 7.13 in \cite{BufPer06}.
\begin{proposition}\label{prop:dcvg}
DC is equivalent to VG.
\end{proposition}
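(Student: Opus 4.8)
The plan is to prove the two implications separately, using the weak-compactness machinery already set up. For the direction VG $\Rightarrow$ DC, suppose $\delta(V_n,V)\to 0$ and let $v_n\in V_n$ be a subsequence bounded in $X$. By VG we may pick $\tilde v_n=Pv_n\in V$ with $\|v_n-\tilde v_n\|\le\delta(V_n,V)\|v_n\|\to 0$; since $(v_n)$ is bounded, $(\tilde v_n)$ is bounded in $V$, hence (by compactness of $V\to O$) has a subsequence converging strongly in $O$ to some $v\in\overline V$, and since $\tilde v_n\in V$ with $V$ closed in $X$ but \emph{not} in $O$ we only get $v\in\overline V$ a priori. To upgrade $v\in V$: from $v_n-\tilde v_n\to 0$ in $X$ (hence in $O$) we get $v_n\to v$ strongly in $O$; extracting a further subsequence along which $v_n\rightharpoonup v'$ weakly in $X$ (possible since $(v_n)$ is $X$-bounded), the weak $X$-limit and strong $O$-limit must agree, so $v'=v\in X$; and by Proposition \ref{prop:adknsc} (applied via VG $\Rightarrow$ DF $\Rightarrow$ ADK, using Propositions \ref{prop:vgdf} and \ref{prop:dfadk}) a weak $X$-limit of elements of $V_n$ lies in $V$. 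Hence $v\in V$ and $v_n\to v$ strongly in $O$, which is DC.

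For the converse DC $\Rightarrow$ VG, I would argue by contradiction. If VG fails, there is $\epsilon>0$ and a subsequence $v_n\in V_n$ with $\|v_n\|=1$ and $\inf_{v'\in V}\|v_n-v'\|\ge\epsilon$; equivalently, writing $v_n=Pv_n+(I-P)v_n$, the component $(I-P)v_n$ stays bounded away from $0$ in $X$ (note $Pv_n\in V$ is an admissible competitor), so $\|(I-P)v_n\|\ge\epsilon$ for all $n$ in the subsequence. Now $(v_n)$ is $X$-bounded, so by DC a subsequence converges strongly in $O$ to some $v\in V$. I then want to show this forces $v_n\to v$ strongly in $X$, which would give $(I-P)v_n\to(I-P)v=0$ in $X$, contradicting $\|(I-P)v_n\|\ge\epsilon$. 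The strong $X$-convergence should come from controlling the two orthogonal pieces of $\|v_n-v\|^2=|v_n-v|^2+a(v_n-v,v_n-v)$: the $O$-part $|v_n-v|^2\to 0$ by DC, and for the energy part, using $a(v_n-v,v_n-v)=a(P(v_n-v),P(v_n-v))$ and the characterization $a(Pu,z)=a(u,z)$ for $z\in X$, one relates $a(v_n-v,z)$ to $\langle K^{-1}\text{-type}\rangle$ quantities; more cleanly, one can extract a weak $X$-limit of $v_n$, which by Proposition \ref{prop:adknsc} (DC $\Rightarrow$ ADK by Proposition \ref{prop:dcvariants}) lies in $V$ and must equal $v$, and then upgrade weak to strong in $X$ by showing $\|v_n\|\to\|v\|$: indeed $|v_n|\to|v|$ from DC, and $a(v_n,v_n)\to a(v,v)$ needs an argument.

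The step I expect to be the main obstacle is precisely this last one: upgrading weak $X$-convergence of $v_n$ to $v\in V$ to strong $X$-convergence, i.e. showing the energy norms $a(v_n,v_n)$ converge to $a(v,v)$. One natural route: test the discrete equation / use that $Pv_n\to v$ weakly in $X$ and that on $V$ the form $a$ is an inner product inducing an equivalent norm, then exploit $a(v_n,v_n)=a(Pv_n,Pv_n)$ together with some lower semicontinuity plus a matching upper bound obtained by comparing $v_n$ against its best $V$-approximation of $v$ inserted along a full sequence (cf. the trick in the proof of Proposition \ref{prop:dcvariants}(iii) and Proposition \ref{prop:asv}(iii)). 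If a direct energy-convergence argument is awkward, an alternative is to avoid strong $X$-convergence entirely: show directly that $\|(I-P)v_n\|_X\to 0$ by writing $(I-P)v_n\in W$ with $|(I-P)v_n|\le|v_n-v|+|v-Pv_n|$ and using that on $W$ the $O$- and $X$-norms coincide (since $a$ vanishes on $W$), which collapses the whole difficulty — then $\|(I-P)v_n\|=|(I-P)v_n|\le|v_n-v|+|(I-P)(v-v_n)|\le 2|v_n-v|\to 0$, directly contradicting $\|(I-P)v_n\|\ge\epsilon$. I would present the proof in the second form, since it sidesteps the energy-convergence subtlety and uses only DC (for the strong $O$-limit $v\in V$), the orthogonality of the splitting $X=V\oplus W$, and the identity of norms on $W$.
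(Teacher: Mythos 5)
Your proposal is correct and, in the form you settle on, follows essentially the same route as the paper: for VG $\Rightarrow$ DC you compare $v_n$ with $Pv_n$ and use compactness of $V\to O$, and for DC $\Rightarrow$ VG you exploit that $(I-P)v_n\in W$, where the $X$- and $O$-norms coincide, so the gap is controlled by the strong $O$-convergence supplied by DC. The detour through ADK in the first direction and the discarded strong-$X$-convergence attempt are unnecessary but harmless.
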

\begin{proof}
(i) Suppose VG holds. Suppose a subsequence of elements $v_n \in V_n$ is bounded in $X$. Then $Pv_n \in V$ is bounded in $X$, so it has a subsequence converging strongly in $O$ and weakly in $X$. By (\ref{eq:intersect}) the limit, call it $v$, is in $V$.  We have:
\begin{equation}
|v_n - Pv_n| \leq \delta(V_n,V)\|v_n\| \to 0,
\end{equation}
so that $v_n$ converges in $O$ to $v$. Hence DC holds.

(ii) Suppose that DC holds but not VG. We get a contradiction as follows. 
Choose $\epsilon >0$ and a subsequence of elements $v_n \in V_n$ which is bounded in $X$ but such that, for all $n$ pertaining to the subsequence:
\begin{equation}
\|v_n - P v_n\| \geq \epsilon.
\end{equation}
Extract a subsequence that converges strongly in $O$ to some $v \in V$. Then $(Pv_n)$ also converges to $v$ in $O$. We can now write:
\begin{equation}
|v_n - v| \geq |v_n - Pv_n| - |Pv_n - v| = \|v_n - P v_n\| - |Pv_n - v|,
\end{equation}
which is eventually bounded below by $\epsilon/2$, contradicting the convergence of $(v_n)$.
\end{proof}

As already indicated, ODF implies DF, but actually something stronger is true:
\begin{proposition}\label{prop:odfdc}
ODF implies DC.
\end{proposition}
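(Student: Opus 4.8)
The plan is to verify the definition of DC head-on: starting from a sequence $v_n\in V_n$ bounded in $X$, I want to extract a subsequence converging strongly in $O$ to an element of $V$. The idea is that ODF lets one control $v_n$ by its projection $Pv_n\in V$, the latter being precompact in $O$ because $V\to O$ is compact; the only thing to arrange carefully is that ODF, being an inequality about a \emph{single} space $V_n$, is applied to an element that genuinely lies in $V_n$.

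First I would pass to the sequence $Pv_n\in V$. Since $P$ is an $X$-orthogonal projector, $\|Pv_n\|\le\|v_n\|$, so $(Pv_n)$ is bounded in $X$. As $X$ is a Hilbert space and $V\to O$ is compact, I can extract a subsequence along which $Pv_n$ converges weakly in $X$ and strongly in $O$; the two limits agree, call the common value $\xi$, and since $V$ is closed (hence weakly closed) in $X$ we get $\xi\in V$. (Alternatively $\xi\in V$ could be obtained from ADK, which ODF implies via DF, using Proposition~\ref{prop:adknsc}, but the extraction above is self-contained.)

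The decisive step is to bring $\xi$ back inside the discrete spaces. The naive candidate $v_n-Pv_n$ is useless since $Pv_n\notin V_n$ in general; instead I invoke AS. By Proposition~\ref{prop:asv} there are $r_n\in V_n$ with $\|\xi-r_n\|\to 0$, hence also $|\xi-r_n|\to 0$. Now $v_n-r_n\in V_n$, so ODF applies to it and gives
\[
|v_n-r_n|\ \le\ C\,|P(v_n-r_n)|\ =\ C\,|Pv_n-Pr_n|.
\]
On the right, $Pv_n\to\xi$ in $O$ by construction, while $Pr_n\to P\xi=\xi$ in $O$ because $P$ is a contraction on $O$ and $r_n\to\xi$ in $O$; hence the right-hand side tends to $0$. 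Combining with $|r_n-\xi|\to 0$ yields $v_n\to\xi$ in $O$ along the chosen subsequence, with $\xi\in V$, which is precisely DC.

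The step I expect to be the crux is not any estimate but the observation underlying it: although $\xi$ need not belong to $V_n$, its $X$-near-best approximant $r_n$ supplied by AS does, so that the single-space inequality ODF can legitimately be applied to $v_n-r_n$. The joint extraction identifying one limit $\xi\in V$ for $Pv_n$ in both the weak-$X$ and strong-$O$ topologies is routine, and, somewhat pleasantly, neither DF nor any DC-variant is needed explicitly beyond this.
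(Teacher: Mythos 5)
Your proof is correct and follows essentially the same line as the paper's: extract a subsequence with $Pv_n \to \xi \in V$ in $O$, pull $\xi$ back into $V_n$ via Proposition~\ref{prop:asv} (you use the $X$-norm approximant, the paper uses the $O$-norm one, which is immaterial here), and apply ODF to the difference $v_n - r_n \in V_n$. The paper's proof is the same argument with $u_n$ in place of $r_n$.
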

\begin{proof}
Suppose ODF holds. Consider a subsequence $v_n\in V_n$, bounded in $X$. Then $Pv_n$ is bounded in $V$, so we may find a subsequence converging strongly in $O$ to some $v \in V$. Choose $u_n\in V_n$ converging strongly to $v$ in $O$ (Proposition \ref{prop:asv}). Then $Pu_n$ converges to $v$ in $O$. Using ODF we deduce:
\begin{align}
|v_n - u_n| &\cleq |Pv_n - Pu_n|,\\
& \cleq |v- P v_n| + |v - Pu_n| \to 0.
\end{align}
Therefore $v_n$ converges to $v$ strongly in $O$.
\end{proof}

One can also consider gaps defined by the norm of $O$ rather than $X$. Explicitly, for subspaces $U$ and $U'$ of $O$ we define:
\begin{equation}
\delta_O(U,U')= \sup_{u\in U}\inf_{ u'\in U'}| u - u'|/|u|.
\end{equation}

\begin{proposition}\label{prop:odfgap}
ODF is equivalent to the condition: there exists $\delta <1$ such that for all $n$, $\delta_O(V_n, \overline V) \leq \delta$.
\end{proposition}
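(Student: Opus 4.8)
The plan is to reduce both conditions to a single quantitative statement about the ratio $|Pv|/|v|$ for $v \in V_n$, exploiting that $P$ is the $O$-orthogonal projection onto $\overline V$ along $W$.

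First I would record the elementary observation that, since $P$ is orthogonal in $O$ with range $\overline V$, for any $v \in V_n \subseteq X \subseteq O$ the element of $\overline V$ closest to $v$ in the $O$-norm is precisely $Pv$; hence
\[
\delta_O(V_n, \overline V) = \sup_{v \in V_n \setminus \{0\}} \frac{|v - Pv|}{|v|}.
\]
Next I would invoke the Pythagorean identity for the orthogonal splitting $O = \overline V \oplus W$: writing $v = Pv + (v - Pv)$ with $Pv \in \overline V$ and $v - Pv \in W$, one has $|v|^2 = |Pv|^2 + |v - Pv|^2$, so that
\[
\frac{|v - Pv|^2}{|v|^2} = 1 - \frac{|Pv|^2}{|v|^2}.
\]

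The equivalence is then pure bookkeeping. If ODF holds with constant $C$, then $|Pv| \geq |v|/C$ for every $v \in V_n$ and every $n$, hence $|v - Pv|^2/|v|^2 \leq 1 - 1/C^2$, so $\delta_O(V_n, \overline V) \leq (1 - 1/C^2)^{1/2} =: \delta < 1$ for all $n$. Conversely, if $\delta_O(V_n, \overline V) \leq \delta < 1$ for all $n$, then for every $v \in V_n$ we get $|v - Pv|^2 \leq \delta^2 |v|^2$, whence $|Pv|^2 = |v|^2 - |v - Pv|^2 \geq (1 - \delta^2)|v|^2$, i.e. $|v| \leq (1 - \delta^2)^{-1/2}|Pv|$, which is ODF with $C = (1 - \delta^2)^{-1/2}$.

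The argument is essentially forced, so I do not expect a serious obstacle; the only points requiring a word of care are the convention that gaps are taken over nonzero subspaces (so the stated bound is vacuous for those finitely many indices $n$, if any, with $V_n = \{0\}$, and these do not affect either condition), and the fact that one genuinely needs $P$ to be the \emph{$O$-orthogonal} projection — not merely a bounded projection — both for the Pythagorean step and for identifying $Pv$ as the $O$-best approximation of $v$ in $\overline V$.
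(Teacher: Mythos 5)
Your proof is correct and uses essentially the same idea as the paper: $\delta_O(V_n,\overline V)$ is exactly $\sup_{v\in V_n}|v-Pv|/|v|$ because $P$ is the $O$-orthogonal projection onto $\overline V$, and the Pythagorean identity $|v|^2 = |Pv|^2 + |v-Pv|^2$ turns each condition into a bound on the other. The only cosmetic difference is that the paper proves the forward direction by contrapositive (a subsequence with $|v_n-Pv_n|/|v_n|\to 1$ forces $|Pv_n|/|v_n|\to 0$), whereas you compute the bound $\delta = (1-1/C^2)^{1/2}$ directly; the underlying algebra is identical.
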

\begin{proof}
Remark first that for all $n$ and all non-zero $v \in V_n$, $|v - Pv| < |v|$, so by compactness of the unit sphere of $V_n$, $\delta_O(V_n, \overline V) <1$.

(i) If the stated condition did not hold we would have a subsequence of elements $v_n \in V_n$ such that $|v_n|= 1$ and $|v_n - Pv_n|$ converges to $1$. Then we have:
\begin{equation}
 |P v_n|/|v_n| = (1 - |v_n - Pv_n|^2)^{1/2} \to 0.
\end{equation}
Hence ODF does not hold.

(ii) If on the other hand the stated condition holds we have, for all $n$ and $v \in V_n$:
\begin{equation}
|Pv|^2 = |v|^2 - |v - Pv|^2 \geq (1 - \delta^2) |v|^2,
\end{equation} 
and this gives ODF.
\end{proof}

\begin{remark} \label{rem:divcurl} In the context of $\curl$ problems with natural boundary conditions in convex domains (see Remark \ref{rem:exoneandtwo}), the ODF has the following equivalent formulation in terms of a negative norm: 
\begin{equation}
 \forall n \ \forall v \in V_n  \quad \|v\|_{\rmL^2(S)} \leq C \| \curl v \|_{\rmH^{-1}(S)}.
\end{equation} 
Here, $V_n$ is the subspace of vectorfields in $X_n$ that are orthogonal to discrete gradients (of functions that can be non-zero on the boundary). In this form the condition was discussed in connection with a discrete div curl lemma in \cite{Chr05}. An advantage of the present formulation is to avoid negative norms and boundary conditions.

In \cite{Chr09Calc} it was pointed out that for the discrete div curl lemma to hold, ODF is actually necessary, at least on periodic domains.
\end{remark}

\begin{remark}
An interpretation of results in \cite{Chr09Calc} (\S 3), is that for some standard spaces satisfying ODF (the edge elements of \cite{Ned80} for instance) we do \emph{not} not have that $\delta_O(V_n, \overline V)$ converges to $0$. 
\end{remark}

\subsection{Convergence of operators \label{sec:convop}}
We now explore the relation of the conditions DF and VG to the convergence of the operators $K_n$.
 
We first state, concerning $P_n$:
\begin{proposition}\label{prop:dfunifb} The following are equivalent:
\begin{itemize}
\item DF.
\item For any $u\in X$, $P_nu$ converges to $P u$ in $X$.
\item The $P_n$ are uniformly bounded $X \to X$.
\item The $P_n$ are uniformly bounded $V \to O$.
\end{itemize}
\end{proposition}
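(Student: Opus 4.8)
The plan is to prove the chain of implications $(ii)\Rightarrow(iii)\Rightarrow(iv)\Rightarrow\text{DF}\Rightarrow(ii)$, which is the most economical route given what is already available.

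First I would observe that $(ii)\Rightarrow(iii)$ is essentially the uniform boundedness principle: if $P_nu\to Pu$ in $X$ for every $u\in X$, then in particular $\sup_n\|P_nu\|<\infty$ for each $u$, so by Banach--Steinhaus the operators $P_n:X\to X$ are uniformly bounded. The implication $(iii)\Rightarrow(iv)$ is immediate since the inclusion $V\to O$ is continuous: $|P_nv|\lesssim\|P_nv\|\lesssim\|v\|$, and on $V$ the norm $\|\cdot\|$ is equivalent to $a(\cdot,\cdot)^{1/2}$ by the Friedrichs inequality (\ref{eq:frie}), but in fact we only need $|P_nv|\lesssim\|v\|$, i.e. boundedness $V\to O$ with the $X$-norm on $V$; one should be slightly careful about which norm is meant on $V$ in statement $(iv)$, presumably the $X$-norm, in which case this step is trivial.

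Next, for $(iv)\Rightarrow\text{DF}$: take $v\in V_n$. The key identity is that $P_nv=v$ for $v\in V_n$ (since $a(P_nv-v,v')=0$ for all $v'\in V_n$ by (\ref{eq:defpn}), and $P_nv-v\in X_n$... actually $P_nv\in V_n$ and $v\in V_n$, so $P_nv=v$ by nondegeneracy of $a$ on $V_n$). Hmm, that gives nothing directly. Instead the right move is: for $v\in V_n$, consider $Pv\in V$. We have $a(v,v)=a(Pv,Pv)$ by (\ref{eq:app}), and $P_n(Pv)=P_n v=v$ because $a(P_n Pv-v,v')=a(Pv-v,v')=a(Pv,v')-a(v,v')$ and $a(Pv,v')=a(v,v')$ for all $v'\in X$... so indeed $P_n Pv=v$. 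Therefore $|v|=|P_nPv|\leq C\,(\text{norm of }Pv\text{ as element of }V)$. Using the Friedrichs inequality on $V$, the $V$-norm of $Pv$ is $\lesssim a(Pv,Pv)^{1/2}=a(v,v)^{1/2}$. Hence $|v|\lesssim a(v,v)^{1/2}$, which is DF (after squaring).

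Finally, $\text{DF}\Rightarrow(ii)$: this is the substantive part and the main obstacle. Fix $u\in X$ and write $u=Pu+w$ with $Pu\in V$, $w\in W$. Since $P_n$ only sees the $a$-form, $P_nu=P_n Pu$, and $Pu-P_nPu$ is the Galerkin error for the elliptic problem "find $z\in V_n$ with $a(z,v')=a(Pu,v')$ for all $v'\in X_n$" — which is $a$-orthogonal projection, hence quasi-optimal in the $a$-norm: $a(Pu-P_nu,Pu-P_nu)^{1/2}=\inf_{v'\in V_n}a(Pu-v',Pu-v')^{1/2}$. By Proposition \ref{prop:asv}(iii), $Pu\in V$ is approximable by elements of $V_n$ in the $X$-norm, hence in the $a$-norm, so the $a$-error tends to $0$. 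Then DF upgrades control of $a(v_n,v_n)$ to control of $\|v_n\|$: writing $v_n=Pu-P_nu\in X$ (not in $V_n$!), one cannot apply DF directly. The fix: $P_nu-Q_n(Pu)$ where $Q_n$ is the $V_n$-best approximation... more cleanly, note $P_nu - v'\in V_n$ for any $v'\in V_n$; apply DF to get $|P_nu-v'|^2\leq C\,a(P_nu-v',P_nu-v')$, combine with $a$-quasi-optimality and a triangle inequality to get $|Pu-P_nu|\to 0$; then recover $\|Pu-P_nu\|^2=|Pu-P_nu|^2+a(Pu-P_nu,Pu-P_nu)\to 0$. I expect the bookkeeping of "which error lives in which space" to be the only real subtlety; everything else is standard Céa-type argument plus Banach--Steinhaus.
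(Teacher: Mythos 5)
Your proof is correct and follows essentially the same route as the paper: the paper proves DF $\Rightarrow$ (ii) by the same Céa/quasi-optimality argument you sketch (comparing $P_nu$ with an approximant $v_n\in V_n$ of $Pu$, using DF to upgrade the $a$-seminorm bound to the $X$-norm), chains (ii) $\Rightarrow$ (iii) $\Rightarrow$ (iv) identically, and closes the loop with the identity $P_nPv=v$ for $v\in V_n$ together with $a(Pv,Pv)=a(v,v)$, exactly as you do — the only cosmetic difference being that the paper phrases (iv) $\Rightarrow$ DF contrapositively while you derive the Friedrichs bound directly.
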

\begin{proof}
(i) Suppose DF holds, and pick $u \in X$. Choose $v_n \in V_n$ converging to $Pu$ in $X $ (Proposition \ref{prop:asv}).
We have:
\begin{align}
\| P_n u - v_n \|^2 &\cleq a(P_n u - v_n, P_n u - v_n) = a(Pu - v_n, P_n u - v_n)\\
 &\cleq \|Pu - v_n\| \|P_n u - v_n\|,
\end{align}
which gives (quasi-optimal) convergence of $P_n u$ to $Pu$.

(ii) Pointwise convergence of $P_n$ implies uniform boundedness $X \to X$ by the principle. Uniform boundedness $X \to X$ implies uniform boundedness $V \to O$, of course.

(iii) Suppose DF does not hold. Pick a  subsequence $v_n \in V_n$ such that $|v_n|=1$ but $a(v_n, v_n) \to 0$. Since $a(P v_n, P v_n) = a(v_n,v_n)$, the sequence $(P v_n)$ converges to $0$ in $V$. On the other hand $P_n P v_n = v_n$, whose $O$-norm is $1$. 

This precludes uniform boundedness of $ P_n : V \to O$.
\end{proof}

We state similar results for $K_n$, the only subtlety being the behaviour of $K_n$ on $W$. This particular point will be further expanded upon in Remark \ref{rem:knw}.
\begin{proposition} \label{prop:dfunifbk} The following are equivalent:
\begin{itemize}
\item DF.
\item For any $u \in X'$, $K_n u$ converges to $Ku$ in $X$.
\item The $K_n$ are uniformly bounded $X' \to X$.
\item The $K_n$ are uniformly bounded $O \to O$.
\end{itemize}
\end{proposition}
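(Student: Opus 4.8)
The plan is to mirror the proof of Proposition~\ref{prop:dfunifb}, exploiting the decomposition $K_n u = P_n K u$ for $u \in V'$ (equation~(\ref{eq:knpnk})) together with the fact that $K_n$ kills the $W$-component only in a controlled way. The key observation is that every $u \in X'$ splits as $u = Pu + (I-P)u$ with $Pu \in V'$ and $(I-P)u \in W'$; since $K$ itself vanishes on $W$, the target $Ku = K(Pu)$ only sees the $V'$-part, whereas $K_n u$ a priori does not vanish on the $W$-part. So the first step is to untangle this: write $K_n u = K_n P u + K_n (I-P) u$, note $K_n P u = P_n K P u = P_n K u$ by (\ref{eq:knpnk}), and observe that $K_n(I-P)u \in V_n$ is characterized by $a(K_n(I-P)u, v) = \langle (I-P)u, v\rangle$ for $v \in V_n$, which is precisely $\langle (I-P)u, v - Q_n v\rangle$ where $Q_n$ is the $O$-orthogonal projection onto $W_n$ — this is small exactly when $V_n$ is ``almost orthogonal'' to $W$, a consequence of ADK, which by Proposition~\ref{prop:dfadk} follows from DF.

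Concretely, I would prove the cyclic chain (i)$\Rightarrow$(ii)$\Rightarrow$(iii)$\Rightarrow$(iv)$\Rightarrow$(i). For (i)$\Rightarrow$(ii): assume DF; by Proposition~\ref{prop:dfunifb} the $P_n$ are uniformly bounded and $P_n \to P$ pointwise on $X$. Given $u \in X'$, we have $K_n P u = P_n K u \to P K u = K u$ in $X$ since $K u \in V$. It remains to show $K_n (I-P) u \to 0$ in $X$; using DF and the characterization above, $\|K_n(I-P)u\|^2 \cleq a(K_n(I-P)u, K_n(I-P)u) = \langle (I-P)u, K_n(I-P)u - Q_n K_n(I-P)u\rangle$; but $K_n(I-P)u \in V_n$ so $K_n(I-P)u - Q_n K_n(I-P)u = K_n(I-P)u$ already (as $Q_n$ vanishes on $V_n$), which does not immediately help — instead I would argue that $K_n(I-P)u$, being bounded in $X$ (from DF plus boundedness of $\langle(I-P)u,\cdot\rangle$), has weakly convergent subsequences whose limits lie in $V$ by Proposition~\ref{prop:adknsc} (ADK holds by Proposition~\ref{prop:dfadk}), while also being $O$-orthogonal to every fixed $w \in W$ in the limit since $\langle K_n(I-P)u, w_n\rangle$ with $w_n \to w$, $w_n \in W_n$, equals... here one needs that $\langle (I-P)u, v\rangle$ tested against the $V_n$-part forces the limit to satisfy $a(\text{limit}, v) = \langle (I-P)u, Pv\rangle = 0$ for all $v \in V$, hence the limit is $0$; a standard subsequence argument then gives $K_n(I-P)u \to 0$ in $X$. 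The implications (ii)$\Rightarrow$(iii) and (iii)$\Rightarrow$(iv) are immediate: pointwise convergence gives uniform boundedness $X' \to X$ by the uniform boundedness principle, and uniform boundedness $X' \to X$ gives uniform boundedness $O \to O$ since $O \hookrightarrow X'$ and $X \hookrightarrow O$ continuously. Finally (iv)$\Rightarrow$(i): if DF fails, pick $v_n \in V_n$ with $|v_n| = 1$ and $a(v_n,v_n) \to 0$; then $K_n v_n$ satisfies $a(K_n v_n, v) = \langle v_n, v\rangle$ for $v \in V_n$, in particular $a(K_n v_n, v_n) = 1$, so by Cauchy--Schwarz $1 \leq a(K_n v_n, K_n v_n)^{1/2} a(v_n,v_n)^{1/2}$, forcing $a(K_n v_n, K_n v_n) \to \infty$; since $a(K_n v_n, K_n v_n) = |K_n v_n|^2 / \|K_n v_n\|^2 \cdot \|K_n v_n\|^2$ — more cleanly, by Friedrichs (\ref{eq:frie}) and $K_n v_n \in V_n$, we cannot conclude directly, so instead I would note $\|K_n v_n\| \to \infty$ while $|v_n| = 1$ bounded, contradicting uniform boundedness of $K_n : O \to O$ once one checks $|K_n v_n| \to \infty$ too — which follows because on the finite-dimensional $V_n$, $a$ and $|\cdot|^2$ are comparable with $n$-dependent constants, but the cleaner route is to test against $Pv_n$: since $a(v_n, v_n) = a(Pv_n, Pv_n) \to 0$ we get $Pv_n \to 0$ in $X$ hence in $O$, so $|v_n - Pv_n| \to 1$, and... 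I would ultimately imitate step (iii) of Proposition~\ref{prop:dfunifb}'s proof using $P_n Pv_n = v_n$ transported through $K_n$.

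The main obstacle is the $W$-component: unlike for $P_n$, the operator $K_n$ is \emph{not} simply $P_n K$ on all of $X'$, and the footnote in the paper explicitly flags that overlooking this caused an error elsewhere. So the heart of the argument is showing that $K_n (I-P)u \to 0$ (equivalently that the ``spurious'' action of $K_n$ on the $W$-part is asymptotically negligible), and that this convergence is precisely what DF — via ADK — buys us. I expect the last implication (iv)$\Rightarrow$(i) to also require some care to produce a genuine contradiction in the $O \to O$ norm rather than merely the $X' \to X$ or energy norm; the right move is to feed a DF-violating sequence $v_n$ through $K_n$ and use that $K_n$ restricted to $V_n$ is, in the relevant sense, $a$-self-adjoint and positive, so that $a(K_n v_n, v_n) = \langle v_n, v_n\rangle = 1$ pins down a lower bound on $\|K_n v_n\|$ that blows up, then relate this back to $|K_n v_n|$ using that one may instead test with a cleverly chosen unit $O$-vector. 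I would present (iv)$\Rightarrow$(i) by contraposition closely following the template already established in the excerpt, keeping the new content concentrated in (i)$\Rightarrow$(ii).
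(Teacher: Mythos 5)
Your overall plan --- split $u \in X'$ into its $V'$- and $W$-parts, handle the $V'$-part through $K_n = P_n K$ and Proposition~\ref{prop:dfunifb}, use ADK to control the $W$-part, and run a contradiction from a DF-violating sequence for the last implication --- is the same as the paper's, and (ii)$\Rightarrow$(iii)$\Rightarrow$(iv) is fine. But two steps are left with genuine gaps. In (i)$\Rightarrow$(ii), for the $W$-part: the insertion $\langle (I-P)u, v - Q_n v\rangle$ you write is vacuous, since $Q_n v = 0$ for $v \in V_n$; the productive insertion is on the \emph{other} slot. Given $w := (I-P)u \in W$, use ADK to pick $w_n \in W_n$ with $|w - w_n| \to 0$; since $K_n w \in V_n$ is $O$-orthogonal to $W_n$ one gets, together with DF,
\[
\| K_n w \|^2 \cleq a(K_n w, K_n w) = \langle w, K_n w\rangle = \langle w - w_n, K_n w\rangle \leq |w - w_n|\,\|K_n w\|,
\]
hence $\|K_n w\| \cleq |w - w_n| \to 0$. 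Your fallback subsequence argument, as written, only yields \emph{weak} convergence of $K_n w$ to $0$ in $X$; to upgrade to strong convergence you would still need to feed that weak limit back into $a(K_n w, K_n w) = \langle w, K_n w\rangle \to 0$ and invoke DF again, and you never state this. The direct estimate above is shorter and sidesteps the compactness machinery entirely.

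For (iv)$\Rightarrow$(i) you circle the target without landing it. The paper disposes of it in one line: $K_n$ is $O$-self-adjoint and nonnegative, so $\|K_n\|_{O\to O}$ is the reciprocal of the smallest nonzero discrete eigenvalue of $a$ on $X_n$, and uniform boundedness of the former is precisely DF. Your Rayleigh-quotient route also closes, but the missing move is to test the definition (\ref{eq:defkn}) with $v = K_n v_n$, which gives $a(K_n v_n, K_n v_n) = \langle v_n, K_n v_n\rangle \leq |v_n|\,|K_n v_n| = |K_n v_n|$; combined with your Cauchy--Schwarz bound $a(K_n v_n, K_n v_n) \geq 1/a(v_n,v_n) \to \infty$, this yields $|K_n v_n| \to \infty$ with $|v_n| = 1$, the contradiction you were after. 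Without one of these two observations the implication is not proved.
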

\begin{proof}
(i) Suppose DF holds. 

Pick $u \in V'$. Then $K_n u = P_n K u$ and $Ku \in X$, so by the preceding proposition $K_n u \to Ku$ in $X$.

Pick $u \in W$. By ADK (Proposition \ref{prop:dfadk}) we can choose a sequence $u_n \in W_n$ converging to $u$ in $W_n$. We have:
\begin{align}
\| K_n u \|^2 & \cleq a(K_nu, K_n u) = \langle u - u_n, K_n u \rangle\\
& \cleq |u - u_n| \, \|K_n u\|.
\end{align}
Hence $K_n u \to 0 = Ku$ in $X$.

This proves the second statement.

(ii) The second statement implies the third, which in turn implies the last.
 
(iv) When the last condition holds, the largest eigenvalue of $K_n$ is uniformly bounded, which implies DF.
\end{proof}

The above propositions show, as is well known, that DF is the key estimate for the convergence of the Galerkin method (\ref{eq:defkn}) for source problems. Turning to eigenvalue approximation, DF simply says that the smallest non-zero discrete eigenvalue of $a$ is bounded below, above zero. It is well known, since \cite{BofBreGas00}, that DF by itself is not enough to guarantee eigenvalue convergence. This has been the main reason for considering extra conditions, like DC \cite{Kik89}. 

To study the convergence of the spectral attributes (such as eigenvalues) of $K_n$, to those of $K$, a number of conditions have been used \cite{DunSch63,Kat80,Cha83}, in quite general settings that include unbounded operators.  The most convenient sufficient condition for us, appears to be: 
\begin{description}
\item[NC] We say Norm Convergence holds if $\|K - K_n\|_{O \to O} \to 0$.
\end{description}

In \cite{Osb75}, see \cite{BabOsb91} \S 7, it is shown directly that if NC holds, then for any non-zero eigenvalue $\lambda$ of $K$, any small enough disk around $\lambda$ in $\bbC$, the eigenvalues of $K_n$ inside it converge to $\lambda$, with corresponding convergence of eigenspaces in the sense of gaps. A key intermediate step is the norm convergence of the spectral projections associated with the disc, written as contour integrals. The argument easily extends to any contour in $\bbC$ that does not meet the spectrum of $K$ or enclose $0$.

In \cite{BofBreGas00} \S 5 a reciprocal is proved, for positive and injective $K$: a convergence of eigenvalues and spaces in the sense of gaps, implies NC.

Arguably DF is not necessary for eigenvalue convergence of the discretizations (\ref{eq:disceig}). Indeed one can imagine a scenario where some discrete nonzero eigenvalues of $a$ cluster around $0$, arbitrarily close as $n \to \infty$, with corresponding eigenspaces close to $W$, in the sense of gaps. This scenario need not be catastrophic in practice, for instance to someone who needs to identify resonances of an electromagnetic device. In the case where $W$ is finite dimensional this is even more patent: any approximating subspaces $X_n$ will yield a reasonable notion of eigenvalue convergence, even those that do not contain $W$. For such discretizations, $K_n$ is not even uniformly bounded in $O$. 

However our present theory is motivated by the situation where DF, and hence ADK, holds. \emph{We then take it for granted that eigenvalue and eigenspace convergence for (\ref{eq:disceig}), properly defined in terms of gaps, is equivalent to NC.}


In \cite{CaoFerRaf00} Definition 6.1, a notion of ``spurious free'' approximation is introduced. It is quite easily seen to imply DF. In Theorem 6.10 of that paper, it is shown that spurious free approximation is equivalent to DC (and AS). They use results of \cite{DesNasRap78I} which concern a weaker variant of NC, applicable when $K$ is bounded but not necessarily compact. 

We now prove the analogue result for us, namely that NC is equivalent to VG.

\begin{lemma}\label{lem:gapb}
We have:
\begin{equation}\label{eq:deltab}
\delta(V_n,V) \leq \|K_n\|_{W \to X}. 
\end{equation}
\end{lemma}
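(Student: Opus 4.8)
The plan is to estimate the gap $\delta(V_n,V)$ directly from its definition by constructing, for each nonzero $v\in V_n$, a good approximant in $V$ built out of the operator $K$. The natural candidate is $w := K(av) \in V$, where by $av$ I mean the element of $X'$ obtained by pairing $v$ against the bilinear form $a$, i.e. $\langle av, \cdot\rangle = a(v,\cdot)$ on $X$; since $a(v,\cdot)$ vanishes on $W$ we have $av \in V'$, so by the remarks following (\ref{eq:kudef}) the defining relation $a(Kav, z) = a(v,z)$ holds for all $z\in X$, which says precisely $Kav = Pv$. Thus for $v\in V_n$ the best approximant of $v$ in $V$ in the $X$-norm is controlled by $\|v - Pv\|$, and we are reduced to estimating $\|v - Pv\|$ in terms of $\|K_n\|_{W\to X}\,\|v\|$.

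First I would split $v - Pv$ using the discrete decomposition. Since $v\in V_n\subseteq X_n$ and $Pv\in V\subseteq X$, write $v - Pv = (v - P_n Pv) + (P_n Pv - Pv)$ — but a cleaner route: observe that $v = P_n v$ (as $v\in V_n$), so $v - Pv = P_n v - Pv$. Apply $P_n$ to $Pv\in X$: by (\ref{eq:defpn}), $a(P_n Pv - v, z) = a(Pv - v, z) = 0$ for all $z\in V_n$ — wait, this needs care. The key computation is to test the defining relation for $K_n$ against $W$. For $w\in W$ we have $\langle K_n(aw'), v\rangle$-type identities; more directly, I want to exhibit $v - Pv$, up to something small, as $K_n$ applied to an element of $W$ of norm $\lesssim \|v\|$. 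The element of $W$ in question is the $O$-orthogonal (equivalently $\langle\cdot,\cdot\rangle$-) component: since $Pv$ is the $O$-orthogonal projection of $v$ onto $\overline V$, the difference $v - Pv$ lies in $W$ as an element of $O$, but not of $X$ in general — however $v - Pv \in X$ here because both $v$ and $Pv$ are in $X$, hence $v - Pv \in W$. Then for all $z\in V_n$, $a(v - Pv, z) = a(v,z) - a(Pv,z) = a(v,z) - a(v,z) = 0$ using (\ref{eq:app}), so $v - Pv \perp_a V_n$; combined with $v-Pv\in W$ this is exactly the statement that $K_n$ maps the functional $z\mapsto \langle \,?\,,z\rangle$... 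Let me instead use: set $w := v - Pv \in W$. Then $K_n w \in V_n$ satisfies $a(K_n w, z) = \langle w, z\rangle$ for all $z\in V_n$. I claim $K_n w = v - P_n Pv$. Indeed $v - P_n Pv \in V_n$, and for $z\in V_n$: $a(v - P_n Pv, z) = a(v,z) - a(Pv,z) = a(v,z) - a(v,z) = \langle v - Pv, z\rangle$? No — $a(v,z) - a(P_n Pv, z) = a(v,z) - a(Pv,z)$ by (\ref{eq:defpn}), and $a(v,z) = a(Pv,z)$ by (\ref{eq:app}), giving $a(v - P_n Pv, z) = 0$, not $\langle w,z\rangle$. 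So that identification is wrong; I need to recompute and the honest relation is $a(v,z) - \langle Pv, z\rangle \ne 0$ in general. The correct claim should be: $v - P_n Pv = K_n(\iota w)$ where $\iota w$ is $w$ viewed in $X'$ via $\langle w,\cdot\rangle$ — but then $a(v - P_n Pv, z)$ must equal $\langle w,z\rangle = \langle v,z\rangle - \langle Pv,z\rangle = \langle v,z\rangle$ since $Pv\perp_O V_n$?? No, $Pv\in V$ is not $O$-orthogonal to $V_n$. So I need to be careful; the right bookkeeping is: $a(v,z) = \langle av, z\rangle$, and $av\in V'$ has $\langle av,z\rangle = \langle av, z - Q_n z\rangle$... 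This is the genuine technical core.

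The main obstacle, then, is getting the algebra of this one identity exactly right: isolating the piece of $v$ that $K_n$ "sees" as coming from $W$, showing it has $W$-norm bounded by a constant times $\|v\|$ (here one uses the Friedrichs inequality (\ref{eq:frie}) on $V$ and (\ref{eq:app}) to compare $\|Pv\|$ or $|v|$ with $a(v,v)^{1/2}$, plus continuity of the embedding $X\to O$), and then applying $\|K_n\|_{W\to X}$ to conclude $\|v - (\text{approximant in }V)\| \le \|K_n\|_{W\to X}\,\|v\|$. Once that identity is in hand, dividing by $\|v\|$ and taking the supremum over nonzero $v\in V_n$ yields (\ref{eq:deltab}) immediately. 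I would therefore spend the bulk of the write-up pinning down the decomposition $v = (\text{something in }V) + K_n(\text{something in }W)$ and verifying both defining variational relations, and treat the norm bounds as routine consequences of (\ref{eq:frie}), (\ref{eq:app}) and the continuity of $X\hookrightarrow O$.
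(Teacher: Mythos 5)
There is a genuine gap here: you assemble the right objects but never establish the single identity the proof hinges on, and you acknowledge as much when you stop at what you call the ``genuine technical core.'' You correctly take $Pv\in V$ as the approximant of $v\in V_n$, correctly observe that $w:=v-Pv$ lies in $W$, and correctly bring $K_nw$ into play. But you then search for an exact formula expressing $K_nw$ as a difference of projections, and the candidate you test, $K_nw=v-P_nPv$, cannot work: in fact $v=P_nPv$ for every $v\in V_n$ (both sides lie in $V_n$, and $a(v-P_nPv,z)=a(v,z)-a(Pv,z)=0$ for all $z\in V_n$ by (\ref{eq:defpn}) and (\ref{eq:app}), while $a$ is definite on $V_n$), so that identity would force $K_nw=0$. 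More generally, no decomposition $v=(\text{element of }V)+K_n(\text{element of }W)$ exists, since $v-Pv\in W$ whereas $K_n$ takes values in $V_n$ and $V_n\cap W=0$.

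The missing idea is not a decomposition but a duality computation: test the defining relation (\ref{eq:defkn}) of $K_n(v-Pv)$ against $z=v\in V_n$ itself. Since $\langle v-Pv,Pv\rangle=0$, this gives $|v-Pv|^2=\langle v-Pv,v\rangle=a(K_n(v-Pv),v)$; Cauchy--Schwarz for $a$ bounds this by $\|K_n(v-Pv)\|\,\|v\|\le\|K_n\|_{W\to X}\,|v-Pv|\,\|v\|$, and cancelling one factor of $|v-Pv|$ (which equals $\|v-Pv\|$ because $a$ vanishes on $W$) yields $\|v-Pv\|\le\|K_n\|_{W\to X}\|v\|$, hence (\ref{eq:deltab}) with constant exactly $1$. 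Note also that your closing plan --- treating the norm bounds as ``routine consequences'' of (\ref{eq:frie}), (\ref{eq:app}) and the embedding $X\hookrightarrow O$ --- could not succeed even in principle: those tools introduce constants, whereas the lemma has none, and without the quadratic identity above there is no way to cancel the factor $|v-Pv|$ that applying $\|K_n\|_{W\to X}$ produces.
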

\begin{proof}
For any $v \in V_n $ we have:
\begin{align}
|v -Pv|^2 &= \langle v - Pv, v \rangle,\\
&= a(K_n(v-Pv), v),\\
&\leq \|K_n(v - Pv)\| \, \|v\|,\\
&\leq \|K_n \|_{W \to X} \,|v-Pv | \, \|v\|.
\end{align}
We deduce:
\begin{equation}
\|v-Pv\| \leq \|K_n \|_{W \to X}  \| v\|.
\end{equation}
This proves the claim.
\end{proof}

\begin{proposition}\label{prop:equiv}
If DF holds then:
\begin{equation}\label{eq:convv}
\|K -K_n\|_{\overline V \to X} \to 0,
\end{equation}
and:
\begin{equation}\label{eq:deltaequiv}
\|K_n\|_{W \to X} \simeq \delta(V_n,V),
\end{equation}
\end{proposition}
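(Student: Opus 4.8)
The plan is to prove the two assertions of Proposition~\ref{prop:equiv} in turn, leaning on the machinery already assembled. For \eqref{eq:convv}, the strategy is to split any element of $\overline V$ into a ``nice'' part and a ``small'' part. Given $v \in \overline V$ with $|v| \leq 1$, decompose $v = Kg + r$ where $g$ ranges over the finite sum of eigenspaces of $K$ attached to the largest eigenvalues (so $Kg$ lies in the smooth part of $V$) and $|r|$ is as small as we like, uniformly: this is possible because the eigenspaces of $K$ attached to nonzero eigenvalues have dense span in $V$, hence $\overline V = \overline{KV'}$ in $O$. On the $Kg$ part, $K - K_n$ is controlled by Proposition~\ref{prop:dfunifbk} (pointwise $X'\to X$ convergence, hence uniform convergence on the compact image of the unit ball of the fixed finite-dimensional eigenspace, which is the familiar ``pointwise convergence plus precompactness gives uniform convergence'' argument). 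On the remainder $r$, DF gives $\|K_n\|_{O\to O}\preccurlyeq 1$ by Proposition~\ref{prop:dfunifbk} again, and likewise $\|K\|_{O\to O}$ is bounded, so $\|(K-K_n)r\|$ is small both because $|r|$ is small and because we can bound the $X$-norm: actually one needs $\|(K-K_n)r\|_X$, so one should instead use that $K, K_n$ are uniformly bounded $O \to X$ — for $K$ this is the compactness hypothesis, for $K_n$ it follows from $K_n = P_n K$ on $V'$ together with Proposition~\ref{prop:dfunifb}, plus the $W$-part estimate from the proof of Proposition~\ref{prop:dfunifbk}. Chaining $\epsilon/2 + \epsilon/2$ across the two pieces, uniformly in $n$ for $n$ large, yields \eqref{eq:convv}.

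For \eqref{eq:deltaequiv}, one inequality is already Lemma~\ref{lem:gapb}: $\delta(V_n,V) \leq \|K_n\|_{W\to X}$. For the reverse, I would take $w \in W$ with $|w| \leq 1$ (note $\|K_n w\| = \|K_n(w - w')\|$ for any $w' \in W_n$ since $K_n$ kills $W_n\subseteq W$, and also $K_n w = K_n(w - Pw)\cdots$ — more to the point, estimate $\|K_n w\|$ by testing its definition). Write $v = K_n w \in V_n$; then for all $v' \in V_n$, $a(v,v') = \langle w, v'\rangle = \langle w, v' - Pv'\rangle$ since $w \perp \overline V$. Taking $v' = v$ gives $a(v,v) = \langle w, v - Pv\rangle \leq |v - Pv| \leq \|v - Pv\|_X \leq \delta(V_n,V)\|v\|_X$. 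Combined with DF ($\|v\|_X^2 \preccurlyeq a(v,v)$, using \eqref{eq:frie} and the equivalence of norms on $V_n$ that DF provides), this gives $\|v\|_X \preccurlyeq \delta(V_n,V)$, i.e. $\|K_n w\|_X \preccurlyeq \delta(V_n,V)$ uniformly in $w$ on the unit ball of $W$, which is exactly $\|K_n\|_{W\to X} \preccurlyeq \delta(V_n,V)$.

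The main obstacle I anticipate is the first assertion \eqref{eq:convv}, specifically getting the \emph{$X$-norm} (not merely $O$-norm) convergence uniformly over the unit ball of $\overline V$. The eigenspace-truncation argument is clean for the smooth part, but for the remainder $r$ one must be careful: it is not enough that $|r|$ is small; one needs $\|(K - K_n)r\|_X \leq (\|K\|_{O\to X} + \|K_n\|_{O\to X})\,|r|$ with the operator norms $O \to X$ bounded uniformly in $n$. That uniform $O\to X$ bound on $K_n$ is where DF is essential and where the $W$-component of $K_n$ must be handled via ADK (which DF implies, Proposition~\ref{prop:dfadk}) exactly as in part (i) of the proof of Proposition~\ref{prop:dfunifbk}. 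Once that bound is in hand the rest is routine $\epsilon$-chasing.
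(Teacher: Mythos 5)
Your argument for \eqref{eq:deltaequiv} is correct and is essentially the paper's own: one inequality is Lemma~\ref{lem:gapb}, and for the other you test the definition of $K_n w$ with $v' = K_n w$, exploit $w \perp \overline V$ to insert $I-P$, and then close the loop with DF. No issues there.

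The argument for \eqref{eq:convv}, however, has a genuine gap, and it is precisely the one you flag as the ``main obstacle'' but do not actually resolve. You decompose the \emph{input} $v \in \overline V$, $|v|\leq 1$, as $v = Kg + r$ with $g$ ranging over a fixed finite sum of eigenspaces, and claim that $|r|$ can be made small \emph{uniformly} over the unit ball of $\overline V$. This is false: $\overline V$ is infinite-dimensional (there are infinitely many eigenvalues of $a$), so its $O$-unit ball is not precompact, and density of $\bigcup_N E_N$ in $\overline V$ gives only pointwise, not uniform, approximation. Concretely, if $\{\phi_i\}$ is an eigenbasis of $K|_{\overline V}$ then $v=\phi_{N+1}$ has $|v|=1$ and $r=v$, so $|r|$ cannot be driven to $0$. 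Your subsequent bound $\|(K-K_n)r\|_X \leq (\|K\|_{O\to X}+\|K_n\|_{O\to X})\,|r|$, with the operator norms uniformly controlled, is then vacuous because $|r|$ is not small. What \emph{is} uniformly small in the tail is $\|Kr\|_X$, since $K$ has decaying eigenvalues and maps into $X$; equivalently, the precompact set you want to cover is $K(\text{unit ball})$ in $X$, not the unit ball of $\overline V$ in $O$. One then estimates $\|(K-K_n)r\|_X = \|(I-P_n)Kr\|_X \cleq \|Kr\|_X$, using the uniform $X$-boundedness of $P_n$ from Proposition~\ref{prop:dfunifb}. The paper sidesteps all of this by applying Lemma~\ref{lem:pointnorm} directly with $K:O\to X$ compact and $A_n=P_n$ uniformly bounded and pointwise convergent to $P$, then identifying $K_n=P_nK$ on $\overline V$ via \eqref{eq:knpnk}. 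Your truncation is an attempt to unroll that lemma, but it truncates on the wrong side of $K$; with the fix above it would go through, essentially re-proving the lemma in the special case where the compact operator is self-adjoint.
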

\begin{proof}
By DF, the $P_n$ are projectors $X \to V_n$ which converge pointwise to $P$ (Proposition \ref{prop:dfunifb}). Since $K : O \to X$ is compact, Lemma \ref{lem:pointnorm} in the appendix then gives $\|K - P_nK\|_{O \to X} \to 0$, which gives (\ref{eq:convv}) since $K_n$ and $P_nK$ coincide on $\overline V$, see (\ref{eq:knpnk}).

Pick $u \in W$. We have:
\begin{align}
a(K_nu,K_nu) & = \langle u , K_n u \rangle,\\
& = \langle u, (I-P)K_n u \rangle,\\
& \leq |u| \, \delta (V_n,V) \, \| K_n u\|.
\end{align}
Since we also have, by DF:
\begin{equation}
\| K_n u\|^2 \cleq a(K_nu,K_nu),
\end{equation}
we get:
\begin{equation}
\| K_n u \| \cleq \delta (V_n,V)\, |u|.
\end{equation}
Combined with (\ref{eq:deltab}) this gives (\ref{eq:deltaequiv}).

\end{proof}

\begin{proposition}\label{prop:oconvvg}
The following are equivalent:
\begin{itemize}
\item VG.
\item $\|K - K_n\|_{O \to X}$ tends to $0$.
\item NC.
\end{itemize}

\end{proposition}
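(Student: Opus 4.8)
The plan is to prove the three-way equivalence by showing $\textrm{VG} \Rightarrow \|K-K_n\|_{O\to X}\to 0 \Rightarrow \textrm{NC} \Rightarrow \textrm{VG}$, using the machinery already assembled. The first implication is where most of the work goes, and it splits according to the decomposition $O = \overline V \oplus W$. On $\overline V$: since VG implies DF (Proposition \ref{prop:vgdf}), Proposition \ref{prop:equiv} already gives $\|K - K_n\|_{\overline V \to X}\to 0$ via (\ref{eq:convv}). On $W$: again by VG $\Rightarrow$ DF, the equivalence (\ref{eq:deltaequiv}) of Proposition \ref{prop:equiv} gives $\|K_n\|_{W\to X} \simeq \delta(V_n,V)$, and since $K$ vanishes on $W$ this says exactly that $\|K - K_n\|_{W\to X} = \|K_n\|_{W\to X} \simeq \delta(V_n,V)\to 0$ under VG. Combining the two pieces using the orthogonal splitting of $O$ yields $\|K - K_n\|_{O\to X}\to 0$.

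The second implication is immediate from the continuous inclusion $X\to O$: $\|K - K_n\|_{O\to O} \cleq \|K - K_n\|_{O\to X}$, so convergence in the stronger operator norm forces NC.

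For the third implication, NC $\Rightarrow$ VG, I would first note that NC implies DF: NC gives uniform boundedness of $K_n$ in $O\to O$ (they converge to $K$ in that norm, hence are bounded), which is one of the equivalent forms of DF in Proposition \ref{prop:dfunifbk}. Now that DF is available, I can again invoke (\ref{eq:deltaequiv}) from Proposition \ref{prop:equiv}: $\delta(V_n,V) \simeq \|K_n\|_{W\to X}$. So it suffices to show $\|K_n\|_{W\to X}\to 0$. Pick $u_n\in W$ with $|u_n|\le 1$; since $K u_n = 0$, we have $\|K_n u_n\|_X = \|(K_n - K)u_n\|_X$. The subtlety is that NC only controls the $O\to O$ norm, not $O\to X$, so I cannot directly bound this. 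Instead I would argue: $K_n u_n \in V_n$, and by DF, $\|K_n u_n\|_X^2 \cleq a(K_n u_n, K_n u_n) = \langle u_n, K_n u_n\rangle = \langle u_n, (I-P)K_n u_n\rangle \le |u_n|\,|(I-P)K_n u_n|$. Now $(I-P)K_n u_n = (I-P)(K_n - K)u_n$ since $(I-P)K u_n = 0$ and also $PK_n u_n$... actually more simply $|(I-P)K_n u_n| \le |K_n u_n - K u_n| = |(K_n-K)u_n| \le \|K_n - K\|_{O\to O}$ using $K u_n \in \overline V$ so $(I-P)K u_n = 0$ — wait, I need $(I-P)K_n u_n$, and $(I-P)$ is a contraction in $O$, so $|(I-P)K_n u_n| = |(I-P)(K_n u_n - K u_n)| \le |K_n u_n - K u_n|$. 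Thus $\|K_n u_n\|_X^2 \cleq \|K_n - K\|_{O\to O}\to 0$ uniformly over such $u_n$, giving $\|K_n\|_{W\to X}\to 0$ and hence VG.

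The main obstacle I anticipate is the NC $\Rightarrow$ VG direction, specifically bridging the gap between the weak ($O\to O$) norm that NC controls and the strong ($\cdot\to X$) norm appearing in the gap $\delta(V_n,V)$. The trick that resolves it is the elliptic-regularity-type estimate furnished by DF — one pays with the square root but upgrades $O$-control to $X$-control — combined with the orthogonality $(I-P)K = 0$ on all of $O$, which lets me replace $K_n u_n$ by $(K_n - K)u_n$ inside the $(I-P)$ projection. Everything else reduces to assembling Propositions \ref{prop:vgdf}, \ref{prop:dfunifbk} and \ref{prop:equiv}.
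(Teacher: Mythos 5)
Your proof is correct and follows essentially the same route as the paper: Proposition \ref{prop:equiv} handles VG~$\Rightarrow$~$\|K-K_n\|_{O\to X}\to 0$, the continuous inclusion $X\to O$ gives the middle implication, and Lemma \ref{lem:gapb} closes the loop once $\|K_n\|_{W\to X}\to 0$ is established. The only (harmless) detour is in NC~$\Rightarrow$~VG: you invoke DF to get $\|K_nu\|^2\cleq a(K_nu,K_nu)$, whereas the paper bypasses DF entirely by using the identity $\|K_nu\|^2=|K_nu|^2+a(K_nu,K_nu)$ together with $|K_nu|=|(K_n-K)u|\leq\|K_n-K\|_{O\to O}|u|$, which already yields the uniform decay of $\|K_nu\|$.
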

\begin{proof}
(i) If VG holds then DF holds. Applying Proposition \ref{prop:equiv} shows that $\|K - K_n\|_{O \to X}$ tends to $0$.

(ii) We have:
\begin{equation}
\|K - K_n\|_{O \to O} \leq \|K - K_n\|_{O \to X}.
\end{equation}

(iii) Suppose that $\|K - K_n\|_{O \to O} \to 0$.\\
For $u \in W$ we have:
\begin{equation}  
a(K_n u, K_nu) = \langle u, K_nu \rangle  \leq |u|\, |K_n u|.
\end{equation}
Therefore we have:
\begin{equation}
\|K_n \|_{W \to X} \to 0.
\end{equation}
By Lemma \ref{lem:gapb}, VG holds.
\end{proof}

\begin{remark}\label{rem:knw}
When DF holds, Proposition \ref{prop:equiv} shows that the question of eigenvalue convergence can be stated entirely in terms of the behaviour of $K_n$ on $W$, as a distinction between pointwise and uniform convergence of $K_n$. For any $u$ in $W$, $K_n u \to 0$ in $O$ (Proposition \ref{prop:dfunifbk}), but eigenvalue convergence holds iff $\|K_n\|_{ W \to O}$ tends to $0$. 
\end{remark}

\subsection{Summary} The following diagram represents the main implications proved in this section. The numbers refer to propositions.
\begin{equation}\label{eq:summary}
\xymatrix{
ODF \ar@{=>}[d]^{\ref{prop:odfdc}}\ar@{<=>}[r]^{\ref{prop:odfgap}} &SG  \\
DC \ar@{<=>}[r]^{\ref{prop:dcvg}} & VG \ar@{<=>}[r]^{\ref{prop:oconvvg}}  \ar@{=>}[d]^{\ref{prop:vgdf}} & NC\\
& DF \ar@{=>}[d]^{\ref{prop:dfadk}}\ar@{<=>}[r]^{\ref{prop:dfunifbk}} &PC   \\
&ADK
}
\end{equation}
PC stands for Pointwise Convergence of the solution operators $K_n$, as in the second point of Proposition \ref{prop:dfunifbk}. SG stands for the  Small Gap condition for $V_n$ in $O$, considered in Proposition \ref{prop:odfgap}. 

\section{Compatible operators and convergence criteria\label{sec:projections}}
 
In the finite element setting the most useful convergence results for variational discretizations are derived from projection operators such as those associated with the degrees of freedom. A key part of our investigation is therefore to relate the convergence criteria discussed above to the existence of such operators. For this purpose we have found it useful to introduce the following notion of a compatible operator.

\begin{definition}
We say that an operator $Q_n :X \to X_n$ is \emph{compatible} with $X_n$, if:
\begin{itemize}
\item it maps $W$ into $W_n$,
\item for any $u \in X_n$, $u - Q_n u \in W_n$.
\end{itemize}
 A sequence of operators $Q_n$ is compatible if for each $n$, $Q_n$ is compatible with $X_n$. 
\end{definition}
For instance the $P_n$ defined by (\ref{eq:defpn}) are compatible operators. They have range $V_n$ and map $W$ to $0$. Also, projections onto $X_n$ satisfying commuting diagrams, as will be detailed in \S \ref{sec:hodge} below, are compatible. They have range $X_n$ and project $W$ onto $W_n$. 

For projections $Q_n$ onto $X_n$, the second condition in the above definition of compatibility is trivially satisfied.

\begin{remark}\label{rem:fortin}
In the discussion of \cite{Bof10}, so-called Fortin operators play a central role (see \S 14). They are operators $Q_n: X \to X_n$ such that:
\begin{equation}
\forall u \in X \ \forall v \in V_n \quad a(u - Q_n u, v) = 0,
\end{equation}
which then holds for all $v \in X_n$. Thus $P_n$ is the only Fortin operator with range $V_n$. Any Fortin operator is compatible in the above sense but the reciprocal is false~: In the context of $\curl$ problems a Fortin operator would need to map the $\curl$ according to the $\rmL^2$ projection. In general, commuting projections don't have this property.
\end{remark}

To deduce convergence results from compatible operators, norm properties are necessary. We state:
\begin{definition}
\begin{description}
\item[FCO] Friedrichs Compatible Operators are compatible sequences of operators which are uniformly bounded $X \to X$.
\item[ANCO] Aubin-Nitsche Compatible Operators are compatible operators $Q_n$ such that $\|\id - Q_n\|_{V \to O}$ converges to $0$. In other words ANCO states that for some $\epsilon_n \to 0$ we have:
\begin{equation}\label{eq:anest}
\forall v \in V \ \forall n \quad |v - Q_nv | \leq \epsilon_n \| v\|.
\end{equation}
\item[TCO] Tame Compatible Operators are compatible operators which are uniformly bounded $O \to O$. We say that TCO holds whenever such operators exist.
\end{description}
\end{definition}
We have already encountered uniform boundedness $X \to X$, as in FCO, in Proposition \ref{prop:dfunifb}, where it was related to DF. The kind of norm convergence stipulated by ANCO corresponds to the so-called Fortid property of \cite{BofBreGas97}, see \cite{Bof10} \S 14 (the Fortin operator should converge to the identity in a certain norm). The construction of commuting projections that are tame in the sense of being uniformly bounded $O \to O$, is a more recent tool for the analysis of mixed finite elements, see \S \ref{sec:hodge} below for references. They appear to be a convenient, but not necessary, tool for proving eigenvalue convergence.

The following result can be compared with \S 3.3 in \cite{ArnFalWin10}, which treats commuting projections for Hilbert complexes.
\begin{proposition}\label{prop:dfksp}
The following are equivalent:
\begin{itemize}
\item DF.
\item The projectors $P_n$ are FCO.
\item There exist FCO. 
\end{itemize}
\end{proposition}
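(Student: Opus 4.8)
The statement is an equivalence of three conditions, and the natural strategy is a cycle of implications: DF $\Rightarrow$ ($P_n$ are FCO) $\Rightarrow$ (there exist FCO) $\Rightarrow$ DF. The middle implication is vacuous, since the $P_n$ \emph{are} a compatible sequence (they map $W$ to $0 \subseteq W_n$ and, being projectors onto $V_n \subseteq X_n$, satisfy $u - P_n u \in W_n$ for $u \in X_n$ because $P_n u = u$ then... more carefully, for $u \in X_n$ one has $u - P_n u \in W_n$ by the very definition of the splitting $X_n = V_n \oplus W_n$ together with $P_n$ acting as identity on $V_n$). So the content is in the first and last implications.

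First I would prove DF $\Rightarrow$ ($P_n$ are FCO). The compatibility of the $P_n$ is already recorded in the text right after the definition of compatible operator, so only the uniform bound $X \to X$ is needed. But this is exactly the equivalence of DF with ``the $P_n$ are uniformly bounded $X \to X$'' established in Proposition~\ref{prop:dfunifb}. So this implication is immediate by citing Proposition~\ref{prop:dfunifb}.

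For the converse, (there exist FCO) $\Rightarrow$ DF, I would start from a compatible sequence $Q_n : X \to X_n$ with $\|Q_n\|_{X \to X} \leq C$ uniformly, and aim to bound $|v|^2$ by $a(v,v)$ for $v \in V_n$. The key idea is to feed $Pv \in V$ into $Q_n$: since $Q_n$ maps $W$ to $W_n$ and... wait, $Pv$ is in $V$, not $W$, so I need a different hook. The right move: for $v \in V_n$, write $v = Q_n v + (v - Q_n v)$ with $v - Q_n v \in W_n$ by the second compatibility property; hence $Q_n v - v \in W_n \subseteq W$, so $a(Q_n v, \cdot) = a(v, \cdot)$ on $X$, i.e. $P Q_n v = P v$. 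Now estimate using $Pv = P Q_n v$ and the Friedrichs inequality \eqref{eq:frie} on $V$. Actually the cleaner route: apply $Q_n$ to $Pv$. Since $v \in V_n$ and $v - Pv \in W$ (as $P$ kills the $W$-component and $v - Pv$ is $a$-null... no: $v - Pv$ need not be in $X$ unless $v \in X$, but $v \in V_n \subseteq X$, and $Pv \in V \subseteq X$, so $v - Pv \in X$; moreover $a(v - Pv, \cdot) = 0$ by the characterization of $P$, so $v - Pv \in W$). Then $Q_n(v - Pv) \in W_n$, and $v - Q_n(v-Pv) \in V_n + W_n$; more usefully $Q_n v = Q_n(Pv) + Q_n(v - Pv)$ where the last term is in $W_n$. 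Since $v - Q_n v \in W_n$ too, we get $v - Q_n(Pv) \in W_n$, hence $v - Q_n(Pv)$ is $O$-orthogonal to $V_n$, while it lies in... I would then decompose $v = [v - Q_n(Pv)] + Q_n(Pv)$, note the first bracket is in $W_n$ so $O$-orthogonal to $v$? No — $v \in V_n$ is $O$-orthogonal to $W_n$, so $\langle v, v - Q_n(Pv)\rangle = 0$, giving $|v|^2 = \langle v, Q_n(Pv)\rangle \leq |v|\,|Q_n(Pv)| \leq |v|\, \|Q_n(Pv)\| \leq C|v|\,\|Pv\|$. Then $|v| \leq C\|Pv\| \cleq a(Pv,Pv)^{1/2} = a(v,v)^{1/2}$ using \eqref{eq:frie} and \eqref{eq:app}. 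This yields DF.

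\textbf{Main obstacle.} The only delicate point is the bookkeeping showing $v - Q_n(Pv) \in W_n$ for $v \in V_n$: it requires combining the two compatibility axioms (that $Q_n$ maps $W$ into $W_n$, applied to $v - Pv \in W$, and that $u - Q_n u \in W_n$ for $u \in X_n$, applied to $u = v$) and the fact that $v - Pv \in W$, which itself uses the characterization of $P$ via $a(Pu, \cdot) = a(u,\cdot)$ on $X$. Once this inclusion is in hand, the norm estimate is a routine two-line chain using $O$-orthogonality of $V_n$ and $W_n$, the uniform bound, Friedrichs \eqref{eq:frie}, and the identity \eqref{eq:app}. I expect no real difficulty beyond being careful that all the vectors involved genuinely lie in $X$ (they do, since $V_n, W_n \subseteq X_n \subseteq X$ and $P$ preserves $X$).
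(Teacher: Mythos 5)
Your proposal is correct and follows essentially the same route as the paper: the first two equivalences are dispatched by citing Proposition~\ref{prop:dfunifb} together with the compatibility of the $P_n$, and the implication from the existence of FCO to DF rests on exactly the paper's decomposition $v - Q_n(Pv) = (v - Q_n v) + Q_n(v - Pv) \in W_n$, followed by orthogonality of $V_n$ and $W_n$, the uniform bound, Friedrichs~\eqref{eq:frie} and the identity~\eqref{eq:app}. The only cosmetic difference is that you use $O$-orthogonality to get $|v| \leq \|Q_n(Pv)\|$ where the paper uses $X$-orthogonality to get $\|v\| \leq \|Q_n(Pv)\|$; both yield DF.
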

\begin{proof}
In view of Proposition \ref{prop:dfunifb} we only need to show that the last condition implies the first.
Suppose that $Q_n$ are FCO. Then for any $u \in V_n$ we have:
\begin{equation}
u - Q_n \myP u = (u - Q_n u) + Q_n(u-\myP u)\in W_n,
\end{equation}
so that $u - Q_n \myP u$ and  $u$ are orthogonal. We deduce:
\begin{equation}
\| u \|  \leq \| Q_n \myP u \| \cleq \| \myP u\|,
\end{equation}
hence:
\begin{equation}
|u|^2 \leq \|u\|^2 \cleq a(Pu,Pu) = a(u,u),
\end{equation}
which is  DF.
\end{proof}

The following result can be compared with \S 4 in \cite{BofBreGas97} which treats eigenvalue problems in saddlepoint form. That setting is more general than ours, but the analysis relies on additional notions not explicitly introduced here (weak and strong approximability).
\begin{proposition} \label{prop:vgancp} The following are equivalent:
\begin{itemize}
\item VG. 
\item The projectors $P_n$ are ANCO.
\item There exist ANCO.
\end{itemize}
\end{proposition}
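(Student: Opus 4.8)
The structure mirrors that of Proposition \ref{prop:dfksp}: since VG implies DF (Proposition \ref{prop:vgdf}) and DF implies that the $P_n$ converge pointwise to $P$ in $X$ (Proposition \ref{prop:dfunifb}), the implications ``$P_n$ are ANCO'' $\Rightarrow$ ``there exist ANCO'' are trivial, so the real work is (a) VG $\Rightarrow$ the $P_n$ are ANCO, and (b) the existence of ANCO $\Rightarrow$ VG. I would organize the proof as these two implications, with (a) going via the triangle inequality splitting $|v - P_n v|$ through a good approximant, and (b) going via the gap estimate of Lemma \ref{lem:gapb} combined with an argument showing that $\|K_n\|_{W \to X} \to 0$.

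For (a): assume VG. Then DF holds, so we have quasi-optimality of $P_n$ in $X$ (Proposition \ref{prop:dfunifb}). Fix $v \in V$. Write, for any $v_n \in V_n$,
\begin{equation}
|v - P_n v| \leq |v - v_n| + |v_n - P_n v|,
\end{equation}
and note $v_n - P_n v = v_n - P_n v_n + P_n(v_n - v)$; since $P_n$ is a projection onto $V_n$ fixing $V_n$, this is just $P_n(v_n - v)$, whose $X$-norm is controlled by DF uniformly. So it remains to bound $|v - v_n|$ in $O$ for a well-chosen $v_n \in V_n$. The natural choice is $v_n = P_n v$ itself — but that is circular — so instead I would use Proposition \ref{prop:asv}(iii) to pick $v_n \to v$ in $X$; this gives a \emph{sequence-dependent} rate, not the uniform $\epsilon_n$ required. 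The cleaner route is to note that under DF, $\|\id - P_n\|_{V \to O}$ is exactly the relevant quantity, and to relate it directly to the gap: for $v \in V$, $P v = v$, and one can use the identity $v - P_n v \perp_X V_n$ together with compactness of $V \to O$. Concretely, I expect the key inequality to come from: for $v \in V$ with $\|v\|\le 1$, choosing $w_n \in V_n$ realizing the gap from $V_n$ to $V$ applied to $P_n v$, one gets $\|P_n v - P w_n'\|$ small where $w_n' \in V$; combined with $v = Pv$ and quasi-optimality this should yield $|v - P_n v| \cleq \delta(V_n,V) + (\text{compact tail})$. Honestly, the slickest proof of (a) is probably: ANCO for $P_n$ says $\|\id - P_n\|_{V\to O} \to 0$; by Lemma \ref{lem:pointnorm} in the appendix (pointwise-to-norm convergence against a compact operator) applied to the compact inclusion $V \to O$ and the uniformly bounded (by DF) operators $P_n|_V$, pointwise convergence $P_n v \to v$ in $O$ upgrades to norm convergence $\|\id - P_n\|_{V \to O} \to 0$. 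Pointwise convergence in $O$ follows from pointwise convergence in $X$ (Proposition \ref{prop:dfunifb}) since the inclusion $X \to O$ is continuous. That is the argument I would write.

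For (b): suppose ANCO holds, with operators $Q_n$ and rate $\epsilon_n \to 0$ as in (\ref{eq:anest}). I want $\delta(V_n, V) \to 0$. Fix $v \in V_n$ with $\|v\| = 1$. Since $Q_n$ is compatible, $v - Q_n v \in W_n \subseteq W$, hence $P(v - Q_n v) = 0$, i.e. $P Q_n v = P v$; also $Q_n v \in X_n$. I would like to compare $v$ with an element of $V$. The element $P Q_n v = Pv$ lies in $V$, and
\begin{equation}
\|v - Pv\| = \|v - P Q_n v\| \leq \|v - Q_n v\| + \|Q_n v - P Q_n v\|.
\end{equation}
The second term is $\|(\id - P)Q_n v\|$; writing $Q_n v = Pv + (\id - P)Q_n v$ and noting $(\id-P)Q_n v \in W$, this term equals $\|(\id-P)Q_n v\|$, which we still need to control — and here the subtlety is that ANCO gives an $O$-estimate on $V$, not an $X$-estimate everywhere. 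The right move, I believe, is to go through Lemma \ref{lem:gapb} instead: show $\|K_n\|_{W \to X} \to 0$. For $u \in W$ and $v = K_n u \in V_n$,
\begin{equation}
a(K_n u, K_n u) = \langle u, K_n u \rangle = \langle u, K_n u - Q_n K_n u \rangle,
\end{equation}
using that $K_n u - Q_n K_n u \in W_n$ and $u \in W$ is... no, $\langle u, w_n\rangle$ need not vanish for $u, w_n \in W$. So that does not immediately work either. Instead: since $K_n u \in V_n$ and (by a short argument) ANCO forces DF — indeed ANCO compatible operators, being compatible, satisfy the orthogonality $u - Q_n Pu \in W_n$ for $u \in V_n$ exactly as in the proof of Proposition \ref{prop:dfksp}, giving $\|u\| \le \|Q_n P u\|$, and ANCO controls $|u - Q_n Pu|$... — one checks ANCO $\Rightarrow$ DF, so $K_n$ is uniformly bounded and $\|K_n u\| \cleq a(K_n u, K_n u)^{1/2}$. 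Then $a(K_n u, K_n u) = \langle u, K_n u\rangle = \langle Pu + (\id-P)u, K_n u\rangle = \langle (\id - P)u, K_n u \rangle + \langle Pu, K_n u \rangle$; here $Pu = 0$ since $u \in W$, leaving $a(K_n u, K_n u) = \langle u, K_n u \rangle$. Now $K_n u = P_n K_n u$... this is where I would instead directly mimic Proposition \ref{prop:equiv}: $a(K_n u, K_nu) = \langle u, (\id - P)K_n u\rangle \le |u|\,\delta(V_n,V)\,\|K_n u\|$, which with DF gives $\|K_n\|_{W\to X} \cleq \delta(V_n,V)$ — the wrong direction.

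**The main obstacle.** The genuine difficulty, and where I would spend the most care, is implication (b): extracting a \emph{uniform} gap bound from the \emph{sequence-rate} $\epsilon_n$ in ANCO. I expect the resolution to be: first establish ANCO $\Rightarrow$ DF (hence all of Proposition \ref{prop:dfunifb} and the fact that $P_n K$ agrees with $K_n$ on $\overline V$, and that $\|K - K_n\|_{\overline V \to X} \to 0$ by Proposition \ref{prop:equiv}); then for $v \in V_n$ with $\|v\|=1$, use $Q_n$ on a \emph{fixed} near-optimal approximant — precisely, write $v = v - P_n(\text{best approx of }Pv) + \dots$ — no. The honest slick argument: for $v \in V_n$, $|v - Pv|^2 = \langle v - Pv, v\rangle = a(K_n(v - Pv), v)$ as in Lemma \ref{lem:gapb}; the vector $u := v - Pv \in W$ (wait, $v \notin V$ in general, so $v - Pv \notin W$) — rather $v - Pv \in X$ and $P(v-Pv) = 0$ so $v - Pv \in \overline V^{\perp} = W$ only if $v - Pv \in X$, which it is, so $v - Pv \in W$. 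Good. Then $K_n(v - Pv) \in V_n$ and $v - Pv \in W$, and ANCO applied to... we need $\|K_n u\|$ small for $u \in W$, $|u|$ bounded. Use $a(K_n u, K_n u) = \langle u, K_n u\rangle$ and insert $Q_n$: $\langle u, K_n u\rangle = \langle u, Q_n K_n u\rangle + \langle u, (\id - Q_n)K_n u\rangle$; the first is $\langle u, K_n u\rangle$ again is not obviously helpful, but $(\id - Q_n) K_n u \in W_n$ and... I think the clean path is to pick $w \in W$ and note $\langle u, K_n u\rangle$ where one writes $K_n u = Q_n K_n u + (K_n u - Q_n K_n u)$ and uses that $K_n u - Q_n K_n u \in W_n$ is $O$-orthogonal to $V_n$, combined with the ANCO bound $|K_n u - Q_n K_n u| = |(\id - Q_n)K_n u| \le \epsilon_n \|K_n u\|$ — but ANCO only bounds $\id - Q_n$ on $V$, and $K_n u \in V_n \not\subseteq V$. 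So one must first pass $K_n u$ close to $V$ using $\delta(V_n, V)$, or use compactness. I anticipate the final proof handles this by the compactness of $K : O \to X$: $\|K_n\|_{W \to X} \le \|K_n - P_n K\|_{W \to X} + \|P_n K\|_{W \to X}$, the first term $0$ only on $\overline V$ not $W$, so instead bound $\|K_n\|_{W\to O}$ via $K_n u \perp_O$ (stuff) and ANCO, then bootstrap to $X$ via DF. I would present (b) by: (1) ANCO $\Rightarrow$ DF; (2) for $u \in W$, exploit $\langle u, K_n u \rangle = \langle u, (\id - Q_n)(K_n u)\rangle$ — valid because $Q_n K_n u - K_n u \in W_n$ and, crucially, $u \in W$ need \emph{not} be orthogonal to $W_n$, so this step actually needs $\langle u, Q_n K_n u\rangle$ to be handled differently — so (2') instead use a near-orthogonal-projection trick: let $R_n$ be the $O$-orthogonal projection onto $V_n$ and estimate via ANCO on $R_n$-images which do lie near $V$. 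This is the crux and I would allot most of the writing to getting this chain airtight; everything else is bookkeeping already done in Propositions \ref{prop:dfksp}, \ref{prop:equiv}, and \ref{prop:oconvvg}.
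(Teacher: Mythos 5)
The proposal has genuine gaps in both nontrivial directions, and in each case the missing piece is precisely the ingredient the paper's proof turns on.

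For the implication ``VG $\Rightarrow$ $P_n$ are ANCO,'' the ``slick argument'' you settle on --- apply Lemma~\ref{lem:pointnorm} to the compact inclusion $V \to O$ and the uniformly bounded (by DF) operators $P_n|_V$ to upgrade pointwise convergence $P_n v \to v$ in $O$ to norm convergence $\|\id - P_n\|_{V\to O}\to 0$ --- does not go through. Lemma~\ref{lem:pointnorm} yields $\|AK - A_nK\|\to 0$ for a \emph{compact} $K$ composed with uniformly bounded $A_n$; it is not a blanket pointwise-to-uniform upgrade for $A_n$ itself. There is no way to slot the inclusion $V\to O$ into the role of $K$ so that the conclusion reads $\|\id - P_n\|_{V\to O}\to 0$: you would need a compact operator \emph{onto} $V$ in the $K$-position, and none is available. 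More decisively, your argument invokes only DF, never VG. Were it correct, it would prove DF $\Rightarrow$ ($P_n$ ANCO) $\Rightarrow$ VG, contradicting Proposition~\ref{prop:dcnotvg} together with Proposition~\ref{prop:dcvg} (spaces with DF but not DC~$=$~VG exist). The paper genuinely needs VG here: it splits $|v - P_nv| \leq |P_nv - PP_nv| + |v - PP_nv|$, bounds the first term by $\delta(V_n,V)\,\|v\|$ (this is where VG enters, and where DF alone is not enough), and bounds the second by an Aubin--Nitsche duality computation controlled by $\|K - P_nK\|_{O\to X}$, which is where Lemma~\ref{lem:pointnorm} is legitimately applied (to the compact $K:O\to X$).

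For the implication ``ANCO $\Rightarrow$ VG,'' you touch the right identity ($v - Q_nPv \in W_n$, with $v$ and $Pv$ both $O$-orthogonal to $W_n$) but do not close it, and then wander into an uncompleted route through $K_n$ and Lemma~\ref{lem:gapb}. Your stated worry --- ``ANCO gives an $O$-estimate on $V$, not an $X$-estimate everywhere'' --- dissolves once you notice that the defect $v - Pv$ lies in $W$, and on $W$ the $X$-norm and $O$-norm coincide, since $a$ vanishes there. Concretely: for $v\in V_n$, since $v - Q_nPv\in W_n$ and $v - Pv\perp_O W_n$, one gets $|v - Pv| \leq |Pv - Q_nPv| \leq \epsilon_n\|Pv\| \leq \epsilon_n\|v\|$; then $\|v - Pv\| = |v - Pv|$ because $v-Pv\in W$, so $\delta(V_n,V)\leq\epsilon_n\to 0$. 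The $O$-estimate from ANCO is exactly the $X$-estimate you need. Everything else in your write-up is circling; these two observations are the proof.
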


\begin{proof}
(i) Suppose VG holds. We shall show that $P_n$ are ANCO. For $v \in V$ we write:
\begin{equation}\label{eq:twoterms}
|v - P_nv| \leq |P_nv - P P_nv| + |v - P P_nv|.
\end{equation}
For the first term on the right hand side we remark that:
\begin{equation}
|P_nv - P P_nv| \leq \delta (V_n,V) \|P_n v\| \cleq \delta(V_n,V) \|v\|.
\end{equation}
For the second term we use that for all $v' \in \overline V$:
\begin{align}
\langle v - P P_nv, v' \rangle & = a(v-P P_n v, Kv'),\\
&= a(v - P_n v, K v' - P_n Kv'),\\
&\leq \| v - P_n v\| \, \| Kv' - P_n Kv'\|,\\
& \cleq \|v\| \, \|K-P_n K\|_{O \to X} |v'|.
\end{align} 
This gives:
\begin{equation}
| v - P P_nv| \cleq \|K-P_n K\|_{O \to X} \|v\|.
\end{equation}
From the compactness of $K: O \to X$ and Lemma \ref{lem:pointnorm} in the appendix, one concludes that  $\|K-P_n K\|_{O \to X}$ converges to $0$.

Combining the bounds for the two terms in the right hand side of (\ref{eq:twoterms}), we obtain that $P_n$ are ANCO.

(ii) Suppose $Q_n$ are ANCO, so that we have a sequence $\epsilon_n \to 0$ verifying:
\begin{equation}
\forall v \in V \ \forall n \quad |v - Q_n v| \leq \epsilon_n \| v \|.
\end{equation}
Now for $v \in V_n$ we have:
\begin{equation}
v - Q_nPv = (v - Q_n v) +  Q_n(v -Pv) \in W_n,
\end{equation}
and, since both $v$ and $Pv$ are orthogonal to $W_n$:
\begin{equation}\label{eq:dischodgest}
|v - Pv | \leq |P v - Q_n Pv| \leq \epsilon_n \|Pv\|.
\end{equation}
This gives VG.
\end{proof}

\begin{remark}
Proposition \ref{prop:equiv} shows that when DF holds we have:
\begin{equation}
\|K - K_n\|_{O \to X} \cleq \max \{ \delta (V_n,V), \|K - P_nK\|_{O \to X} \}.
\end{equation}
Part (ii) of the proof of Proposition \ref{prop:vgancp} gives an estimate for $\delta(V_n,V)$ from any ANCO. On the other hand $P_n$ converges pointwise quasi-optimally to $P$ in $X$ (see Proposition \ref{prop:dfunifb} and equation (\ref{eq:quasioptxv})), so that the second term is bounded by the approximation rate:
\begin{equation}
\sup_{u \in O} \inf_{u_n \in X_n} \|K u - u_n\|/ |u|.
\end{equation}
Combining these remarks gives convergence \emph{rates} for $\| K - K_n\|_{O \to X}$.
\end{remark}

\begin{remark} To establish VG or some related results one can get away with even less. For instance in the finite element literature one has used operators $Q_n$ that are projections onto $X_n$ mapping (a dense subset of) $W$ to $W_n$, but that are not defined on all of $X$. The crucial point is that $Q_n$ should be defined on  $P X_n$ and that an estimate of the type (\ref{eq:dischodgest}) still holds. This is done in particular in Lemma 4.1 of \cite{CiaZou99}, which involves a local finite dimensionality argument.
\end{remark}

\begin{proposition}\label{prop:odftcp} The following are equivalent:
\begin{itemize}
\item ODF.
\item There exist TCO.
\end{itemize}
\end{proposition}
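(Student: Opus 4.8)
The plan is to prove the two implications separately. For the direction ODF $\Rightarrow$ TCO, the natural candidate for a tame compatible operator is $P_n$ itself. Indeed $P_n$ is compatible (it maps $W$ to $0 \subseteq W_n$, and for $u \in X_n$ we have $u - P_nu \in W_n$ since $P_n$ fixes $V_n$ and the decomposition \eqref{eq:split} gives $u - P_nu \in W_n$). So it suffices to show $P_n$ is uniformly bounded $O \to O$ under ODF. For this I would split an arbitrary $u \in X$ as $u = Pu + (I-P)u$ along the orthogonal decomposition of $O$, note $P_n(I-P)u$ need not vanish in general, so instead I would argue more carefully: for $v \in V_n$, $a(P_nu, v) = a(u,v) = a(Pu,v)$, so $P_nu = P_n(Pu)$, i.e. $P_n$ factors through $P$. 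Then for $w := Pu \in V$ we estimate $|P_n w|$. Using that $P_nw \in V_n$, ODF gives $|P_nw| \leq C|P(P_nw)|$; but $P(P_nw)$ need not be simply related to $w$. A cleaner route: $|P_nw|^2 = \langle P_nw, P_nw\rangle$ and one would like $a(P_nw, P_nw) = a(w, P_nw) \leq \|w\| \|P_nw\|$ combined with the Friedrichs-type control; but that only gives $X \to X$ boundedness, i.e. DF, not the $O \to O$ bound. The key extra input is precisely ODF in the form $\delta_O(V_n, \overline V) \leq \delta < 1$ from Proposition \ref{prop:odfgap}: since $P$ restricted to $V_n$ is (up to the factor $(1-\delta^2)^{1/2}$) bounded below, and $P P_n$ equals $P$ composed with... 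Actually the clean statement is $P P_n u = P u$ when we also know $P_n u - u \in W_n$ for $u\in X$? That fails since $P_n$ is only defined with range $V_n$. So the real argument: for $u \in X$, write $u = v + w$ with $v \in V$, $w \in W$ (the $X$-decomposition), then $P_nu = P_n v$ (since $P_n w = 0$ as $w$ is $a$-orthogonal to $V_n$... no, that is false in general because $a(w,\cdot)=0$ on all of $X$, so actually $a(P_nw,\cdot) = a(w,\cdot) = 0$ on $V_n$, hence $P_nw = 0$). Good — so $P_n u = P_n(Pu)$ genuinely, and $Pu \in V$. Now for $v \in V$: I claim $|P_nv| \leq C|v|$. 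Apply ODF to $P_nv \in V_n$: $|P_nv| \leq C_0|P(P_nv)|$. Now $P(P_nv) \in \overline V$, and I want to compare it to $v$. Since for $v' \in V_n$, $a(P_nv, v') = a(v, v') $, and $P$ is the $a$-orthogonal... hmm. Let me instead estimate $|P(P_nv)|^2 = \langle P_n v, P(P_nv)\rangle$. I would need $P(P_nv)$ approximated in $V_n$. This is where Proposition \ref{prop:asv} enters: choose $z_n \in V_n$ with $z_n \to P(P_nv)$... but $P(P_nv)$ moves with $n$. I expect the honest proof uses $\delta_O$ more directly: $|P_nv|^2 = |P(P_nv)|^2 + |P_nv - P(P_nv)|^2 \le |P(P_nv)|^2 + \delta^2 |P_nv|^2$, giving $|P_nv|^2 \le (1-\delta^2)^{-1}|P(P_nv)|^2$, and then $|P(P_nv)|^2 = \langle P(P_nv), P_n v\rangle = \langle P(P_nv), v\rangle$ because $v - P_nv$ is $O$-orthogonal to $V_n \ni P(P_n v)$? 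No: $P(P_nv) \in \overline V$, not $V_n$. Correct identity: $a(P_nv, P(P_nv)) = a(v, P(P_nv))$ only if $P(P_nv) \in V_n$. So I would instead use $\langle v - P_n v, \cdot\rangle$: for $v' \in V_n$, $\langle P_n v, v'\rangle$ is not controlled. I think the genuine argument writes $|P(P_nv)|^2 = a(P_nv, K(P(P_nv)))$ via \eqref{eq:kudef} and uses that $K$ maps into $V$, pulls out $\|K\|$, getting $|P(P_nv)| \lesssim a(P_nv,P_nv)^{1/2}\cdot(\dots)$ — but that reintroduces the energy norm. Given the difficulty, the \textbf{main obstacle} is establishing the $O\to O$ bound for $P_n$; I suspect the intended proof instead invokes Proposition \ref{prop:odfdc} (ODF $\Rightarrow$ DC $\Rightarrow$ DF) so that $P_n \to P$ pointwise in $X$, hence pointwise in $O$ on the dense set $X$, and then combines pointwise $O$-convergence with ODF-derived uniform bounds via a contradiction/compactness argument analogous to Proposition \ref{prop:dfunifb}(iii): if $|P_nu_n| \to \infty$ with $|u_n|=1$, pass to a subsequence, use DC to get strong $O$-convergence of a normalized version of $P_nu_n/|P_nu_n|$ and derive a contradiction with ODF. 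I would write up the ODF $\Rightarrow$ TCO direction along those lines, taking $Q_n = P_n$.

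For the converse TCO $\Rightarrow$ ODF, suppose $Q_n: X \to X_n$ are compatible with $\|Q_n\|_{O\to O} \leq C$ uniformly. Fix $v \in V_n$. Using compatibility exactly as in the proof of Proposition \ref{prop:vgancp}(ii): $v - Q_n(Pv) = (v - Q_nv) + Q_n(v - Pv)$; the first term is in $W_n$ by the second defining property of compatibility (since $v \in X_n$), and the second term $Q_n(v-Pv)$ lies in $Q_n(W) \subseteq W_n$ by the first defining property, because $v - Pv \in W$. Hence $v - Q_n(Pv) \in W_n$. Since $v \in V_n$ and $Pv \in V \subseteq \overline V$ are both $O$-orthogonal to $W_n \supseteq W_n$, we get $|v|^2 = \langle v, v\rangle = \langle v, Q_n(Pv)\rangle \leq |v|\,|Q_n(Pv)| \leq |v|\cdot C|Pv|$, wait — more precisely $\langle v, v - Q_n(Pv)\rangle = 0$ since $v \perp W_n$ in $O$, so $|v|^2 = \langle v, Q_n(Pv)\rangle \le |v|\cdot\|Q_n\|_{O\to O}\,|Pv| \le C|v|\,|Pv|$, i.e. $|v| \le C|Pv|$, which is exactly ODF. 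This direction is short and clean; I would present it essentially as above.

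Putting it together, I would structure the write-up as: first dispatch TCO $\Rightarrow$ ODF in a few lines using the compatibility algebra and $O$-orthogonality to $W_n$; then prove ODF $\Rightarrow$ TCO by showing the canonical $P_n$ are tame, using that ODF implies DC (Proposition \ref{prop:odfdc}) hence DF (so $P_n \to P$ pointwise in $X$ and the $P_n$ are at least uniformly $X\to X$ bounded), together with Proposition \ref{prop:odfgap}'s uniform bound $\delta_O(V_n,\overline V)\le\delta<1$. The cleanest finish: for any $u\in X$, $P_nu = P_n(Pu)$ with $Pu \in V$, so it suffices to bound $P_n$ on $V$ in $O$-norm; for $w\in V$, write $|P_nw|^2 = |P(P_nw)|^2 + |(I-P)P_nw|^2 \le |P(P_nw)|^2 + \delta^2|P_nw|^2$, so $|P_nw| \le (1-\delta^2)^{-1/2}|P(P_nw)|$; then bound $|P(P_nw)|$ by testing against $K$ of elements of $\overline V$ and exploiting $a(P_nw, \cdot) = a(w,\cdot)$ on $V_n$ together with pointwise $X$-convergence $P_nK z \to Kz$, concluding $|P(P_nw)| \le (1+o(1))\,|w| \cdot \text{const}$ uniformly; this is the step I flagged as the main obstacle and where I would spend the most care, possibly replacing it by the contradiction-via-DC argument if the direct estimate proves unwieldy. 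In either case the final conclusion is that $P_n$ are TCO, completing the equivalence.
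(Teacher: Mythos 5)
Your TCO $\Rightarrow$ ODF direction is correct and follows the paper's argument essentially verbatim: the compatibility algebra gives $v - Q_n(Pv) = (v-Q_nv)+Q_n(v-Pv)\in W_n$, $O$-orthogonality of $v\in V_n$ to $W_n$ gives $|v|^2=\langle v, Q_n Pv\rangle$, and tameness of $Q_n$ finishes.

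The ODF $\Rightarrow$ TCO direction contains a genuine gap, and it is not one that careful estimation can close: you propose to show that the $P_n$ themselves are tame, but this is \emph{false} in general even under ODF. The remark that immediately follows this proposition in the paper states exactly this, with the example of variational $\rmH^1$-projections onto continuous piecewise-affine finite elements -- a setting where $W=\{0\}$ and ODF holds trivially, yet $P_n$ is not $\rmL^2$-stable. This is why all of your attempted estimates kept falling back to the energy norm, and why the contradiction-via-DC fallback cannot rescue the argument either; the claim about $P_n$ that you are trying to establish simply does not hold. The proposition only asserts that \emph{some} TCO exists, and the paper builds one directly rather than trying to use $P_n$. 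On the (direct, closed) subspace $V_n\oplus W$ of $O$, let $Q_n$ be the projection with range $V_n$ and kernel $W$. For $u\in V_n\oplus W$ one has $\myP(u-Q_nu)=0$, hence $\myP Q_n u = \myP u$, and ODF applied to $Q_nu\in V_n$ gives $|Q_nu|\cleq |\myP Q_nu|=|\myP u|\le|u|$; thus $Q_n$ is uniformly bounded in the $O$-norm on $V_n\oplus W$. Setting $R_n = Q_n\,P[V_n\oplus W]$, where $P[V_n\oplus W]$ denotes the $O$-orthogonal projection onto $V_n\oplus W$, yields operators with range in $V_n$ that annihilate $W$ and are uniformly bounded $O\to O$; one checks directly that they satisfy both compatibility conditions (they send $W$ to $0\subseteq W_n$, and for $u\in X_n=V_n\oplus W_n$ one gets $u-R_nu\in W_n$). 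These $R_n$ are the required TCOs. The moral is that when you only need \emph{existence} of a compatible operator with a prescribed boundedness property, it can be much cleaner to construct one tailored to that property than to try to verify the property for the canonical projector $P_n$.
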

\begin{proof}
(i) Suppose ODF holds. In $V_n \oplus W$ let $Q_n$ be the projection with range $V_n$ and kernel $W$. For $u \in V_n \oplus W$ we have:
\begin{equation}
\myP (u-Q_n u)= 0.
\end{equation}
Using ODF we deduce:
\begin{equation}
|Q_n u| \cleq |\myP Q_n u| = |\myP u| \leq |u|.
\end{equation}
For any closed subspace $Z$ of $O$, let $P[Z]$ denote the $O$-orthogonal projection onto it. Define:
\begin{equation}\label{eq:rdef}
R_n = Q_n P[V_n \oplus W].
\end{equation}
They have range in $V_n$, map $W$ to $0$ and are uniformly bounded in $O$, so constitute TCOs.

(ii) We reason as in the proof of Proposition \ref{prop:dfksp}.  Suppose that $Q_n$ are TCOs. Then for any $u \in V_n$ we have:
\begin{equation}
u - Q_n \myP u = (u - Q_n u) + Q_n(u-\myP u)\in W_n.
\end{equation}
Hence $u - Q_n \myP u$ is orthogonal to $u$ in $O$ and we have:
\begin{equation}
| u | \leq | Q_n \myP u | \cleq | \myP u |.
\end{equation}
This gives ODF.
\end{proof}

\begin{remark}
Notice that we have proved that if TCO holds, then ODF holds by Proposition \ref{prop:odftcp}, so DC holds by Proposition \ref{prop:odfdc}, so VG holds by Proposition \ref{prop:dcvg}, so the $P_n$ are ANCO by Proposition \ref{prop:vgancp} -- however they will in general not be tame.  For instance, the variational $\rmH^1$ projections onto continuous piecewise affine finite element functions are not continuously extendable to $\rmL^2$ and a fortiori not $\rmL^2$ stable. 
\end{remark}

\begin{remark}
From Lemma \ref{lem:pointnorm} it follows that projections that are tame also satisfy the Aubin-Nitsche property, but in general TCOs do not need to be FCOs. On the other hand, tame commuting projections, discussed below, are uniformly bounded in both $X$ and $O$. Thus the notion of TCO is weaker than that of tame commuting projections.
\end{remark}

\begin{remark} If ODF holds, one can modify the definition (\ref{eq:rdef}) to:
\begin{equation}
R_n = Q_n P[V_n \oplus W] + P[W_n].
\end{equation} 
Restricted to $V_n$ and $W_n$,  $R_n$ is the identity, so that now $R_n$ is a projection with range $X_n$ instead of $V_n$. It is also uniformly bounded $O \to O$, and maps $W \to W_n$. Still, boundedness $X \to X$ does not seem guaranteed and this construction does not appear to provide tame commuting projections in general.
\end{remark}

\section{Additional remarks\label{sec:add}}

\subsection{Hodge Laplacian and commuting diagrams\label{sec:hodge}}
To study the de Rham complex on a manifold $S$ of dimension $n$, let the space $O^k$ consist of differential $k$-forms in $\rmL^2(S)$, and $X^k$ consist of the elements of $O^k$ with exterior derivative in $O^{k+1}$. We have a complex of Hilbert spaces, linked by the exterior derivative:
\begin{equation}
\xymatrix{
X^0 \ar[r]^\rmd & X^1 \ar[r]^\rmd & \ldots \ar[r]^\rmd & X^n.
}
\end{equation}

More generally consider a  Hilbert complex of spaces $X^k$ ($k \in \bbZ$). There is a generic notion of Hodge Laplacian at each index $k$, which can be given a weak formulation involving $X^k$ and $X^{k-1}$, see \cite{ArnFalWin10}. The eigenvalue problem can be decoupled into three problems: one semidefinite eigenvalue problem on $X^k$, a similar one for $X^{k-1}$ and one concerning harmonic forms. The discretization of these are all analyzed in terms of commuting projections that are tame (uniformly bounded in $O^k$) in \S 3.6 of \cite{ArnFalWin10}. 

Tame commuting projections have been constructed in concrete examples \cite{Sch08,Chr07NM,ArnFalWin06,ChrWin08}, corresponding to mixed finite elements for the de Rham complex, in the $h$-version. It is still an open problem if they exist in the $p$- and $hp$-versions of the finite element method. However DC has been verified for important problems in that context \cite{BofCosDauDem06,BofEtAl11}. In \S \ref{sec:strict} we will check, at an abstract level, that VG does not imply ODF. Therefore, as a tool to study eigenvalue convergence, tame commuting projections are more powerful than what is strictly necessary, to the extent that they might not exist. But recall from Remark \ref{rem:divcurl} that ODF is necessary for some non-linear estimates. Remark \ref{rem:lqco} gives additional motivation for constructing commuting projections that are stable in weak norms.

We show here how each of the above mentioned semi-definite eigenvalue problems deduced from Hodge Laplacians fits into the framework adopted in this paper, indicating in particular how commuting projections lead to compatible operators.

Suppose that for $k= 0,1$ we have Hilbert spaces $(O^k)$ and $(X^k)$ which can be arranged as follows:
\begin{equation}
\xymatrix{
O^0   & O^1  \\
X^0 \ar[r]^\rmd \ar[u]& X^1\ar[u]}
\end{equation}
The vertical maps are inclusions and the horizontal map $\rmd: X^0 \to X^{1}$ is bounded.

The scalar product on $O^k$ is denoted $\langle \cdot , \cdot \rangle_k$ and we put, for $u,u' \in X^0$:
\begin{equation}
a(u,u') = \langle \rmd u , \rmd u' \rangle_{1}.
\end{equation}
We suppose that the triplet $(O^0, X^0, a)$ complies with the setting of \S \ref{sec:setting}. In particular $X^0$ is dense in $O^0$ and the norm on $X^0$ can be taken to be defined by:
\begin{equation}
\|u\|^2 = \langle u, u \rangle_0 + \langle \rmd u, \rmd u \rangle_1.
\end{equation}
We split as before:
\begin{equation}
X^0 = V^0 \oplus W^0.
\end{equation}
We remark that $W^0$, the kernel of $a$, is also the kernel of $\rmd : X^0 \to X^1$. It is part of the hypotheses that the injection $V^0 \to O$ must be compact.

We suppose that for $k= 0, 1 $ we have a sequence of subspaces $(X^k_n)$ of $X^k$, such that $\rmd: X^0_n \to X^{1}_n$. We split them as before $X^0_n = V^0_n \oplus W^0_n$. One sees that $W^0_n$ is the kernel of $\rmd$ on $X^0_n$. 

To check ADK we can do the following. Suppose we have dense subspaces $Y^k$ of $X^k$, which are also Banach spaces with continuous inclusions, and that we have projections $Q_n^k : Y^k \to X^k_n$ such that the following diagrams commute:
\begin{equation}\label{eq:comdiag}
\xymatrix{
Y^0 \ar[r]^\rmd \ar[d]^{Q^0_n}& Y^1  \ar[d]^{Q^1_n}\\
X^0_n \ar[r]^\rmd & X^1_n 
}
\end{equation}
Then $Q^0_n$ maps $W^0\cap Y^0$ to $W^0_n$. This guarantees ADK to hold, under the mild assumption that $W^0 \cap Y^0$ is dense in $W^0$ and $Q^0_n$ uniformly bounded $Y^0 \to O^0$. 

If, in diagram (\ref{eq:comdiag}), $Y^k = X^k$, then $Q^0_n$ is a compatible operator according to our definition. In particular if it is FCO, ANCO or TCO, corresponding estimates hold, guaranteeing convergence of various variational problems, as detailed in the preceding sections.

In the mixed finite element setting, this reasoning can be applied to the canonical interpolators. The Sobolev injection theorems provide dense Banach spaces $Y^k$ on which the degrees of freedom are well defined, proving ADK. Unfortunately canonical interpolators are not in general well defined on $X^k$, even less so on $O^k$. In the above cited references, concerning finite element exterior calculus, tame commuting projections have been constructed by composing canonical interpolators, with smoothing operators that commute with the exterior derivative.

\begin{remark}\label{rem:lqco} For the de Rham complex, tameness of commuting projections is boundedness with respect to the $\rmL^2$ norm. Commuting projections that are $\rmL^q$ bounded for other real $q$, are of interest for non-linear problems. For instance they were constructed and used in \cite{ChrMunOwr11} (\S 5.4) to prove discrete translation estimates and Sobolev injections, extending \cite{KarKar11} and \cite{BufOrt09}. 
\end{remark}

\subsection{Equivalent forms\label{sec:eqfor}}
For the case of electromagnetics, as in equation (\ref{eq:maxwell}) and Remark \ref{rem:exoneandtwo},  it is proved in \cite{CaoFerRaf00} (Proposition 2.27) that DC holds for electromagnetic coefficients $\epsilon$ and $\mu$ that are both scalar and equal to $1$,  if and only if DC holds for coefficients which are $3\times 3$ symmetric matrix fields which are bounded and uniformly positive definite.
In this section we give an abstract variant of this result.

Suppose that, in addition to the previous spaces and forms, we have a symmetric bilinear form $\langle \cdot, \cdot \rangle^\mys$ on $O$ and a symmetric bilinear form $a^\mys$ on $X$, which are equivalent to $\langle \cdot , \cdot \rangle$ and $a$:
\begin{align}
\forall u \in O& \quad  \langle u, u \rangle \cleq  \langle u  , u \rangle^\mys \cleq \langle u, u \rangle,\\ 
\forall u \in X& \quad a(u,u) \cleq a^\mys(u ,u) \cleq a(u,u).
\end{align}
These bilinear forms can be used to redefine the norm of $O$ as well as that of  $X$. Notice that $a$ and $a^\mys$ have the same kernel $W$. Applying the previous constructions we get new splittings, with obvious definitions:
\begin{align}
X  & = V^\mys \oplus W,\\
O  & = \overline V^\mys \oplus W.\label{eq:onewsplit}
\end{align}
We are interested in the eigenvalue problem these forms define, and its discretization on the same sequence of spaces $X_n$ as before. They get a new splitting:
\begin{equation}
X_n  = V_n^\mys \oplus W_n.
\end{equation}

First we check:
\begin{proposition}\label{prop:vsac}
The injection $V^\mys \to O$ is compact.
\end{proposition}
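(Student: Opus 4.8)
The plan is to reduce the compactness of $V^\mys \to O$ to that of the original injection $V \to O$, which we have assumed. The key geometric fact to exploit is that the two decompositions $X = V \oplus W$ and $X = V^\mys \oplus W$ share the same kernel $W$; they differ only in the choice of complement. So any $v \in V^\mys$ can be written uniquely as $v = Pv + w$ with $Pv \in V$ and $w \in W$, where $P$ is the original orthogonal projector onto $\overline V$ with kernel $W$.

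First I would take a bounded sequence $(v_j)$ in $V^\mys$, bounded with respect to the norm of $X$ (equivalently, the $\mys$-modified norm of $X$, since the two are equivalent). I want to extract a subsequence converging in $O$. Split $v_j = Pv_j + w_j$. The term $Pv_j$ lies in $V$ and is bounded in $X$: indeed $a(Pv_j, Pv_j) = a(v_j, v_j) \cleq a^\mys(v_j,v_j) \leq \|v_j\|_\mys^2$, and by the Friedrichs inequality (\ref{eq:frie}) on $V$, together with (\ref{eq:scalpx}), this bounds $\|Pv_j\|$. By the assumed compactness of $V \to O$, a subsequence of $(Pv_j)$ converges strongly in $O$. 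The remaining work is to show the $w_j$ part also converges in $O$ along a further subsequence.

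For the $w_j = v_j - Pv_j$ part, I would argue that $(w_j)$ is bounded in $W$ and that $W$, with the norm of $X$, injects compactly into $O$. The boundedness of $w_j$ in $X$ follows from boundedness of $v_j$ and of $Pv_j$. For the compactness of $W \to O$: note $W = \ker a$, so on $W$ the norm of $X$ is just the $O$-norm, hence $W$ is closed in $O$ and the identity $W \to O$ is an isometry — this is emphatically \emph{not} compact in general (the excerpt stresses $W$ need not be finite-dimensional). So this naive route fails, and I must use the structure of $V^\mys$ rather than treating $w_j$ in isolation.

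The correct approach, and the main obstacle, is to control $w_j$ using that $v_j \in V^\mys$, i.e. $v_j \perp_\mys W$, which links $w_j$ to $Pv_j$. Since $v_j = Pv_j + w_j$ and $\langle v_j, w\rangle^\mys = 0$ for all $w \in W$, we get $\langle w_j, w \rangle^\mys = -\langle Pv_j, w\rangle^\mys$ for all $w \in W$; taking $w = w_j$ gives $\langle w_j, w_j\rangle^\mys = -\langle Pv_j, w_j\rangle^\mys$, hence by equivalence of the forms on $O$ and Cauchy–Schwarz, $|w_j|^2 \cleq |w_j|_\mys^2 = |\langle Pv_j, w_j\rangle^\mys| \cleq |Pv_j|\,|w_j|$, so $|w_j| \cleq |Pv_j|$. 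Therefore, once a subsequence of $(Pv_j)$ converges in $O$, say $Pv_j \to \bar v$, applying the same estimate to differences $v_j - v_k$ (whose $P$-parts are $Pv_j - Pv_k$ and whose $W$-parts are $w_j - w_k$, still linked by the $\mys$-orthogonality of $V^\mys$ to $W$) yields $|w_j - w_k| \cleq |Pv_j - Pv_k| \to 0$, so $(w_j)$ is Cauchy in $O$. Hence $v_j = Pv_j + w_j$ converges in $O$ along this subsequence, proving $V^\mys \to O$ compact. I would present the estimate $|w_j - w_k| \cleq |Pv_j - Pv_k|$ carefully, as it is the crux; everything else is routine extraction of subsequences and use of the assumed compactness of $V \to O$.
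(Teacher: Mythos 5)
Your proof is correct, and it takes a genuinely different route from the paper's. The paper observes that $P$ restricted to $\overline V^\mys$ is a continuous bijection onto $\overline V$ (because $O = \overline V^\mys \oplus W$ and $P$ has kernel exactly $W$), invokes the open mapping theorem to get a continuous inverse $J:\overline V\to \overline V^\mys$, and then transports a relatively compact set through $J$: a bounded $E\subset V^\mys$ has $PE$ bounded in $V$, hence relatively compact in $O$, hence $E = JPE$ is relatively compact. You instead prove the continuity of that inverse by hand: the $\mys$-orthogonality of $V^\mys$ to $W$ gives, for $v = Pv + w$ with $w\in W$,
\begin{equation}
|w|^2_\mys = \langle w, w\rangle^\mys = -\langle Pv, w\rangle^\mys \leq |Pv|_\mys\,|w|_\mys,
\end{equation}
so $|w|\cleq |Pv|$ and therefore $|v|\cleq |Pv|$ for all $v\in V^\mys$; applied to differences this makes your Cauchy-sequence argument work. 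This buys you an explicit, quantitative bound and avoids the Baire-category machinery behind the open mapping theorem, at the modest cost of a slightly longer sequential argument. Both are complete; yours is arguably more self-contained, while the paper's packages the estimate into a one-line appeal to a standard theorem. One small presentational remark: the inequality $|v|\cleq |Pv|$ on $V^\mys$ is exactly the statement that $J=(P|_{\overline V^\mys})^{-1}$ is bounded, so it would be worth saying so explicitly — it makes the comparison with the open-mapping route transparent and lets the rest of the compactness argument be stated in the same set-theoretic form as the paper's.
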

\begin{proof}
From (\ref{eq:onewsplit}) it follows that the projector $P$ (defined by $\langle \cdot, \cdot \rangle$ and $a$) induces a bijection $\overline V^\mys \to \overline V$ which is continuous in $O$. By the open mapping theorem its inverse, denoted $J$, is also continuous. 


Suppose $E$ is a bounded subset of $V^\mys$. Then $PE$ is bounded in $V$, hence relatively compact in $\overline V$. Therefore $JPE$ is relatively compact in $\overline V^\mys$. But $E = JPE$ so that $E$ is relatively compact in $O$. 
\end{proof}

Now one can compare the meaning of DF, VG and ODF with respect to $\langle \cdot , \cdot \rangle$ and $a$, with their corresponding statements for $\langle \cdot , \cdot \rangle^\mys$ and $a^\mys$, which we abbreviate as DF\myss, VG\myss, ODF\myss.

 \begin{proposition} We have:
\begin{itemize}
\item DF and DF\myss are equivalent.
\item VG and VG\myss are equivalent.
\item ODF and ODF\myss are equivalent.
\end{itemize}
\end{proposition}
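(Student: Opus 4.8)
The plan is to prove each of the three equivalences by exploiting the previously established machinery that characterizes DF, VG, and ODF in terms of the solution operators $K_n$ together with Proposition \ref{prop:vsac}, which guarantees that the starred setting is itself a legitimate instance of the abstract setting of \S\ref{sec:setting}. Throughout, I would write $K^\mys$, $K_n^\mys$, $P^\mys$, $P_n^\mys$ for the operators attached to the forms $\langle\cdot,\cdot\rangle^\mys$ and $a^\mys$, noting that $K^\mys$ is compact $O\to X$ for the same reasons as $K$ (this uses Proposition \ref{prop:vsac}). By symmetry of the hypotheses — each pair of forms is equivalent to the other with constants swapped — it suffices to prove each implication in one direction.

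For \textbf{DF} I would use Proposition \ref{prop:dfunifbk}: DF is equivalent to uniform boundedness of $K_n:O\to O$. But $K_n$ and $K_n^\mys$ have the same kernel structure and, more to the point, the spectral characterization of DF as ``the smallest nonzero discrete eigenvalue of $a$ is bounded below, away from zero'' transfers immediately, since $a^\mys(v,v)\cleq a(v,v)\cleq a^\mys(v,v)$ and $\langle v,v\rangle^\mys\simeq\langle v,v\rangle$ on all of $X_n$ (hence on $V_n$ and on $V_n^\mys$, which are the respective Rayleigh-quotient domains). Concretely: DF says $|v|^2\leq C a(v,v)$ for all $v\in V_n$; to get DF\myss I must bound $\langle v,v\rangle^\mys$ by $a^\mys(v,v)$ for $v\in V_n^\mys$, and here the subtlety is that $V_n^\mys\neq V_n$. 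The clean way is via FCO (Proposition \ref{prop:dfksp}): DF holds iff there exist compatible operators uniformly bounded $X\to X$; but compatibility (mapping $W$ into $W_n$, and $u-Q_nu\in W_n$ for $u\in X_n$) depends only on $W$ and $W_n$, which are the \emph{same} for the starred and unstarred data, and $X\to X$ boundedness is insensitive to passing to an equivalent norm on $X$. Hence FCO and FCO\myss are literally the same condition, giving DF $\Leftrightarrow$ DF\myss.

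The same observation handles \textbf{ODF} and \textbf{VG} with almost no extra work. For ODF, Proposition \ref{prop:odftcp} says ODF holds iff there exist TCO, i.e. compatible operators uniformly bounded $O\to O$; again compatibility is defined purely through $W$ and $W_n$, and $O\to O$ boundedness is preserved under replacing $|\cdot|$ by the equivalent norm from $\langle\cdot,\cdot\rangle^\mys$. So TCO $=$ TCO\myss and ODF $\Leftrightarrow$ ODF\myss. For VG, Proposition \ref{prop:vgancp} says VG holds iff there exist ANCO, i.e. compatible $Q_n$ with $\|\id-Q_n\|_{V\to O}\to0$. Here one must be slightly careful because the domain $V$ and the target norm $|\cdot|$ both change: ANCO\myss asks for $|v-Q_nv|^\mys\leq\epsilon_n\|v\|^\mys$ for $v\in V^\mys$. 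But given an ANCO $Q_n$ for the unstarred data, I would compose with the projector $P^\mys$ onto $\overline V^\mys$ along $W$ and estimate: for $v\in V^\mys$, $v-Q_nP v$ lies in $W_n$ (by compatibility, since $P v\in V\subseteq X$ forces $Q_nPv-Pv\in W_n$, wait — more carefully, $v-Q_nv\in$ ? here $v\notin X_n$ in general, so instead restrict attention to $v\in V_n^\mys$ as in the proof of Proposition \ref{prop:vgancp}). Following that proof line, for $v\in V_n^\mys$ one has $v-Q_nPv=(v-Q_nv)+Q_n(v-Pv)$, and both summands lie in $W_n$ using that $Pv\in V$ and compatibility; then $|v-P^\mys v|^\mys\cleq|v-Pv|\leq|Pv-Q_nPv|\leq\epsilon_n\|Pv\|\cleq\epsilon_n\|v\|^\mys$, using norm equivalence and that $v$ and $P^\mys v$ are $\langle\cdot,\cdot\rangle^\mys$-orthogonal to $W_n$ while $Pv$ has $\|Pv\|\leq\|v\|$. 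This yields $\delta^\mys(V_n^\mys,V^\mys)\to0$, i.e. VG\myss.

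The main obstacle is bookkeeping around the fact that $V_n$, $V_n^\mys$, $V$, $V^\mys$ are all genuinely different spaces, so one cannot simply say ``the inequality is the same up to constants'' at the level of the raw Friedrichs/gap inequalities; the resolution is to route every argument through the compatible-operator characterizations (Propositions \ref{prop:dfksp}, \ref{prop:vgancp}, \ref{prop:odftcp}), because compatibility is a condition on $W$ and $W_n$ alone — which are unchanged — and the three boundedness/convergence qualifiers (bounded $X\to X$, Aubin–Nitsche $V\to O$, bounded $O\to O$) are each stable under replacing the norms by equivalent ones. Once that is observed, FCO $=$ FCO\myss and TCO $=$ TCO\myss are immediate, and ANCO $\Leftrightarrow$ ANCO\myss requires only the short orthogonality computation above. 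I would present the proof in that order: first record that $(O,X,a^\mys)$ satisfies the standing assumptions via Proposition \ref{prop:vsac}, then dispatch DF and ODF in one line each, then spend a short paragraph on the ANCO transfer for VG.
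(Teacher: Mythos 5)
Your treatment of DF and ODF is correct and is essentially the paper's own argument: compatibility is a condition on $W$ and $W_n$ alone, which are unchanged, and uniform boundedness $X\to X$ (resp.\ $O\to O$) is insensitive to replacing the norm by an equivalent one, so FCO and TCO are the same for both data and Propositions \ref{prop:dfksp}, \ref{prop:odftcp} close the loop.

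The VG part has a genuine gap. You argue: take $v\in V_n^\mys$, write $v-Q_nPv=(v-Q_nv)+Q_n(v-Pv)\in W_n$, and then chain
\begin{equation}
|v-P^\mys v|^\mys\cleq |v-Pv|\leq |Pv-Q_nPv|\leq\epsilon_n\|Pv\|\cleq\epsilon_n\|v\|^\mys.
\end{equation}
The middle inequality $|v-Pv|\leq|Pv-Q_nPv|$ would follow by Pythagoras if $v-Pv$ were $O$-orthogonal to $W_n$; but $v\in V_n^\mys$ is only $\langle\cdot,\cdot\rangle^\mys$-orthogonal to $W_n$, while $Pv$ is $O$-orthogonal, so the two terms you need orthogonal are tested in different inner products and the Pythagoras step does not go through. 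The first inequality $|v-P^\mys v|^\mys\cleq|v-Pv|$ is likewise unjustified: since $\overline V^\mys\neq\overline V$, the point $Pv$ is not in the kernel of $I-P^\mys$, so one cannot invoke minimality of $P^\mys v$ over $\overline V^\mys$ to compare the two errors. Repairing this by replacing $Pv$ with $P^\mys v$ throughout restores the $\mys$-Pythagoras but then requires the ANCO estimate for $Q_n$ at the point $P^\mys v\in V^\mys$ -- and (\ref{eq:anest}) is only stipulated on $V$, not on $V^\mys$. That is exactly the obstruction the paper has to work around: it does so by building a new compatible operator $R_n=P_n+Q_n$ (with $Q_n$ now denoting the $O$-projection onto $W_n$), decomposing $v-R_nv=(I-P_n)Pv+(I-Q_n)(I-P)v$ for $v\in V^\mys$, controlling the first summand via Proposition \ref{prop:vgancp}, and controlling the second via the \emph{compactness} of $I-P:V^\mys\to O$ (from Proposition \ref{prop:vsac}), pointwise convergence of $Q_n$ to the identity on $W$ (from ADK), and Lemma \ref{lem:pointnorm}. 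This compactness/collective-convergence step is the idea missing from your proposal; without it there is no mechanism for transferring an Aubin--Nitsche estimate from $V$ to $V^\mys$.
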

\begin{proof}
We use the notion of compatible operator, which is the same for the two choices of bilinear forms.

(i) The first statement follows from Proposition \ref{prop:dfksp} and the third from Proposition \ref{prop:odftcp}.

(ii) We now prove that VG implies VG\myss, which yields the second statement, since the two choices of bilinear forms play symmetric roles. Define $P$ and $P_n$ by $\langle \cdot , \cdot \rangle$ and $a$, as before. We also let $Q_n$ denote the projection onto $W_n$, defined by the bilinear form $\langle \cdot, \cdot \rangle$ and put:
\begin{equation}
R_n = P_n + Q_n.
\end{equation}
Notice that $R_n$ is compatible operator, in fact a projection onto $X_n$ mapping $W$ onto $W_n$.

For $v \in V^\mys$ we write:
\begin{align}
v - R_n v & = (I-R_n)P v+ (I-R_n)(I - P) v,\\
& = (I-P_n)Pv + (I-Q_n) (I-P)v.
\end{align}
The first term on the right hand side is handled by Proposition \ref{prop:vgancp}, which yields an estimate (with $\epsilon_n \to 0$):
\begin{equation}
| (I-P_n)Pv | \cleq \epsilon_n \| v \|.
\end{equation}
For the second term, we notice that by Proposition \ref{prop:vsac},  $I-P$ defines a compact operator $V^\mys \to O$, with values in $W$. Recall that VG implies ADK, so that $Q_n$ converges pointwise to the identity on $W$. We can therefore apply Lemma \ref{lem:pointnorm}, to get an estimate (with $\epsilon_n' \to 0$):
\begin{equation}
| (I-Q_n)(I-P)v | \cleq \epsilon'_n \| v \|.
\end{equation}

Together these estimates prove that $R_n$ is an ANCO with respect to $\langle \cdot , \cdot \rangle^\mys$ and $a^\mys$. Therefore  VG\myss holds.
\end{proof} 

\subsection{Strictness of implications\label{sec:strict}}

\begin{remark} If $W$ is finite dimensional, ADK implies that for $n$ big enough $W_n = W$ so $V_n \subseteq V$, so that ODF holds.
\end{remark}

\emph{In the following propositions we suppose that $W$ is infinite-dimensional.} We then prove that among the above proved implications, those that were not proved to be equivalences indeed are not.

\begin{proposition} Spaces satisfying AS and ADK can be perturbed along one sequence of directions so as to still satisfy AS and ADK but not DF.
\end{proposition}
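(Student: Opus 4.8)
The plan is to leave every discrete kernel $W_n$ unchanged and to enlarge each $X_n$ by a single extra direction $\widetilde{v}_n$ which is an arbitrarily small $a$-perturbation of a unit vector of $W$ that the current $W_n$ happens to miss. Concretely, for each $n$ one proceeds as follows. Since $W$ is infinite-dimensional and $W_n$ finite-dimensional, choose a unit vector $w^{(n)} \in W$ that is $O$-orthogonal to $W_n$. Since the non-zero eigenvalues of $a$ diverge to infinity the space $V$ is infinite-dimensional, so $P(X_n)$ is a proper (finite-dimensional) subspace of $V$; pick $v^{(n)} \in V \setminus P(X_n)$ and rescale it so that $a(v^{(n)},v^{(n)}) = 1$, which is legitimate because $a$ is a scalar product on $V$ by (\ref{eq:frie}) (and then $|v^{(n)}|^2 \leq C$ by the same inequality). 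Finally fix any $\epsilon_n \to 0$ with $\epsilon_n > 0$, and set $\widetilde{v}_n = w^{(n)} + \epsilon_n v^{(n)}$ and $\widetilde{X}_n = X_n + \mathbb{R}\widetilde{v}_n$. Because $P\widetilde{v}_n = \epsilon_n v^{(n)} \notin P(X_n)$ we have $\widetilde{v}_n \notin X_n$, so this is a genuine one-dimensional enlargement, and the perturbing sequence $(\widetilde{v}_n)$ is bounded, indeed small, in $X$.

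Then AS for $(\widetilde{X}_n)$ is immediate from $X_n \subseteq \widetilde{X}_n$. For ADK the key claim is $\widetilde{X}_n \cap W = W_n$; granting it, ADK for $(\widetilde{X}_n)$ is exactly ADK for $(X_n)$. The inclusion $\supseteq$ is trivial. For $\subseteq$, take $u + c\widetilde{v}_n \in W$ with $u \in X_n$ and decompose $u = Pu + (I-P)u$ in $O = \overline V \oplus W$; the $\overline V$-component of $u + c\widetilde{v}_n$ equals $Pu + c\epsilon_n v^{(n)}$, which must vanish, forcing $c\epsilon_n v^{(n)} = -Pu \in P(X_n)$, hence $c = 0$ by the choice of $v^{(n)}$, and then $u + c\widetilde{v}_n = u \in X_n \cap W = W_n$.

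Finally, to see that DF fails, note first that $\widetilde{v}_n \in \widetilde{X}_n$ is $O$-orthogonal to $W_n = \widetilde{X}_n \cap W$: indeed $\langle w^{(n)}, w'\rangle = 0$ for $w' \in W_n$ by construction, while $\langle v^{(n)}, w'\rangle = 0$ because $v^{(n)} \in V \subseteq \overline V = W^\perp$. Hence $\widetilde{v}_n$ lies in the $V$-part of $\widetilde{X}_n$. On the other hand $|\widetilde{v}_n|^2 = 1 + \epsilon_n^2 |v^{(n)}|^2 \geq 1$, whereas $a(\widetilde{v}_n, \widetilde{v}_n) = \epsilon_n^2 a(v^{(n)}, v^{(n)}) = \epsilon_n^2 \to 0$ since $w^{(n)} \in W = \ker a$. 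Thus the ratio $|\widetilde{v}_n|^2 / a(\widetilde{v}_n,\widetilde{v}_n)$ is unbounded along the discrete $V$-spaces of $(\widetilde{X}_n)$, so no uniform constant can occur in DF.

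The one subtle point — and the single place where the construction could go wrong — is the identity $\widetilde{X}_n \cap W = W_n$ used in the ADK step: a priori the added ``near-$W$'' direction could enlarge the discrete kernel and thereby still destroy the pointwise approximation of $W$. What prevents this is precisely that the perturbation is carried out along $v^{(n)} \notin P(X_n)$; the weaker requirement $v^{(n)} \notin X_n$ would not suffice, because $P$ does not map $X_n$ into itself. Everything else is routine verification.
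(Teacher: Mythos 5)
Your construction is correct, but it departs from the paper's. The paper perturbs $X_n$ without changing its dimension: it applies the rank-one modification
\begin{equation}
Z_n u = u - \epsilon_n^{-1}\langle u, v_n\rangle / |v_n|\; w_n,
\end{equation}
with $v_n\in V_n$ of unit $X$-norm converging weakly to $0$, $w_n\in W$ a unit vector $O$-orthogonal to $X_n$, and $\epsilon_n = o(|v_n|)$; then $\tilde X_n = Z_n X_n$ is a shear of $X_n$ toward $W$ in which $\tilde W_n = W_n$ and $\tilde V_n = Z_n V_n$, and the element $Z_n v_n$ destroys DF. You instead \emph{augment} $X_n$ by one vector $\tilde v_n = w^{(n)} + \epsilon_n v^{(n)}$ with $w^{(n)}\in W\cap W_n^\perp$ unit and $v^{(n)}\in V\setminus P(X_n)$ $a$-normalized. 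Both routes work, and they buy different things. Yours makes AS completely trivial because $X_n\subseteq\tilde X_n$, whereas the paper's shear requires a (tacit) argument that the displacement in the $w_n$ direction does not wreck approximation of a fixed $u$; your analysis of the discrete kernel is also very clean, since $\tilde W_n = W_n$ follows directly from $v^{(n)}\notin P(X_n)$, and you correctly flag that $v^{(n)}\notin X_n$ alone would not be enough, precisely because $P$ does not preserve $X_n$. The paper's version is arguably closer to the literal statement (``perturbed along one sequence of directions'' rather than ``augmented by one vector,'' the phrasing used for the two subsequent propositions), and it has the structural feature that $\dim\tilde X_n = \dim X_n$; if you care about that, you could post-compose your construction by dropping a direction of $X_n$, but it is not needed for the statement's content. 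One small point: your justification that $V$ is infinite-dimensional (so that $v^{(n)}\notin P(X_n)$ exists) rests on the paper's standing description of the nonzero eigenvalues of $a$ forming a sequence diverging to infinity; the paper's own argument carries the same implicit assumption, so you are on equal footing there.
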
 
\begin{proof}
Let $X_n$ be spaces satisfying AS and ADK. Pick $v_n \in V_n$ with norm $1$ in $X$ but converging weakly to $0$ in $X$. Pick $w_n$ in $W$, orthogonal to $X_n$ and such that $|w_n|=1$. Choose a sequence $\epsilon_n>0$, converging to $0$ faster than $|v_n|$. That is, we impose $\epsilon_n = o(|v_n|)$. Define an operator $Z_n$ by:
\begin{equation}
Z_n u = u - \epsilon_n^{-1}\langle u , v_n \rangle/|v_n|\, w_n.
\end{equation}
It is injective on $X_n$. Put $\tilde X_n = Z_n X_n$. 
We claim that $\tilde X_n$ satisfies AS and ADK but not DF.

Remark that for $u,u' \in X_n$:
\begin{equation}
a(Z_n u, Z_n u') = a(u,u'),
\end{equation}
and that $\tilde X_n$ splits according to (\ref{eq:split}) as follows:
\begin{equation}
\tilde W_n = W_n \textrm{ and } \tilde V_n = Z_n V_n.
\end{equation}
Then the spaces $\tilde X_n$ satisfy AS and ADK. Remark that:
\begin{equation}
a(Z_n v_n, Z_n v_n) = a(v_n,v_n) \cleq 1 \quad \textrm{but} \quad |Z_nv_n|^2 = |v_n|^2 (1 + \epsilon_n^{-2}) \to \infty,
\end{equation}
so that DF does not hold.
\end{proof}

In \cite{BofBreGas00} finite element spaces satisfying DF but not DC are discussed. Here is an abstract variant.

\begin{proposition}\label{prop:dcnotvg} Spaces satisfying AS and DF can be augmented by one vector so as to still satisfy AS and DF but not DC.
\end{proposition}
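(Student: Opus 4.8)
The plan is to start from spaces $X_n$ satisfying AS and DF (hence ADK, by Proposition~\ref{prop:dfadk}), and to enrich each $X_n$ by a single vector built so that the resulting discrete space acquires a spurious ``nearly harmonic'' direction, i.e.\ an element of $\tilde V_n$ that is close to $W$ in the $O$-norm while keeping its $X$-norm under control. Concretely, I would fix a sequence $w_n \in W$ with $|w_n| = 1$, $w_n \rightharpoonup 0$ weakly in $O$ (possible since $W$ is infinite-dimensional), together with a sequence $v_n \in V_n$ that is bounded in $X$ and bounded below in $O$ (e.g.\ of norm $1$ in $O$; the fact that $V_n$ is nonzero for large $n$ and that DF gives a two-sided norm equivalence on $V_n$ makes this available). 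Then set $\tilde X_n = X_n \oplus \mathrm{span}\{ z_n \}$ with $z_n = w_n + \eta_n v_n$ for a suitable scalar sequence $\eta_n \to 0$ to be chosen.

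The verification splits into three points. \emph{AS for $\tilde X_n$} is immediate since $\tilde X_n \supseteq X_n$. \emph{DF for $\tilde X_n$}: one must identify $\tilde W_n$ and $\tilde V_n$ and check the Friedrichs constant stays bounded. Here $\tilde W_n = \tilde X_n \cap W$; since $z_n \notin W$ (its $V_n$-component is nonzero) and $X_n \cap W = W_n$, one should arrange, by a rank count, that $\tilde W_n = W_n$ still, so the only new direction lies in $\tilde V_n$. Writing a general element of $\tilde X_n$ as $x + t z_n$ with $x \in X_n$, $t \in \bbR$, and projecting off $W_n$, the new ``$V$-part'' is spanned by $V_n$ together with $P[V_n]^\perp$-adjusted copies of $z_n$; the key computation is that $a(z_n, z_n) = \eta_n^2 a(v_n,v_n) \cleq \eta_n^2$ while the $O$-norm of the $W_n$-orthogonal part of $z_n$ is bounded below (it contains $w_n$, which is $O$-bounded below and asymptotically $W_n$-orthogonal by ADK). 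Choosing $\eta_n$ going to zero slowly enough that DF survives — one needs $a \gtrsim |\cdot|^2$ on $\tilde V_n$ uniformly, which with a careful estimate of cross terms forces $\eta_n$ not too small relative to the interaction between $z_n$ and $V_n$ — but fast enough that the next point fails, is the crux.

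\emph{Failure of DC for $\tilde X_n$}: the natural spurious sequence is $\tilde v_n = z_n - (\text{its } \tilde W_n\text{-component})$, normalized. Its $X$-norm is bounded (the dangerous term $a(\tilde v_n,\tilde v_n)$ is $O(\eta_n^2)$), but $\tilde v_n$ is close in $O$ to $w_n/|w_n|$, which converges weakly to $0$ and has norm $1$, so no subsequence of $\tilde v_n$ can converge strongly in $O$ to an element of $V$ — indeed the only candidate weak limit is $0 \in V$, but $|\tilde v_n|$ stays bounded below. That contradicts DC. I expect the main obstacle to be the simultaneous calibration in the second and third points: one must choose $v_n$, $w_n$ and the scalar $\eta_n$ so that the perturbation is visible to DC (keeps a bounded-below $O$-mass pointing in the $W$-direction while staying $X$-bounded) yet invisible to DF (the new $V$-direction still satisfies a uniform Friedrichs inequality). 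Handling the cross terms $\langle v_n, w_n\rangle$ and $a(v_n, \cdot)$ cleanly — possibly by first replacing $w_n$ by its $X_n^\perp$-projection, or by choosing $w_n$ orthogonal to $X_n$ in $O$ from the outset — is the technical heart; everything else is the bookkeeping of the $V_n \oplus W_n$ splitting for a rank-one extension.
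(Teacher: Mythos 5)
Your construction cannot work: the new vector does not produce a new direction in $\tilde V_n$. You set $z_n = w_n + \eta_n v_n$ with $v_n \in V_n \subset X_n$, so $z_n - w_n = \eta_n v_n \in X_n$ and hence
\begin{equation}
\tilde X_n = X_n \oplus \mathrm{span}\{z_n\} = X_n + \bbR w_n .
\end{equation}
Since $w_n \in W$, the added vector lies entirely in $W$ modulo $X_n$. If (as you yourself suggest) $w_n$ is taken orthogonal to $X_n$ in $O$, then $\tilde W_n = W_n \oplus \bbR w_n$ and $\tilde V_n = V_n$: the ``$V$-side'' is untouched, so DC for $\tilde X_n$ is exactly DC for $X_n$. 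Your claimed spurious element, $\tilde v_n = z_n - (\text{its } \tilde W_n\text{-component})$, equals $\eta_n v_n$ — an element of the original $V_n$ whose $O$-norm tends to $0$ — not something ``close in $O$ to $w_n$'' as you assert. Even without the orthogonality normalization on $w_n$, one still has $w_n \in \tilde X_n \cap W = \tilde W_n$, and $\tilde V_n$ is merely a $W$-perturbed copy of $V_n$ of the same dimension, so no genuinely new, $O$-bounded-below, weakly-null direction appears. The ``cross terms'' you flag are not the issue; the issue is that adding a vector supported on $X_n + W$ cannot break DC.

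The correct construction has to inject a new vector whose \emph{$V$-component lies outside} $X_n$. The paper takes $v_n \in V$ (not $V_n$) with $\|v_n\|=1$ and orthogonal to $V_n$ in both $O$ and $X$, together with $w_n \in W$, $|w_n|=1$, orthogonal to $W_n$ in $O$, and sets $u_n = v_n + w_n$, $\tilde X_n = X_n \oplus \bbR u_n$. Then $\tilde W_n = W_n$ and $\tilde V_n = V_n \oplus \bbR u_n$: the new direction really does land in $\tilde V_n$. DF survives because $a(u_n,u_n) = a(v_n,v_n)$ is bounded below by the continuous Friedrichs inequality on $V$ (since $\|v_n\|=1$), while DC fails because $u_n \rightharpoonup 0$ weakly in $X$ (using AS/ADK for the orthogonality to $X_n$) yet $|u_n|^2 \ge |w_n|^2 = 1$. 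In short: you perturbed inside the kernel direction; you need to perturb transversally to $X_n$ in the $V$-direction, using the continuous $V$, not the discrete $V_n$.
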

\begin{proof}
Suppose spaces $X_n$ satisfy DF. Pick a sequence $v_n \in V$ such that $\|v_n\| = 1$ and $v_n$ is orthogonal to $V_n$ in $O$ and $X$. Pick also a sequence $w_n\in W$ such that $|w_n|=1$ and $w_n$ is orthogonal to $W_n$ in $O$. Put now $u_n = v_n + w_n$ and set:
\begin{equation}
\tilde X_n = X_n \oplus \bbR u_n.
\end{equation}
We claim that $\tilde X_n$ satisfies DF but not DC. Indeed $\tilde X_n$ splits as follows:
\begin{equation}
\tilde W_n  = W_n \textrm{ and } \tilde V_n = V_n\oplus \bbR u_n, 
\end{equation}
and DF holds. The sequence $(u_n)$ converges to $0$ weakly in $X$ but not strongly in $O$.
\end{proof}

The following shows that TCO, while sufficient, is not necessary for eigenvalue convergence.
\begin{proposition}
Spaces satisfying AS and VG can be augmented by one vector so as to still satisfy AS and VG but not ODF.
\end{proposition}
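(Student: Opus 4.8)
The plan is to mimic the construction in Proposition~\ref{prop:dcnotvg}, augmenting by a single vector, but this time starting from spaces that already satisfy VG and choosing the perturbation so as to spoil the ODF inequality $|v| \leq C|\myP v|$ on $V_n$ without destroying VG. The obstruction to ODF we want to manufacture is a sequence of unit vectors $u_n$ in $\tilde V_n$ whose $O$-norm stays bounded below while $|\myP u_n| \to 0$; by Proposition~\ref{prop:odfgap} this is the same as making $\delta_O(\tilde V_n, \overline V)$ tend to $1$. On the other hand, VG is measured in the norm of $X$, so if $u_n$ is small in the $X$-norm then adding $\bbR u_n$ to $V_n$ will not move $\delta(\tilde V_n, V)$ much. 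The tension to exploit is exactly that a vector can be $O$-large relative to its $X$-norm only by having a large $a$-part, i.e. by being far from $W$ in the $X$-sense — but we do not need that; instead we want it to be $O$-large relative to its $\myP$-component, which just means it is close to $W$ in the $O$-sense.

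Concretely, I would start with spaces $X_n$ satisfying AS and VG (which exist, e.g. the $P_n$ are ANCO, or one can take any concrete example). Using that $W$ is infinite-dimensional and Proposition~\ref{prop:asv}, pick for each $n$ a unit vector $w_n \in W$ that is $O$-orthogonal to $W_n$, and pick $v_n \in V$ with $v_n$ $O$-orthogonal and $X$-orthogonal to $V_n$, normalized so that $\|v_n\|_X$ is \emph{small}, say $\|v_n\| = \eta_n$ with $\eta_n \to 0$ but $\eta_n \neq 0$. Set $u_n = v_n + \eta_n w_n$ and $\tilde X_n = X_n \oplus \bbR u_n$. As in Proposition~\ref{prop:dcnotvg} one checks the splitting: $\tilde W_n = W_n$ (since $u_n \notin W$, as its $V$-component $v_n$ is nonzero) and $\tilde V_n = V_n \oplus \bbR u_n$, because $u_n$ is $O$-orthogonal to $W_n$ (both $v_n$ and $w_n$ are). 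AS for $\tilde X_n$ is immediate since $\tilde X_n \supseteq X_n$.

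For VG: given $v \in \tilde V_n$, write $v = v' + t u_n$ with $v' \in V_n$. Then $Pv' \in V$ already, and $Pu_n = v_n \in V$, so a good approximant to $v$ in $V$ is $Pv' + t v_n$, giving $\|v - (Pv' + tv_n)\| = \|v' - Pv'\| \leq \delta(V_n,V)\|v'\|$; combined with $\|v'\| \leq \|v\| + |t|\,\|u_n\|$ and $\|u_n\| \leq \eta_n + \eta_n\|w_n\|_X \to 0$, plus a lower bound $\|v\| \gtrsim |t|$ (which holds because $u_n$ has a unit $W$-type component $\eta_n w_n$... here one must be slightly careful and instead bound using $|t|\,\|u_n\| \leq \|v' \| + \|v\| \cdot(\text{const})$ via orthogonality of $v'$ and $tu_n$ in a suitable sense), one concludes $\delta(\tilde V_n, V) \leq \delta(V_n,V) + o(1) \to 0$. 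The cleanest route is to note that in the $X$-inner product $v'$ and $u_n$ need not be orthogonal, so I would instead observe directly: $\delta(\tilde V_n,V) \le \delta(V_n,V)+\|u_n\|\cdot(\text{a bounded factor})$, since perturbing a subspace by a vector of small norm changes its gap by $O$ of that norm. Finally, for the failure of ODF: $u_n \in \tilde V_n$ has $\myP u_n = \myP(v_n + \eta_n w_n) = v_n$ (as $\myP$ kills $W$ and fixes $V$), so $|\myP u_n| = |v_n| \le \|v_n\| = \eta_n$, while $|u_n|^2 = |v_n|^2 + \eta_n^2|w_n|^2 = |v_n|^2 + \eta_n^2$ — which is the wrong direction, since $|v_n|\le\eta_n$ too. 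So the naive choice fails, and the fix is to \emph{not} tie the size of $w_n$ to $\eta_n$: take instead $u_n = v_n + w_n$ with $|w_n| = 1$ and $\|v_n\| = \eta_n \to 0$; then $|\myP u_n| = |v_n| \le \eta_n \to 0$ while $|u_n| \ge |w_n| - |v_n| \ge 1 - \eta_n \to 1$, so $\delta_O(\tilde V_n,\overline V)\to 1$ and ODF fails by Proposition~\ref{prop:odfgap}; meanwhile $\|u_n\|_X \le \|v_n\|_X + \|w_n\|_X = \eta_n + 1$, which is \emph{not} small, so the VG argument above breaks.

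The main obstacle, therefore, is the genuine tension: VG (an $X$-norm statement) wants the added vector to be $X$-small, while the failure of ODF (an $O$-norm statement) wants the added vector to be $O$-large relative to its $\myP$-part, i.e. to have a comparatively large $W$-component in $O$ — and a large $W$-component forces a large $\|\cdot\|_X$ only if it also has a large $a$-part, which it need not. The resolution is that a vector $u_n$ with $\|u_n\|_X$ of order $1$ but with $v_n := \myP u_n$ of order $\eta_n \to 0$ in $X$ \emph{already} does the job for VG: adding $\bbR u_n$ changes $\delta(\tilde V_n, V)$ by at most a constant times $\|u_n - \myP u_n\|_X$? No — that too is order $1$. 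So the correct and subtle point, which is where the proof's real content lies, is that VG is stable under augmenting $V_n$ by a vector $u_n$ \emph{provided $\myP u_n$ is small in $X$ and $u_n - \myP u_n \in W$}: indeed for $v = v' + tu_n \in \tilde V_n$ the vector $Pv' + t\,\myP u_n \in V$ satisfies $v - (Pv' + t\,\myP u_n) = (v' - Pv') + t(u_n - \myP u_n)$ with the first term controlled by $\delta(V_n,V)$ and the \emph{second term lying in $W$}, hence $= t\,Q[W](u_n)$; but wait, elements of $V$ must be $O$-orthogonal to $W$, so $Pv' + t\myP u_n$ is indeed in $V$, and the residual $t(u_n - \myP u_n)\in W$ has $X$-norm $|t|\cdot\|u_n - \myP u_n\| \le |t|\cdot O(1)$ — still order one. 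Thus the honest statement is that one must also arrange $u_n - \myP u_n$ to be small in $X$-norm, i.e. $w_n$ small in $X$; but $w_n \in W$ small in $X$ means small in $O$ too (the two norms agree on $W$ since $a=0$ there), killing the ODF failure. \textbf{Hence the single-vector augmentation as literally stated cannot work with $\bbR u_n$ for a fixed $u_n$}, and the actual construction must be re-examined: the resolution the authors surely intend is to use $u_n = v_n + w_n$ with $|w_n|=1$, $\|v_n\|_X \to 0$, and prove VG not by the crude perturbation bound but by exploiting that $\tilde V_n$ differs from $V_n \oplus \bbR w_n$ only through the $X$-small correction $v_n$, together with the fact that $\delta(V_n \oplus \bbR w_n, V) $ can still be controlled because $w_n \in W$ is close to $W_n \subseteq$ (things $V$ approximates well) — more precisely, for $v = v' + tw_n$ one approximates by $Pv' \in V$, and $\|v - Pv'\|^2 = \|v' - Pv'\|^2 + t^2\|w_n\|^2$; here $\|w_n\|_X^2 = |w_n|^2 = 1$ does not vanish, so this still fails. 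I would therefore flag this as the crux to get right in the write-up: one likely needs to choose $w_n$ to lie \emph{not} in $W$ but asymptotically in $W$ — e.g. $w_n$ a unit vector in $X$ with $a(w_n,w_n) \to 0$ and $w_n$ $O$-orthogonal to $V_n$ — existing by AS and infinite-dimensionality of $W$; then $Pw_n \to 0$ in $X$, so for $v = v' + tw_n \in \tilde V_n = V_n \oplus \bbR w_n$ one approximates by $Pv' + tPw_n \in V$ with residual $(v'-Pv') + t(w_n - Pw_n)$, and $\|w_n - Pw_n\|_X \to 0$ gives $\delta(\tilde V_n, V) \to 0$, i.e.\ VG; meanwhile $\myP w_n = Pw_n \to 0$ in $X$ hence in $O$, while $|w_n| \ge \|w_n\|_X \cdot c$? no, $|w_n|\le\|w_n\|_X=1$, and in fact $|w_n|\to 1$ since $|w_n|^2 = \|w_n\|^2 - a(w_n,w_n) = 1 - o(1)$, so $|\myP w_n|/|w_n| \to 0$, contradicting ODF via Proposition~\ref{prop:odfgap}. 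This last choice — augment by a single vector $w_n$ that is a unit vector in $X$ with vanishing $a$-energy and $O$-orthogonal to $V_n$ — is the one I would carry out in full.
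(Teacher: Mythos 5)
Your final construction does not work, and the path you took to it led you past the actual mechanism. In the last paragraph you augment by a single vector $w_n$ with $\|w_n\|_X = 1$, $a(w_n,w_n)\to 0$, and orthogonal to $X_n$, and you claim $\|w_n - Pw_n\|_X \to 0$; this is false. The residual $w_n - Pw_n = (I-P)w_n$ lies in $W$, so its $X$-norm equals its $O$-norm, and
\begin{equation}
\|(I-P)w_n\|^2 = \|w_n\|^2 - \|Pw_n\|^2 = 1 - o(1) \to 1,
\end{equation}
since $a(Pw_n,Pw_n) = a(w_n,w_n) \to 0$ forces $\|Pw_n\|\to 0$ via the Friedrichs inequality on $V$. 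Taking $v = w_n$ in the gap gives $\delta(\tilde V_n, V) \geq \|w_n - Pw_n\|/\|w_n\| \to 1$, so VG \emph{fails} for this construction — the exact opposite of what you assert. A related error appears earlier: you claim that augmenting $V_n$ by a vector $u_n$ with $\|u_n\|_X$ small only perturbs the gap by $O(\|u_n\|)$, but the gap is a normalized quantity; $v = u_n$ contributes $\|u_n - Pu_n\|/\|u_n\|$, a ratio that need not be small just because $\|u_n\|$ is. (This is also why your very first attempt with $u_n = v_n + \eta_n w_n$, $\|v_n\| = \eta_n$, $|w_n|=1$, fails VG: there $\|u_n - Pu_n\|/\|u_n\| = \eta_n/(\sqrt{2}\,\eta_n) = 1/\sqrt 2$.) Finally, your declaration that ``the single-vector augmentation as literally stated cannot work'' is incorrect — it does, once the right scales are chosen.

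The decisive idea you never isolate is the decoupling, given by compactness of $V \to O$, between the $X$-norm and the $O$-norm of the $V$-part: one can pick $v_n \in V$ orthogonal to $X_n$ with $\|v_n\|_X = 1$ (not going to zero!) while $|v_n|_O \to 0$, because $v_n \rightharpoonup 0$ weakly in $X$ forces $|v_n| \to 0$. Then take $w_n \in W$ orthogonal to $X_n$ at the \emph{intermediate} scale $|w_n| = |v_n|^{1/2}$, which is small in absolute terms (so $\|u_n - Pu_n\|_X = \|w_n\| = |v_n|^{1/2} \to 0$ while $\|u_n\| \geq \|v_n\| = 1$, preserving VG) yet large relative to $|v_n|$ (so $|Pu_n|/|u_n| = |v_n|/(|v_n|^2 + |v_n|)^{1/2} \to 0$, breaking ODF by Proposition~\ref{prop:odfgap}). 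Set $u_n = v_n + w_n$ and $\tilde X_n = X_n \oplus \bbR u_n$; the splitting $\tilde W_n = W_n$, $\tilde V_n = V_n \oplus \bbR u_n$ checks exactly as in your Proposition~\ref{prop:dcnotvg} analogue. The single vector thus carries two decoupled pieces at cleverly related scales — this is the content your proposal is missing.
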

\begin{proof}
Let spaces $X_n$ satisfy VG. Choose $v_n$ in $V$ orthogonal to $X_n$ such that $\|v_n\|=1$. Then $v_n$ converges weakly to $0$ in $X$ and strongly in $O$. Choose also $w_n$ in $W$ orthogonal to $X_n$ and such that $|w_n|= |v_n|^{1/2}$.

Now put $u_n = v_n + w_n$ and set:
\begin{equation}
\tilde X_n = X_n \oplus \bbR u_n.
\end{equation}

As before, $\tilde X_n$ splits according to (\ref{eq:split}) as follows:
\begin{equation}
\tilde W_n  = W_n \myand \tilde V_n = V_n\oplus \bbR u_n, 
\end{equation}
We have:
\begin{equation}
|u_n - P u_n| = |w_n| \to 0 \myand \|u_n\| \geq \|v_n\| = 1, 
\end{equation}
hence VG still holds.

But we also have:
\begin{equation}
|Pu_n|/|u_n| = |v_n|/(|v_n|^2 + |w_n|^2)^{1/2} = 1/ (1 + 1/|v_n|)^{1/2} \to 0.
\end{equation}
so ODF does not hold.
\end{proof}

\section{Appendix}
A variant of the following Lemma was already used in \cite{Kol77}. We temporarily forget about
the previous notations.
\begin{lemma}\label{lem:pointnorm}
Let $X$, $Y$ and  $Z$ be three Hilbert spaces, $K: X \to Y$ compact,
$A: Y \to Z$ bounded and $A_n: Y \to Z$ a uniformly bounded sequence of
operators such that:
\begin{equation}
\forall u \in X \quad \|AKu - A_n K u\|_{Z} \to 0.
\end{equation}
Then:
\begin{equation}
\| AK - A_n K\|_{X \to Z} \to 0 .
\end{equation}
\end{lemma}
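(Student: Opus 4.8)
The statement is a standard "pointwise convergence plus precompactness implies norm convergence" result, and the plan is to exploit the compactness of $K$ to reduce uniform convergence on the unit ball of $X$ to convergence on a compact subset of $Y$. First I would introduce the operators $B_n = A - A_n : Y \to Z$, which are uniformly bounded by hypothesis, say $\|B_n\|_{Y \to Z} \leq M$ for all $n$, and for which we know $\|B_n K u\|_Z \to 0$ for every fixed $u \in X$. The goal is to show $\sup_{\|u\|_X \leq 1} \|B_n K u\|_Z \to 0$.

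The key step is to pass to the compact set $\calK = \overline{K(B_X)}$, where $B_X$ is the closed unit ball of $X$; since $K$ is compact, $\calK$ is a compact subset of $Y$. I claim $\|B_n\|_{X \to Z} = \sup_{y \in \calK}\|B_n y\|_Z$ (using that $B_n$ is continuous, so the sup over $K(B_X)$ equals the sup over its closure). Now fix $\varepsilon > 0$. By compactness, cover $\calK$ by finitely many balls of radius $\varepsilon/(3M)$ centered at points $y_1, \dots, y_J \in \calK$. Each $y_j$ is a limit of points of the form $K u$, so by continuity of each $B_n$ and a further $\varepsilon$-adjustment — or more cleanly, by choosing the $y_j$ themselves to lie in $K(B_X)$ rather than its closure, which is harmless since $K(B_X)$ is dense in $\calK$ — we may assume $y_j = K u_j$ for some $u_j \in B_X$. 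By the pointwise hypothesis, choose $N$ so that $\|B_n K u_j\|_Z \leq \varepsilon/3$ for all $n \geq N$ and all $j = 1, \dots, J$ (finitely many conditions). Then for any $y \in \calK$, pick $j$ with $\|y - y_j\|_Y \leq \varepsilon/(3M)$ and estimate, for $n \geq N$,
\begin{equation}
\|B_n y\|_Z \leq \|B_n(y - y_j)\|_Z + \|B_n y_j\|_Z \leq M \cdot \frac{\varepsilon}{3M} + \frac{\varepsilon}{3} < \varepsilon,
\end{equation}
and actually, accounting for the approximation of $y_j$ by an element of $K(B_X)$, one gets $\|B_n y\|_Z \leq \varepsilon$; taking the sup over $y \in \calK$ gives $\|B_n\|_{X \to Z} \leq \varepsilon$ for $n \geq N$.

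I do not expect a serious obstacle here; the only mild subtlety is the interplay between $\calK = \overline{K(B_X)}$ and $K(B_X)$ itself, which is resolved by noting that $K(B_X)$ is dense in its closure, so the finitely many centers $y_1, \dots, y_J$ can be taken in $K(B_X)$ with a negligible worsening of the covering radius (say radius $\varepsilon/(2M)$ instead of $\varepsilon/(3M)$). An alternative, perhaps even cleaner, route avoids nets entirely: suppose for contradiction that $\|B_n K\|_{X \to Z} \not\to 0$; then along a subsequence there are $u_k \in B_X$ with $\|B_{n_k} K u_k\|_Z \geq c > 0$; by compactness of $K$, pass to a further subsequence so that $K u_k \to y$ in $Y$; then $\|B_{n_k} K u_k\|_Z \leq \|B_{n_k}(K u_k - y)\|_Z + \|B_{n_k} y\|_Z \leq M\|K u_k - y\|_Y + \|B_{n_k} y\|_Z$, and the first term tends to $0$ by construction while the second tends to $0$ by the pointwise hypothesis applied to any preimage of $y$ (or, if $y \notin K(X)$, by approximating $y$ with elements $K v$ and using the uniform bound $M$), contradicting $c > 0$. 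I would present this contradiction argument, as it is the shortest.
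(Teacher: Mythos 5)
The paper does not supply a proof of Lemma~\ref{lem:pointnorm}; it only remarks that a variant was used in a reference and notes the typical application via the uniform boundedness principle. So there is no paper argument to compare against. Both versions of your proof are correct. Setting $B_n = A - A_n$, the first version is the standard Arzel\`a--Ascoli-type argument: uniform boundedness of $B_n$ gives equicontinuity, compactness of $K$ gives a totally bounded image $\overline{K(B_X)}$, and a finite $\varepsilon$-net of centers in $K(B_X)$ reduces uniform convergence to finitely many instances of pointwise convergence. Your handling of the subtlety that the limit set is $\overline{K(B_X)}$ rather than $K(B_X)$ is right: since $K(B_X)$ is dense in its closure, the centers can be taken in $K(B_X)$ at the cost of a harmless enlargement of the covering radius. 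The second (contradiction) version is also correct, and you correctly flag and resolve the one genuine pitfall, namely that the cluster point $y$ of $(Ku_k)$ need not lie in $K(X)$, so pointwise convergence does not apply directly; the $\varepsilon/2$-splitting using the uniform bound $M$ and density of $K(B_X)$ in $\overline{K(B_X)}$ closes that gap. One trivial slip: you write $\|B_n\|_{X\to Z}$ where $\|B_n K\|_{X\to Z}$ is meant, but the intent is clear from context.
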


For instance (due to the uniform boundedness principle)  this lemma can be applied if $(A_n)$ is a sequence of bounded operators $Y \to Z$ converging pointwise (in norm) to some $A$.

\section{\label{sec:ack} Acknowledgements}
The first author thanks Annalisa Buffa and Daniele Boffi for helpful discussions.

This work, conducted as part of the award ``Numerical analysis and simulations of
geometric wave equations'' made under the European Heads of Research Councils and European Science
Foundation EURYI (European Young Investigator) Awards scheme, was supported by funds from the
Participating Organizations of EURYI and the EC Sixth Framework Program.

\bibliography{../Bibliography/alexandria}{}
\bibliographystyle{plain}

\end{document}